\newcommand\dom{\textbf{Dom}}
\newcommand\tuple{\vec}
\newcommand {\parts}{\mathcal P}
\newcommand \all{\textit{All}}
\newcommand \nonempty{\textit{NE}}
\newcommand{\FO}{\textbf{FO}}
\newcommand{\DD}{\mathcal D}
\newcommand{\Dep}{\mathbf D}
\newcommand{\model}{\mathfrak M}
\newcommand{\startteam}{\{\emptyset\}}
\begin{document}

\title*{On Strongly First-Order Dependencies}
\author{Pietro Galliani}
\institute{Pietro Galliani \at Clausthal University of Technology, Am Regenbogen 15
38678 Clausthal Zellerfeld, Germany, \email{pgallian@gmail.com}}
%
%
\maketitle

\abstract{We prove that the expressive power of first-order logic with team semantics plus contradictory negation does not rise beyond that of first-order logic (with respect to sentences), and that the totality atoms of arity $k+1$ are not definable in terms of the totality atoms of arity $k$. We furthermore prove that all first-order nullary and unary dependencies are strongly first order, in the sense that they do not increase the expressive power of first order logic if added to it.}
\section{Introduction}
\label{sec:intro}
In the last few years, team semantics \cite{hodges97, vaananen07} has proved itself to be a very powerful theoretical framework for the study of dependency notions and their interaction; and, furthermore, some intriguing potential applications of team semantics in the areas of  belief representation \cite{galliani12c, galliani13f}, social choice and physics \cite{abramsky13} and database theory \cite{kontinen13} have been noticed.

As a natural generalization of Tarski's semantics to the case of multiple assignments, team semantics allows to extend first-order logic in novel ways, in particular by adding to it \emph{dependency atoms} that specify complex patterns of dependence and independence between variables; and much of the research in the area so far has been dedicated to the comparison of the logics thus obtained. 

Many of these logics are much stronger than first-order logic itself -- for instance, dependence logic is as expressive as the existential fragment of second-order logic \cite{vaananen07}, and inclusion logic is as expressive as greatest fixed point logic \cite{gallhella13} -- but this needs not be the case. Indeed, as shown in \cite{galliani13e}, many nontrivial dependency notions, such as for instance the negations of functional dependence, inclusion, exclusion, and conditional independence, are \emph{strongly first-order} in the sense that they do not increase the expressive power of first-order logic if added to it. The \emph{totality atoms}, which assert that a certain tuple of variables takes all possible values in a team, are an especially interesting example of a strongly first-order dependency, and in this work we will study them in some depth.

It is important to emphasize here that these strongly first-order dependencies, despite not increasing the expressive power of first-order logic sentences, cannot be disposed of: even though every sentence containing them (but not other, stronger dependencies) is logically equivalent to some first-order sentence, the satisfaction conditions of \emph{formulas} containing them are not in general equivalent to the satisfaction conditions of any first-order formula with respect to team semantics. This disparity between the behaviour of formulas and that of sentences is one of the most intriguing phenomena of team semantics.

The study of team semantics (and, in particular, of strongly first-order dependencies) can thus be seen as an attempt to investigate the nature of the \emph{boundary} between first- and second-order logic; and, from a more practical point of view, dependencies which are strongly first-order are eminently treatable in that they do not increase the complexity of the logic. 

The purpose of this work is to further investigate the properties of strongly first-order dependencies and -- more in general -- of team semantics-based extensions of first-order logic whose expressive power is no greater than that of first-order logic proper. In Section \ref{sec:neg} we will investigate the effect of adding the contradictory negation operator to extensions of first-order logic by strongly first-order operator; then in Section \ref{sec:total} we will develop a \emph{hierarchy theorem} for totality atoms, and in Sections \ref{sec:zerary} and \ref{sec:unary} we will study dependency atoms of arity $0$ or $1$.
\section{Preliminaries}
\label{sec:prelim}
In this section we will briefly recall some fundamental definitions, as well as some results that we will need to use later in this work.

\begin{definition}[Team]
Let $\model$ be a first order model with domain $M$ and let $V$ be a set of variables. A \emph{team} $X$ over $\model$ with domain $\dom(X) = V$ is a set of assignments $s : V \rightarrow M$. 

Given such a team $X$ and a tuple $\tuple v$ of variables in $\dom(X)$, we write $X(\tuple v)$ for the relation $\{s(\tuple v) : s \in X\}$; and given a first-order formula $\theta$, we write $(X\upharpoonright \theta)$ for the team $\{s \in X : \model \models_s \theta\}$ obtained by taking only the assignments of $X$ which satisfy $\theta$ (according to Tarski's semantics).
\end{definition}

For the purposes of this work, we will only consider the so-called \emph{lax} version of team semantics, and we will only work with formula in negation normal form:
\begin{definition}
Let $\model$ be a first order model, let $X$ be a team over it, and let $\phi(\tuple v)$ be a first order formula in negation normal form and with free variables in $\tuple v \subseteq \dom(X)$. We say that $X$ \emph{satisfies} $\phi(\tuple v)$ in $\model$, and we write $M \models_X \phi(\tuple v)$, if and only if this can be deduced from the following rules: 
\begin{description}
\item[\textbf{TS-lit:}] For all first-order literals $\alpha$, $\model \models_X \alpha$ if and only if for all $s \in X$, $\model \models_s \alpha$ according to Tarski semantics;
\item[\textbf{TS-$\vee$:}] $\model \models_X \psi \vee \theta$ if and only if there exist $Y, Z \subseteq X$ such that $X = Y \cup Z$, $\model \models_Y \psi$ and $\model \models_Z \theta$; 
\item[\textbf{TS-$\wedge$:}] $\model \models_X \psi \wedge \theta$ if and only if $\model \models_X \psi$ and $\model \models_X \theta$; 
\item[\textbf{TS-$\exists$:}] $\model \models_X \exists v \psi$ if and only if there exists a function $F: X \rightarrow \parts(M) \backslash \{\emptyset\}$ such that, for $Y = X[F/v] = \{s[m/v] : m \in F(s)\}$, we have that $\model \models_Y \psi$; 
\item[\textbf{TS-$\forall$:}] $\model \models_X \forall v \psi$ if and only if $M \models_{X[M/v]} \psi$, where $X[M/v] = \{s[m/v] : s \in X, m \in M\}$.
\end{description}
A sentence $\phi$ is said to be \emph{true} in a model $\model$ if and only if $\model \models_{\startteam} \phi$; and in this case, we write $\model \models \phi$.
\end{definition}

The next result shows that, in the case of first-order logic, team semantics may indeed be reduced to Tarski's semantics: 

\begin{proposition}[\cite{vaananen07}]
\label{propo:flat}
For all first-order formulas $\phi$, all models $\model$ and all teams $X$, $\model \models_X \phi$ if and only if for all $s \in X$ we have that $\model \models_s \phi$ according to Tarski's semantics. In particular, for all first-order sentences $\phi$ we have that $\model \models_{\startteam} \phi$ if and only if $\model \models \phi$ according to Tarski's semantics.
\end{proposition}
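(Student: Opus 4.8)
The plan is to prove the first claim by a straightforward structural induction on the first-order formula $\phi$, which by hypothesis is in negation normal form, and then to obtain the statement about sentences as an immediate corollary by instantiating $X$ as the team $\startteam$ consisting of the empty assignment alone. In the base case $\phi$ is a literal, and the equivalence is exactly the rule \textbf{TS-lit}. For a conjunction $\phi = \psi \wedge \theta$, the rule \textbf{TS-$\wedge$} together with the induction hypothesis immediately yields that $\model \models_X \psi \wedge \theta$ iff every $s \in X$ satisfies both $\psi$ and $\theta$ in Tarski's sense, which is the desired condition. For a universal quantification $\phi = \forall v\, \psi$, the rule \textbf{TS-$\forall$} gives $\model \models_X \forall v\,\psi$ iff $\model \models_{X[M/v]} \psi$, which by the induction hypothesis holds iff every $s[m/v]$ with $s \in X$ and $m \in M$ satisfies $\psi$ in Tarski's sense, and this is precisely the condition that every $s \in X$ satisfy $\forall v\,\psi$.

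The two cases needing slightly more care are disjunction and existential quantification, where the two directions of the equivalence are witnessed differently. For $\phi = \psi \vee \theta$: from left to right, if $X = Y \cup Z$ with $\model \models_Y \psi$ and $\model \models_Z \theta$, then by the induction hypothesis every assignment of $Y$ satisfies $\psi$ and every assignment of $Z$ satisfies $\theta$, so every $s \in X$ satisfies $\psi \vee \theta$; conversely, if every $s \in X$ satisfies $\psi \vee \theta$ in Tarski's sense, one splits $X$ as $Y = (X \upharpoonright \psi)$ and $Z = (X \upharpoonright \theta)$, notes that $X = Y \cup Z$, and applies the induction hypothesis to get $\model \models_Y \psi$ and $\model \models_Z \theta$. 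For $\phi = \exists v\,\psi$: from left to right, a witnessing function $F : X \to \parts(M) \setminus \{\emptyset\}$ with $\model \models_{X[F/v]} \psi$ gives, for each $s \in X$ and each $m \in F(s)$ (which exists since $F(s) \neq \emptyset$), that $s[m/v]$ satisfies $\psi$, hence $s$ satisfies $\exists v\,\psi$; conversely, setting $F(s) = \{ m \in M : \model \models_{s[m/v]} \psi \}$ defines a legitimate, everywhere-nonempty witnessing function exactly because each $s \in X$ satisfies $\exists v\,\psi$, and then $\model \models_{X[F/v]} \psi$ by the induction hypothesis.

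There is no genuine obstacle in this argument; the only points deserving attention are the specific choices of the splitting teams and of the witnessing function made in the backward directions above, and the observation that the empty team vacuously satisfies every first-order formula, consistently with the condition ``every $s$ in the empty team satisfies $\phi$'' being vacuously true, so that the induction goes through uniformly without a separate base case for $X = \emptyset$. Finally, the ``in particular'' clause follows by taking $X = \startteam$: then $\model \models_{\startteam} \phi$ holds iff the unique (empty) assignment $\emptyset \in \startteam$ satisfies $\phi$ according to Tarski's semantics, i.e.\ iff $\model \models \phi$.
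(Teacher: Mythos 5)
Your proof is correct: the structural induction, with the splitting $Y = (X\upharpoonright\psi)$, $Z = (X\upharpoonright\theta)$ for disjunction and the maximal choice function $F(s) = \{m : \model \models_{s[m/v]} \psi\}$ for the existential case, is exactly the standard argument for this flatness property, which the paper itself does not reprove but cites from \cite{vaananen07}. No gaps; the remarks about the empty team and the instantiation $X = \startteam$ for the sentence case are handled properly.
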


However, team semantics allows us to extend first-order logic in novel ways, for instance by operators such as the \emph{intuitionistic implication} \cite{abramsky09}
\begin{description}
\item[\textbf{TS-intimp:}] $\model \models_X \phi \rightarrow \psi$ if and only if for all $Y \subseteq X$, $\model \models_Y \phi \Rightarrow \model \models_Y \psi$, 
\end{description}
the \emph{contradictory negation} \cite{vaananen07b}
\begin{description}
\item[\textbf{TS-$\sim$:}] $\model \models_X \sim \phi$ if and only if $\model \not \models_X \phi$, 
\end{description}
the \emph{classical disjunction} \cite{vaananen07}
\begin{description}
\item[\textbf{TS-$\sqcup$:}] $\model \models_X \phi \sqcup \psi$ if and only if $\model \models_X \phi$ or $\model \models_X \psi$,
\end{description}
or the \emph{possibility operator} \cite{galliani13e}
\begin{description}
\item[\textbf{TS-$\Diamond$:}] $\model \models_X \Diamond \phi$ iff there exists a $Y \subseteq X$, $Y \not = \emptyset$ s.t. $\model \models_Y \phi$
\end{description}
or by means of novel atoms corresponding to notions of constancy and functional dependence \cite{vaananen07}
\begin{description}
\item[\textbf{TS-con:}] $\model \models_X =\!\!(\tuple v)$ iff for all $s, s' \in X$, $s(\tuple v) = s'(\tuple v)$;
\item[\textbf{TS-fdep:}] $\model \models_X =\!\!(\tuple v, \tuple w)$ iff for all $s, s' \in X$, $s(\tuple v) = s'(\tuple v) \Rightarrow s(\tuple w) = s'(\tuple w)$, 
\end{description}
inclusion dependence \cite{galliani12}
\begin{description}
\item[\textbf{TS-inc:}] $\model \models_X \tuple v \subseteq \tuple w$ iff $X(\tuple v) \subseteq X(\tuple w)$
\end{description}
(conditional) independence \cite{gradel13}
\begin{description}
\item[\textbf{TS-ind:}] $\model \models_X \tuple v ~\bot_{\tuple u}~ \tuple w$ iff for all $s, s' \in X$ with $s(\tuple u) = s'(\tuple u)$ there exists a $s'' \in X$ with $s''(\tuple u \tuple v \tuple w) = s(\tuple u \tuple v)s'(\tuple w)$.
\end{description}
or totality \cite{abramsky13}:
\begin{description}
\item[\textbf{TS-all:}] $\model \models_X \all(\tuple v)$ iff $X(\tuple v) = M^{|\tuple v|}$.
\end{description}

More in general, all these atoms (and many more besides) can be seen as special cases of the following definition (\cite{kuusisto13}):

\begin{definition}[Dependency Notion]
Let $k \in \mathbb N$. A $k$-ary dependency notion $\Dep$ is a class, closed under isomorphisms, of models over the signature $\{R\}$, where $R$ is a $k$-ary relation symbol. For all models $\model$, all teams $X$, and all tuples $\tuple v$ of variables in the domain of $X$, 
\begin{equation*}
\model \models_X \Dep \tuple v \mbox{ if and only if } (M, X(\tuple v)) \in \Dep.
\end{equation*}
\end{definition}

Given a family $\DD$ of dependency notions, we will write $\FO(\DD)$ for the logic obtained by adding all $\mathbf D \in \DD$ to the language of first-order logic. We will indicate with $=\!\!(\cdot)$ the family of all constancy dependencies $=\!\!(\tuple v)$ of all arities, with $=\!\!(\cdot, \cdot)$ the family of all functional dependency atoms $=\!\!(\tuple v, \tuple w)$ of all arities, and with $\all$ the family of all totality atoms $\all (\tuple w)$ of all arities; and when necessary, we will indicate the arities as a subscript -- for instance, $=\!\!(\cdot)_1$ represents the unary constancy atoms $=\!\!(v)$ where $v$ is a single variable, and $=\!\!(\cdot, \cdot)_{2,2}$ represents the functional dependency atoms of the form $=\!\!(v_1 v_2, w_1 w_2)$. 

The following notion of \emph{definability} is of central importance for the study of team semantics: 
\begin{definition}[Definability]
Let $\Dep$ be a $k$-ary dependency notion and let $\DD$ be a class of dependency notions. Then we say that $\Dep$ is \emph{definable} through $\DD$ if there exists a formula $\theta(\tuple v) \in \FO(\DD)$ over the empty vocabulary, where $\tuple v = v_1 \ldots v_k$ is a tuple of $k$ distinct variables, such that 
\begin{equation*}
\model \models_X \Dep \tuple v \mbox{ if and only if } \model \models_X \theta(\tuple v)
\end{equation*}
for all models $\model$ and teams $X$ whose domain contains $\tuple v$.
\end{definition}

It is easy to see that $\FO(=\!\!(\cdot)) = \FO(=\!\!(\cdot)_1)$: indeed, for any $k$-tuple $\tuple v = v_1 \ldots v_k$ of variable it is trivial to check that $=\!\!(\tuple v) \equiv \bigwedge_{i=1}^k =\!\!(v_i)$, and hence $=\!\!(\cdot)_k$ is definable through $=\!\!(\cdot)_1$. On the other hand, in \cite{durand11} it was shown that
\begin{theorem}
For all $k \in \mathbb N$, $\FO(=\!\!(\cdot, \cdot)_{k,1}) \subsetneq \FO(=\!\!(\cdot, \cdot)_{k+1, 1})$,\footnote{To be more precise, this results holds if we are allowing models over all signatures. The case in which only models over the empty signature are considered is yet open.}
\end{theorem}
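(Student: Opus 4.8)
The inclusion $\FO(=\!\!(\cdot, \cdot)_{k,1}) \subseteq \FO(=\!\!(\cdot, \cdot)_{k+1,1})$ is immediate, since a $k$-ary functional dependency atom $=\!\!(v_1 \ldots v_k, w)$ is equivalent to the $(k{+}1)$-ary atom $=\!\!(v_1 \ldots v_k v_k, w)$ obtained by repeating a source variable; so the whole content of the statement is the \emph{strictness} of this inclusion. The plan is to characterize the sentences of $\FO(=\!\!(\cdot, \cdot)_{k,1})$ as a bounded-arity fragment of existential second-order logic $\ESO$, and then to import a known arity hierarchy theorem for $\ESO$.

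For the characterization, I would first prove a \emph{normal form lemma}: every $\FO(=\!\!(\cdot, \cdot)_{k,1})$-sentence is logically equivalent, over all structures, to an $\ESO$-sentence of the shape $\exists f_1 \cdots \exists f_m \, \forall \vec x \, \theta$, where $\theta$ is quantifier-free and each $f_i$ is a function symbol of arity at most $k$. The argument is the usual game/Skolem-function reading of team semantics: a team satisfies $\phi$ iff the existential player has a winning strategy, and such a strategy is a tuple of choice functions, one for each existential quantifier and one for each disjunction. A choice function for a quantifier $\exists w$ governed by a dependency atom $=\!\!(\vec u, w)$ with $|\vec u| \le k$ may be taken to depend only on the values of $\vec u$, hence has arity $\le k$; an existential quantifier not governed by any dependency atom contributes nothing beyond first-order logic by the flatness of $\FO$ (Proposition \ref{propo:flat}) and can be absorbed into $\theta$; and the "split" choices associated to disjunctions, after the standard rewriting that replaces $\psi_1 \vee \psi_2$ by $\exists u(\ldots)$, likewise only influence the truth value through a bounded-arity dependency atom. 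Tracking these reductions carefully is precisely what keeps every surviving function symbol of arity $\le k$. The converse inclusion — that every such bounded-arity $\ESO$-sentence lies in $\FO(=\!\!(\cdot, \cdot)_{k,1})$ — is routine: witness each value $f_i(\vec u)$ by an $\exists w$, assert $=\!\!(\vec u, w)$, and translate $\theta$ directly. Thus the $\FO(=\!\!(\cdot, \cdot)_{k,1})$-definable classes of structures are exactly the $\ESO$-definable ones using only function quantifiers of arity $\le k$.

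It then remains to separate, for each $k$, the $k$- from the $(k{+}1)$-function-arity fragment of $\ESO$. Here I would invoke Ajtai's arity hierarchy theorem for $\Sigma^1_1$: over a suitable signature (a single relation symbol of sufficiently large arity suffices, which is exactly why the statement is restricted to models over \emph{all} signatures rather than only the empty one) there is a $\Sigma^1_1$-sentence using a $(k{+}1)$-ary second-order quantifier not equivalent to any $\Sigma^1_1$-sentence with only $k$-ary ones; converting between relation and function quantifiers of the appropriate arities transfers this to the function-arity hierarchy, and hence, via the characterization above, yields a $\FO(=\!\!(\cdot, \cdot)_{k+1,1})$-sentence inequivalent to every $\FO(=\!\!(\cdot, \cdot)_{k,1})$-sentence. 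I expect the main obstacle to be the normal form lemma, and specifically the bookkeeping that guarantees the rewriting of disjunctions and the elimination of dummy existential quantifiers never forces a choice function to depend on more than $k$ values — this is where the hypothesis on the arity of the dependency atoms is actually consumed, and it is the technical heart of \cite{durand11}. A secondary subtlety is checking that Ajtai's hierarchy, usually phrased for relation quantifiers, genuinely delivers the separation at the level of function quantifiers of arity exactly $k$ versus $k+1$.
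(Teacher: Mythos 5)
The paper gives no proof of this theorem---it is imported from \cite{durand11}---and your plan (characterize $\FO(=\!\!(\cdot, \cdot)_{k,1})$ as the fragment $\ESO_f(k)$ of existential second-order logic in which only function symbols of arity at most $k$ are quantified, then separate consecutive levels of that hierarchy) is precisely the strategy of that reference, so the architecture is sound. As written, however, the proposal has two genuine gaps, both at the points you flag but whose difficulty you underestimate. First, the normal form lemma: the direction $\ESO_f(k) \subseteq \FO(=\!\!(\cdot, \cdot)_{k,1})$ is indeed routine, but your sketch of the converse does not go through as stated. An existential quantifier whose variable is not the dependent variable of any atom cannot be ``absorbed by flatness'': the subformula in its scope is in general not first-order (it contains further dependence atoms, and the quantified variable may occur on their left-hand sides), so Proposition \ref{propo:flat} does not apply; moreover the standard eliminations of disjunctions and of quantifier nesting (V\"a\"an\"anen's normal form) \emph{increase} the arity of the dependence atoms, which is exactly what must be avoided here. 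An arity-preserving induction has to be designed from scratch, and that is the bulk of \cite{durand11}.

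The second gap is the transfer from relational to functional arity, which is not ``secondary'' but essential. The graph of a $k$-ary function is a $(k+1)$-ary relation, so the naive encodings give only $\Sigma^1_1(k) \subseteq \ESO_f(k) \subseteq \Sigma^1_1(k+1)$; hence a property in $\Sigma^1_1(k+2) \setminus \Sigma^1_1(k+1)$ separates $\ESO_f(k)$ from $\ESO_f(k+2)$ only, and the relational form of Ajtai's hierarchy would yield merely $\FO(=\!\!(\cdot, \cdot)_{k,1}) \subsetneq \FO(=\!\!(\cdot, \cdot)_{k+2,1})$. To separate consecutive levels one must invoke Ajtai's theorem in its functional form: evenness of the cardinality of a $(k+1)$-ary relation $R$ is expressible with $(k+1)$-ary function quantifiers (guess a fixed-point-free involution of $R$) but not with second-order quantifiers, relational or functional, of arity at most $k$. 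This stronger statement is what Ajtai's proof actually delivers and what \cite{durand11} uses; it is also why the footnote restricts the claim to signatures containing a relation symbol of sufficiently large arity, as you correctly observe. With these two components supplied, your argument is the intended one.
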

in \cite{galliani13b} it was shown that a similar result holds for independence atoms, and in \cite{hannula14} it was shown that the same may be said in the case of inclusion atoms too.

What about totality dependencies? We will address this question in Section \ref{sec:total}.

All dependencies that we mentioned so far are \emph{first-order} in the following sense:
\begin{definition}[First-Order Dependency Notion]
A $k$-ary dependency notion $\Dep$ is first-order if and only if there exists a first order formula $\Dep^*$ on the signature $\{R\}$ (for $R$ $k$-ary) such that 
\begin{equation*}
\Dep = \{(M, R) : (M, R) \models \Dep^*\}.
\end{equation*}
\end{definition}
It is easy to see that if $\Dep$ is first-order then $\model \models_X \Dep \tuple v \Leftrightarrow (M, X(\tuple v)) \models \Dep^*$; but owing to the higher-order nature of team semantics (and in particular, to the second-order quantification implicit in its rules for disjunctions and existential quantifiers) it does not follow from this that these dependencies do not increase the expressive power of first-order logic. For instance, the $\FO(=\!\!(\cdot, \cdot)_{1,1})$-sentence
\begin{equation*}
\exists x \forall y \exists z (=\!\!(z, y) \wedge z \not = x)
\end{equation*}
is true in a model $\model$ if and only if it is infinite, even though $=\!\!(\cdot, \cdot)_{1,1}$ is first-order and corresponds to the sentence $\forall x y y' (R xy \wedge R xy' \rightarrow y = y')$.

Therefore, the question arises of whether there exist interesting dependency notions for which this is not the case. More formally, one may ask if there exist nontrivial dependencies which are \emph{strongly first-order} in the following sense:
\begin{definition}[Strongly First Order Dependencies]
A $k$-ary dependency $\Dep$ is \emph{strongly first order} if every sentence of $\FO(\Dep)$ is equivalent to some sentence of $\FO$. Similarly, a family of dependencies $\DD$ is strongly first order if every sentence of $\FO(\DD)$ is equivalent to some sentence of $\FO$.
\end{definition}

In \cite{galliani13e}, a positive answer was found for the above question.

\begin{definition}
A dependency notion $\Dep$ is \emph{upwards-closed} if $(M, R) \in \Dep, R \subseteq S \Rightarrow (M, S) \in \Dep$.
\end{definition}
\begin{theorem}[\cite{galliani13e}]
\label{thm:uwc}
Let $\DD$ be a family of upwards-closed first-order dependencies. Then $\{=\!\!(\cdot)\} \cup \DD$ is strongly first order.
\end{theorem}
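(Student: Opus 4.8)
The plan is to prove the statement in two stages: first reduce away the constancy atoms, then show that $\FO(\DD)$ by itself (for $\DD$ upward-closed and first-order) is strongly first order -- indeed that the team-semantic satisfaction relation of every $\FO(\DD)$-formula is first-order definable over the signature $\{R\}$. For the first stage, observe that a constancy atom $=\!\!(v)$ occurring anywhere only requires the value of $v$ to be pinned down on the current team, so by a normal-form argument -- hoisting each such atom to the prefix as an existential quantifier over a fresh constant, exactly as in the proof that $\FO(=\!\!(\cdot))$ is no more expressive than $\FO$ -- every $\FO(\{=\!\!(\cdot)\}\cup\DD)$-sentence $\Phi$ is equivalent to one of the form $\exists c_1\cdots\exists c_m\bigl(\bigwedge_{i\le m}=\!\!(c_i)\wedge\Psi\bigr)$ with $\Psi\in\FO(\DD)$ constancy-free and $\mathrm{Fv}(\Psi)\subseteq\{c_1,\dots,c_m\}$. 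Evaluating on $\startteam$ this gives $\model\models\Phi$ iff $\model\models_{\{[\tuple a/\tuple c]\}}\Psi$ for some $\tuple a\in M^m$, so it suffices to attach to every constancy-free $\Psi\in\FO(\DD)$ with free variables $\tuple x$ a first-order sentence $\Psi^\star$ over $\{R\}$ ($R$ of arity $|\tuple x|$) with $\model\models_X\Psi\iff(M,X(\tuple x))\models\Psi^\star$ for all teams $X$ with domain $\tuple x$; then $\Phi$ is equivalent to the first-order sentence asserting that $\Psi^\star$ holds of some singleton relation.

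The engine of the second stage is that every $\FO(\DD)$-formula is \emph{union closed}: if $\model\models_{X_i}\Psi$ for a family of teams with a common domain, then $\model\models_{\bigcup_i X_i}\Psi$. This is clear for first-order literals and for the upward-closed atoms (as $(\bigcup_i X_i)(\tuple v)\supseteq X_j(\tuple v)$), and it is preserved through $\wedge,\vee,\exists,\forall$, the $\exists$-step using the pointwise-union witness $F(s)=\bigcup\{F_i(s):s\in X_i\}$. Hence, for any $\Psi$ and team $X$, if some subteam of $X$ satisfies $\Psi$ then there is a largest one, $X^\Psi:=\bigcup\{Y\subseteq X:\model\models_Y\Psi\}$, which itself satisfies $\Psi$, and $\model\models_X\Psi$ iff $X^\Psi$ exists and equals $X$. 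One then proves, by induction on $\Psi$, that the relation ``$s(\tuple x)\in X^\Psi$'' is defined by a first-order formula $\Psi^\Diamond(R,\tuple x)$ that is moreover \emph{positive} in $R$ (and that ``$X^\Psi$ exists'' is first-order), from which $\Psi^\star$ is read off. Using Lyndon's positivity theorem to put each $\Dep^\star$ in positive form, an upward-closed atom $\Dep\tuple v$ gives $\Psi^\Diamond:=R\tuple x\wedge\Dep^\star[\pi_{\tuple v}R]$ -- here one uses crucially that, since $\Dep$ is upward-closed, a subteam of $X$ can satisfy $\Dep\tuple v$ only if $X$ itself does, so $X^{\Dep\tuple v}$ is either $X$ or undefined. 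The cases of literals, of $\vee$ (where $\Psi^\Diamond=\psi^\Diamond\vee\theta^\Diamond$, since an assignment lies in a $(\psi\vee\theta)$-subteam iff it lies in a $\psi$-subteam or in a $\theta$-subteam), and of the quantifiers are then routine, the quantifier cases using that $X[M/v]$ is first-order definable from $X$.

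The main obstacle is the inductive step for $\wedge$ (and, in a similar way, for $\exists$): here $X^{\psi\wedge\theta}$ is the largest subteam of $X$ that is simultaneously a fixed point of the operators ``cut down to the largest $\psi$-subteam'' and ``cut down to the largest $\theta$-subteam'', i.e.\ the greatest fixed point of their composition, which a priori calls for an unbounded, even transfinite, iteration and so need not be first-order. The resolution -- and this is exactly the point at which one needs upward-closure, not merely union-closure, which inclusion logic also enjoys without being strongly first order -- is that this iteration stabilises after a number of steps bounded in terms of $\psi$ and $\theta$: each of the two operators is idempotent (its output already satisfies the relevant formula, by union-closure), and shrinking a team can only make a dependency atom of $\psi\wedge\theta$ pass from satisfied to unsatisfied, never the reverse, so the set of those atoms satisfied by the current subteam is monotone-decreasing along the iteration; once it has stabilised the remaining operations are fixed first-order restrictions, which reach a fixed point at once. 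The bounded iteration thus unfolds into a first-order formula. Some additional routine bookkeeping is needed for atoms such as $\all(\tuple v)$ that lack the empty-team property, where $X^\Psi$ may fail to exist and the family of satisfying subteams is not downward closed; one handles this by carrying, alongside each $\Psi$, a first-order sentence recording whether $\Psi$ holds on the empty team. Once the translation $\Psi\mapsto\Psi^\star$ is established, the first stage delivers the first-order sentence equivalent to $\Phi$.
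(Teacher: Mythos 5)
Your high-level plan (extract the constancy atoms via the normal form, then show that for upwards-closed first-order $\DD$ the team-satisfaction relation of $\FO(\DD)$-formulas is first-order definable over $(M,X(\tuple x))$) is sound, and your instinct that upwards-closure -- not mere union-closure -- is what prevents inclusion-logic-style fixed-point behaviour is exactly right. But the pivotal inductive step is not actually proved. For $\wedge$ you reduce $X^{\psi\wedge\theta}$ to a greatest fixed point of the composition of the two operators $Y\mapsto Y^{\psi}$, $Y\mapsto Y^{\theta}$, and you claim the iteration stabilises after a number of steps bounded in terms of $\psi$ and $\theta$ because ``the set of dependency atoms satisfied by the current subteam is monotone-decreasing.'' This is not a well-defined quantity: the atoms occur under quantifiers and disjunctions, so along the evaluation they are checked on teams \emph{derived} from the current subteam (restrictions, cylindrifications, witness extensions), not on the current subteam itself, and nothing you say shows that once some finite family of ``flags'' stabilises the operators act as fixed first-order restrictions. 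A priori, the greatest fixed point of two monotone first-order-definable operators is exactly the kind of object that is \emph{not} first-order (this is the inclusion-logic/GFP situation you yourself invoke), so this step needs a genuine argument, and as written it is a gap. A second, related slip: the universal quantifier is not ``routine'' in your setup. $X^{\forall v\psi}$ is the largest $Y$ with $Y[M/v]\models\psi$, and since satisfying teams are not closed under subteams, this is \emph{not} obtainable by simply cylindrifying $(X[M/v])^{\psi}$; it raises the same fixed-point problem as $\wedge$. (By contrast your $\exists$ case, which you flag as problematic, is actually the clean one: $X^{\exists v\psi}$ is just the projection of $(X[M/v])^{\psi}$.) Your treatment of $\vee$ also needs the existence/empty-team flags more seriously than stated: if no subteam of $X$ satisfies $\theta$ and $\emptyset\not\models\theta$, an assignment can lie in a $\psi$-satisfying subteam without lying in any $(\psi\vee\theta)$-satisfying subteam, so $\Psi^{\Diamond}=\psi^{\Diamond}\vee\theta^{\Diamond}$ is wrong without those flags.

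The missing ingredient is precisely the paper's Lemma \ref{lemma:raise} (from the cited source): for upwards-closed $\DD$, if $Y\models\phi$, $Y\subseteq Z$ and $Z\models\phi^f$, then $Z\models\phi$, where $\phi^f$ is the flattening. With this lemma your whole fixed-point apparatus collapses: whenever \emph{any} subteam of $X$ satisfies $\Psi$, that subteam is contained in $X\upharpoonright\Psi^f$, which satisfies $\Psi^f$, hence satisfies $\Psi$; so $X^{\Psi}=X\upharpoonright\Psi^f$ whenever it exists, which is first-order definable outright, and ``$X^{\Psi}$ exists'' becomes the first-order condition ``$X\upharpoonright\Psi^f\models\Psi$.'' In particular $X^{\psi\wedge\theta}=X\upharpoonright(\psi^f\wedge\theta^f)$ in one step, and the $\forall$ case is handled the same way. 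Equivalently (and this is essentially how the cited proof goes) one shows that $\model\models_X\phi$ iff $\model\models_X\phi^f$ and every occurrence of a $\Dep$-atom is satisfied by the canonical ``maximal'' team at that occurrence -- obtained by always splitting along the flattenings and taking maximal witness sets -- and these canonical teams are uniformly first-order definable from $X(\tuple x)$. So your architecture can be repaired, but you must prove (or import) the raise lemma; without it the $\wedge$ and $\forall$ steps, which carry the whole weight of the theorem, are unsubstantiated.
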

As a consequence, it was shown that -- for instance -- all the following dependencies are strongly first-order for all arities of $\tuple v$ and $\tuple w$, as is any set containing them (and the constancy atoms $=\!\!(\cdot)$): 
\begin{description}
\item[\textbf{TS-nonempty:}] $\model \models_X \nonempty$ iff $X \not = \emptyset$; 
\item[\textbf{TS-ncon:}] $\model \models_X \not=\!\!(\tuple v)$ iff there exist $s, s' \in X$ such that $s(\tuple v) \not = s'(\tuple v)$; 
\item[\textbf{TS-ndep:}] $\model \models_X \not =\!\!(\tuple v, \tuple w)$ iff there exist $s, s' \in X$ with $s(\tuple v) = s'(\tuple v)$ but $s(\tuple w) \not = s'(\tuple w)$; 
\item[\textbf{TS-geq:}] For all $n \in \mathbb N$, $\model \models_X |\tuple v| \geq n$ iff $|X(\tuple v)| \geq n$; 
\item[\textbf{TS-all:}] $\model \models_X \all(\tuple v)$ iff $X(\tuple v) = M^{|\tuple v|}$; 
\item[\textbf{TS-$\not \subseteq$:}] $\model \models_X \tuple v \not \!\subseteq \tuple w$ iff there exists some $s \in X$ such that for all $s' \in X$, $s(\tuple v) \not = s'(\tuple w)$; 
\item[\textbf{TS-$\not \!\!\bot$:}] $\model \models_X \tuple v \not \!\!\bot_{\tuple u} \tuple w$ iff there exist $s, s' \in X$ with $s(\tuple u) = s'(\tuple u)$ but such that for all $s'' \in X$, $s''(\tuple u \tuple v \tuple w) \not = s(\tuple u \tuple v) s'(\tuple w)$.
\end{description}
The last two dependencies are not upwards-closed, but as shown in \cite{galliani13e} they are definable in terms of constancy atoms and first-order, upwards-closed dependencies. 

We conclude this section by mentioning a few shorthands and results that we will need to use in the rest of this work: 

\begin{definition}[$\top$, $\bot$]
Let $v$ be any variable. Then we write $\top$ for $\forall v (v = v)$ and $\bot$ for $\exists v (v \not = v)$.
\end{definition}
\begin{proposition}
For all models $\model$ and teams $X$, $\model \models_X \top$; and furthermore, $\model \models_X \bot$ if and only if $X = \emptyset$.
\end{proposition}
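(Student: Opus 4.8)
The plan is to unwind the team-semantic rules \textbf{TS-$\forall$}, \textbf{TS-$\exists$} and \textbf{TS-lit}, keeping careful track of what happens to the empty team. Everything reduces to the fact that $v=v$ is satisfied by every assignment while $v\neq v$ is satisfied by none, together with the nonemptiness constraint hidden in the rule for $\exists$.

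For the first claim, fix the variable $v$ occurring in $\top = \forall v (v = v)$. By \textbf{TS-$\forall$}, $\model \models_X \top$ holds if and only if $\model \models_{X[M/v]} v = v$. Since $v = v$ is a first-order literal, \textbf{TS-lit} reduces this to requiring $\model \models_s v = v$ (in Tarski's sense) for every $s \in X[M/v]$; as this is trivially true of every assignment (and vacuously true when $X[M/v] = \emptyset$), we conclude $\model \models_X \top$ for every model $\model$ and every team $X$, regardless of whether $X$ is empty.

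For the second claim, apply \textbf{TS-$\exists$}: $\model \models_X \bot$ holds if and only if there is a function $F : X \rightarrow \parts(M) \setminus \{\emptyset\}$ such that $\model \models_Y v \neq v$, where $Y = X[F/v]$. Again $v \neq v$ is a first-order literal, so by \textbf{TS-lit} this forces $\model \models_s v \neq v$ for all $s \in Y$; since no assignment satisfies $v \neq v$, the condition becomes simply $Y = \emptyset$. Now the decisive observation is that \textbf{TS-$\exists$} requires $F(s) \neq \emptyset$ for each $s \in X$, so $Y = X[F/v] = \bigcup_{s \in X} \{ s[m/v] : m \in F(s) \}$ is empty exactly when $X$ is empty. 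Hence, if $X \neq \emptyset$ no admissible $F$ yields $Y = \emptyset$ and so $\model \not\models_X \bot$; while if $X = \emptyset$ the unique (empty) function $F$ gives $Y = \emptyset$, which satisfies $v \neq v$ vacuously, so $\model \models_X \bot$. This is precisely the equivalence $\model \models_X \bot$ iff $X = \emptyset$.

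There is no genuinely hard step here; the only point that must not be overlooked is that the existential rule forces each $F(s)$ to be nonempty, which is exactly what prevents a nonempty team from collapsing to the empty team under $\exists v$ and thus what makes $\bot$ act as an ``emptiness test''.
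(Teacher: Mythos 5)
Your proof is correct and is exactly the routine unwinding that the paper dismisses as ``Obvious'': both claims reduce to \textbf{TS-lit} applied to $v=v$ and $v\neq v$, with the nonemptiness of each $F(s)$ in \textbf{TS-$\exists$} being the one point worth making explicit, as you do. Nothing to add.
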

\begin{proof}
Obvious.
\end{proof}

%
\begin{definition}[Dual Negation]
Let $\phi$ be a first-order formula in negation normal form. Then we write $\lnot \phi$ as a shorthand for the formula thus obtained: 
\begin{itemize}
\item If $\phi$ is a positive literal $R \tuple t$ or $t_1 = t_2$, $\lnot \phi$ is its negation (that is, $\lnot R \tuple t$ or $t_1 \not = t_2)$; 
\item If $\phi$ is a negative literal $\lnot R \tuple t$ or $t_1 \not = t_2$, $\lnot \phi$ is the corresponding positive literal (that is, $R \tuple t$ or $t_1 = t_2)$; 
\item $\lnot (\phi \vee \psi) = (\lnot \phi) \wedge (\lnot \psi)$; 
\item $\lnot (\phi \wedge \psi) = (\lnot \phi) \vee (\lnot \psi)$; 
\item $\lnot (\exists v \phi) = \forall v (\lnot \psi)$; 
\item $\lnot (\forall v \phi) = \exists v (\lnot \psi)$; 
\end{itemize}
\end{definition}
It is not difficult to see, by structural induction on $\phi$, that 
\begin{proposition}
For all first-order formulas $\phi$, all models $\model$ and all teams $X$, $\model \models_X \lnot \phi$ if and only if for all $s \in X$ we have that $\model \models_s \lnot \phi$ according to Tarski's semantics. 
\end{proposition}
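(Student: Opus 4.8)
\textit{Proof proposal.} The plan is to proceed, as the statement already anticipates, by structural induction on $\phi$. It is worth noting at the outset that $\lnot\phi$ is itself a first-order formula in negation normal form, so the claim is actually a special case of Proposition~\ref{propo:flat} applied to $\lnot\phi$; nonetheless it is instructive to spell the induction out, since its cases match the clauses of the dual negation one by one. For the base case, if $\phi$ is a literal then $\lnot\phi$ is again a literal (its Tarski negation), and both sides of the equivalence reduce, directly by the rule \textbf{TS-lit}, to the statement that $\model \models_s \lnot\phi$ holds for every $s \in X$.

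For the inductive step there are four cases. If $\phi = \psi \vee \theta$ then $\lnot\phi = \lnot\psi \wedge \lnot\theta$: by \textbf{TS-$\wedge$} together with the induction hypothesis for $\psi$ and $\theta$, $\model \models_X \lnot\psi \wedge \lnot\theta$ holds iff every $s \in X$ satisfies both $\lnot\psi$ and $\lnot\theta$ in Tarski's sense, i.e.\ iff every $s \in X$ satisfies $\lnot(\psi \vee \theta)$. The case $\phi = \exists v\,\psi$ is entirely symmetric, using \textbf{TS-$\forall$} and the identity $X[M/v] = \{s[m/v] : s \in X, m \in M\}$. The two cases carrying genuine content are $\phi = \psi \wedge \theta$, where $\lnot\phi = \lnot\psi \vee \lnot\theta$, and $\phi = \forall v\,\psi$, where $\lnot\phi = \exists v\,\lnot\psi$. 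For $\phi = \psi \wedge \theta$, the delicate direction is from Tarski's semantics to team semantics: assuming every $s \in X$ satisfies $\lnot\psi \vee \lnot\theta$, one takes the canonical split $Y = (X\upharpoonright \lnot\psi)$ and $Z = X \setminus Y$; each $s \in Z$ fails $\lnot\psi$ and hence satisfies $\lnot\theta$, so by the induction hypothesis $\model \models_Y \lnot\psi$ and $\model \models_Z \lnot\theta$, and $X = Y \cup Z$ gives $\model \models_X \lnot\psi \vee \lnot\theta$. Conversely, any witnessing split $X = Y \cup Z$ with $\model \models_Y \lnot\psi$ and $\model \models_Z \lnot\theta$ forces, via the induction hypothesis, every assignment of $X$ to satisfy $\lnot\psi$ or $\lnot\theta$ in Tarski's sense, i.e.\ to satisfy $\lnot(\psi\wedge\theta)$. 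For $\phi = \forall v\,\psi$, the Tarski-to-team direction uses the singleton choice function $F(s) = \{m_s\}$, where $m_s$ witnesses $\model \models_{s[m_s/v]} \lnot\psi$; the reverse direction uses nonemptiness of each $F(s)$ to extract, for every $s \in X$, some $m \in F(s)$ with $s[m/v] \in X[F/v]$, which then satisfies $\lnot\psi$ in Tarski's sense by the induction hypothesis.

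The only point requiring any attention is the empty team, but there every formula and its dual are trivially satisfied and every ``for all $s \in X$'' clause is vacuous, so all cases go through uniformly. Accordingly there is no serious obstacle: the mild ``hard part'' is the Tarski-to-team direction of the $\wedge$-case (and, symmetrically, the $\forall$-case), where one must exhibit a concrete split of the team — here the choice $Y = (X\upharpoonright \lnot\psi)$ and the use of singleton choice functions are the only non-automatic constructions, everything else being bookkeeping that pairs each dual-negation clause with the corresponding team-semantics rule. \qed
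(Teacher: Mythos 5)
Your proof is correct and follows exactly the route the paper indicates (the paper merely asserts the result ``by structural induction on $\phi$''; you carry out that induction, with each clause of the dual negation paired against the corresponding team-semantics rule). Your opening observation is also accurate: since $\lnot\phi$ is itself a first-order formula in negation normal form, the statement is literally an instance of Proposition~\ref{propo:flat}, so the explicit induction is a re-derivation rather than new content.
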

\begin{definition}[$\phi \upharpoonright \theta$]
Let $\DD$ be any class of dependencies, let $\phi \in \FO(\DD)$ and let $\theta \in \FO$. Then we write $\phi \upharpoonright \theta$ as a shorthand for 
\begin{equation*}
(\lnot \theta) \vee (\theta \wedge \phi)
\end{equation*}
\end{definition}
\begin{proposition}[\cite{galliani13e}]
Let $\DD$ be any class of dependencies, let $\phi \in \FO(\DD)$ and let $\theta \in \FO$. Then for all suitable models $\model$ and teams $X$, 
\begin{equation*}
\model \models_X \phi \upharpoonright \theta \mbox{ if and only if } \model \models_{X \upharpoonright \theta} \phi.
\end{equation*}
\end{proposition}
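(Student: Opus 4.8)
The plan is to unfold the abbreviation $\phi \upharpoonright \theta = (\lnot \theta) \vee (\theta \wedge \phi)$ and prove the two implications by directly manipulating the team-semantic rules \textbf{TS-$\vee$} and \textbf{TS-$\wedge$}, using the fact established in the proposition immediately preceding this statement that $\lnot \theta$ is \emph{flat}: $\model \models_Y \lnot \theta$ holds precisely when every $s \in Y$ satisfies $\lnot \theta$ under Tarski semantics, i.e.\ precisely when no $s \in Y$ satisfies $\theta$. I will also use Proposition \ref{propo:flat} for the first-order formula $\theta$ itself.

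For the direction from right to left, assume $\model \models_{X \upharpoonright \theta} \phi$. I would exhibit the split $X = Y \cup Z$ with $Y := \{ s \in X : \model \not\models_s \theta \}$ and $Z := X \upharpoonright \theta$. Then $\model \models_Y \lnot \theta$ by flatness of $\lnot \theta$; and $\model \models_Z \theta \wedge \phi$ holds because every assignment of $Z$ satisfies $\theta$, so $\model \models_Z \theta$ by Proposition \ref{propo:flat}, while $\model \models_Z \phi$ is exactly the hypothesis. By \textbf{TS-$\vee$} this yields $\model \models_X (\lnot \theta) \vee (\theta \wedge \phi)$.

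For the converse, assume $\model \models_X (\lnot \theta) \vee (\theta \wedge \phi)$, so by \textbf{TS-$\vee$} fix $Y, Z \subseteq X$ with $X = Y \cup Z$, $\model \models_Y \lnot \theta$ and $\model \models_Z \theta \wedge \phi$; by \textbf{TS-$\wedge$} the latter gives $\model \models_Z \theta$ and $\model \models_Z \phi$. The key step is to show that $Z$ must equal $X \upharpoonright \theta$ exactly. Indeed, $\model \models_Z \theta$ together with Proposition \ref{propo:flat} gives $Z \subseteq X \upharpoonright \theta$; conversely, if $s \in X \upharpoonright \theta$ then $s \notin Y$ (since $\model \models_Y \lnot \theta$ forbids any assignment of $Y$ from satisfying $\theta$), so $s \in Z$ because $X = Y \cup Z$, and hence $X \upharpoonright \theta \subseteq Z$. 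Therefore $Z = X \upharpoonright \theta$, and $\model \models_Z \phi$ is exactly the desired conclusion.

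The main obstacle — if it deserves the name — is the observation that the split witnessing the disjunction is essentially forced. Since $\phi$ may contain arbitrary dependencies from $\DD$, it need not be downward closed, so one cannot in general pass from ``some subteam of $X \upharpoonright \theta$ satisfies $\phi$'' to ``$X \upharpoonright \theta$ satisfies $\phi$''; what rescues the argument is precisely that the flatness of the first-order part $\lnot \theta$ pins $Z$ down to be exactly $X \upharpoonright \theta$, so no closure property of $\phi$ is ever invoked. Everything else is a routine unwinding of the semantic clauses.
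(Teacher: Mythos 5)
Your proof is correct; the paper itself only cites this result from \cite{galliani13e} without reproducing a proof, and your argument is the standard one: the split witnessing the disjunction $(\lnot \theta) \vee (\theta \wedge \phi)$ is forced up to the harmless inclusion $Z = X \upharpoonright \theta$, because the flatness of $\lnot\theta$ and $\theta$ pins the two disjuncts to the complementary parts of $X$. Your closing remark is also the right one to make explicit — no downward or upward closure of $\phi$ is needed precisely because $Z$ is determined exactly, not merely bounded.
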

\begin{definition}[Flattening]
Let $\DD$ be any class of dependencies and let $\phi \in \FO(\nonempty)$. Then we define its \emph{flattening} $\phi^f$ as the first-order formula obtained by substituting all atoms $\Dep \tuple v$ in it with $\top$.
\end{definition}
\begin{lemma}
\label{lemma:flatup}
For all classes of dependencies $\DD$, models $\model$, teams $X$, and formulas $\phi \in \FO(\DD)$, if $\model \models_X \phi$ then $\model \models_X \phi^f$.
\end{lemma}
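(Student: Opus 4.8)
The plan is to prove the lemma by a routine structural induction on $\phi \in \FO(\DD)$, establishing in one sweep that $\model \models_X \phi$ implies $\model \models_X \phi^f$ for every model $\model$ and team $X$. The one fact that makes the induction go through smoothly is that flattening rewrites only the dependency atoms, and so commutes with every logical operator: $(\psi \vee \theta)^f = \psi^f \vee \theta^f$, $(\psi \wedge \theta)^f = \psi^f \wedge \theta^f$, $(\exists v\, \psi)^f = \exists v\, \psi^f$ and $(\forall v\, \psi)^f = \forall v\, \psi^f$. I would record this observation at the outset, reading it straight off the definition of $\phi^f$; note also that $\phi^f$ is genuinely a first-order formula (this is why $\top$ was defined as $\forall v (v = v)$), so that the conclusion $\model \models_X \phi^f$ is meaningful.

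For the base cases: if $\phi$ is a first-order literal then $\phi^f = \phi$ and there is nothing to prove; if $\phi$ is a dependency atom $\Dep \tuple v$ then $\phi^f = \top$, and since $\model \models_X \top$ holds for every team $X$ the implication is again immediate.

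For the inductive step I would treat the four connectives one at a time, in each case taking whatever witness the relevant team-semantic rule supplies for $\model \models_X \phi$, applying the induction hypothesis to the appropriate subteam(s), and then reassembling the very same witness to obtain $\model \models_X \phi^f$. Concretely: for $\psi \vee \theta$, a splitting $X = Y \cup Z$ with $\model \models_Y \psi$ and $\model \models_Z \theta$ gives, by induction hypothesis, $\model \models_Y \psi^f$ and $\model \models_Z \theta^f$, hence $\model \models_X \psi^f \vee \theta^f = (\psi \vee \theta)^f$ via \textbf{TS-$\vee$}; for $\psi \wedge \theta$ both conjuncts are inherited directly; for $\exists v\, \psi$ the supplementing function $F : X \rightarrow \parts(M) \backslash \{\emptyset\}$ carries over unchanged; and for $\forall v\, \psi$ one simply applies the induction hypothesis to the team $X[M/v]$.

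I do not expect any genuine obstacle: the argument is purely mechanical once the commutation of flattening with the connectives has been noted, and there is no subtlety involving downward closure or the like, since flattened formulas are first order (one could, if desired, unfold $\model \models_X \phi^f$ via Proposition \ref{propo:flat} as ``$\model \models_s \phi^f$ for all $s \in X$'', but the team-semantic rules apply directly and this is not needed). If anything, the only thing worth double-checking is that the paper's definition of flattening really does leave first-order literals --- including negated ones --- untouched, so that the literal base case and the commutation identities hold verbatim.
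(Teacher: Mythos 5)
Your proof is correct and is exactly the routine structural induction the paper has in mind — the paper itself dismisses this lemma with the single word ``Trivial.'' Nothing to add: the base cases and the commutation of flattening with the connectives are handled properly, and no downward-closure subtlety is needed.
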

\begin{proof}
Trivial.
\end{proof}
\begin{lemma}[\cite{galliani13e}]
\label{lemma:raise}
Let $\DD$ be a class of \emph{upwards-closed} (but not necessarily first-order) dependencies. Then for all models $\model$, teams $X$ and $Y$ such that $X \subseteq Y$, and $\phi \in \FO(\DD)$, if $\model \models_X \phi$ and $\model \models_Y \phi^f$ then $\model \models_Y \phi$.
\end{lemma}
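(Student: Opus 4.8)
The plan is to prove the statement by structural induction on $\phi \in \FO(\DD)$, with the inductive hypothesis being the full statement applied to the immediate subformulas of $\phi$.

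For the base cases: if $\phi$ is a first-order literal then $\phi^f = \phi$, and the claim reduces to the trivial implication $\model \models_Y \phi^f \Rightarrow \model \models_Y \phi$. If $\phi$ is a dependency atom $\Dep\,\tuple v$, then from $X \subseteq Y$ we get $X(\tuple v) \subseteq Y(\tuple v)$; since $\model \models_X \Dep\,\tuple v$ means $(M, X(\tuple v)) \in \Dep$ and $\Dep$ is upwards-closed, we obtain $(M, Y(\tuple v)) \in \Dep$, i.e. $\model \models_Y \Dep\,\tuple v$. This is the only place where upwards-closure is used. The cases $\phi = \psi \wedge \theta$ and $\phi = \forall v\,\psi$ are immediate: for the conjunction apply the induction hypothesis to $\psi$ and $\theta$ separately (noting $\phi^f = \psi^f \wedge \theta^f$), and for the universal quantifier note that $X[M/v] \subseteq Y[M/v]$ and apply the induction hypothesis to $\psi$ with these two teams, using $\model \models_{X[M/v]} \psi$ and $\model \models_{Y[M/v]} \psi^f$.

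The interesting cases are $\vee$ and $\exists$, and these are where the hypothesis $\model \models_Y \phi^f$ is needed. For $\phi = \psi \vee \theta$: from $\model \models_X \phi$ we get a split $X = X_1 \cup X_2$ with $\model \models_{X_1} \psi$ and $\model \models_{X_2} \theta$, and from $\model \models_Y \phi^f = \psi^f \vee \theta^f$ we get a split $Y = Y_1 \cup Y_2$ with $\model \models_{Y_1}\psi^f$ and $\model \models_{Y_2}\theta^f$. Set $Y_1' := X_1 \cup Y_1$ and $Y_2' := X_2 \cup Y_2$; since $X \subseteq Y$, these cover $Y$. By Lemma \ref{lemma:flatup} we have $\model \models_{X_1}\psi^f$, and since $\psi^f$ is first-order and hence flat (Proposition \ref{propo:flat}), $\model \models_{X_1}\psi^f$ together with $\model \models_{Y_1}\psi^f$ gives $\model \models_{Y_1'}\psi^f$. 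Now $X_1 \subseteq Y_1'$ with $\model \models_{X_1}\psi$ and $\model \models_{Y_1'}\psi^f$, so the induction hypothesis yields $\model \models_{Y_1'}\psi$; symmetrically $\model \models_{Y_2'}\theta$, whence $\model \models_Y \psi \vee \theta$. The case $\phi = \exists v\,\psi$ is handled in the same spirit: given the witnessing function $F$ for $\model \models_X \exists v\,\psi$ and the witnessing function $G$ for $\model \models_Y \exists v\,\psi^f$, define $H(s) = F(s) \cup G(s)$ for $s \in X$ and $H(s) = G(s)$ for $s \in Y \setminus X$; then $H$ never takes the empty value, $Y[H/v] = X[F/v] \cup Y[G/v]$, and $X[F/v] \subseteq Y[H/v]$. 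Flatness of $\psi^f$, together with $\model \models_{X[F/v]}\psi^f$ (from Lemma \ref{lemma:flatup}) and $\model \models_{Y[G/v]}\psi^f$, gives $\model \models_{Y[H/v]}\psi^f$, and the induction hypothesis applied to $\psi$ with $X[F/v] \subseteq Y[H/v]$ gives $\model \models_{Y[H/v]}\psi$, so $\model \models_Y \exists v\,\psi$.

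The main obstacle is really just the bookkeeping in the $\vee$ and $\exists$ cases rather than any conceptual difficulty: one must verify that the recombined split (resp. recombined Skolem-type function) actually covers all of $Y$ — which is exactly where $X \subseteq Y$ enters — and that the flattened subformulas are not destroyed when the subteams are enlarged, for which the flatness of first-order formulas together with Lemma \ref{lemma:flatup} is precisely what is needed.
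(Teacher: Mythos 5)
Your proof is correct; the paper itself gives no proof of this lemma (it is imported from \cite{galliani13e}), but your structural induction — upwards-closure handling the atoms, and the recombination of splits/choice functions via flatness of $\phi^f$ in the $\vee$ and $\exists$ cases — is exactly the standard argument for this result. No gaps.
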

\begin{lemma}[\cite{galliani13e}]
\label{lemma:constout} 
Let $\DD$ be a class of dependencies (not necessarily first-order or upwards-closed) and let $\phi(\tuple v)$ be a $\FO(=\!\!(\cdot), \DD)$ formula. Then $\phi(\tuple v)$ is equivalent to some formula of the form $\exists \tuple w(=\!\!(\tuple w) \wedge \psi(\tuple w, \tuple v))$, where $\psi \in \FO(\DD)$ contains the exactly the same instances of $\Dep$-atoms (for all $\Dep \in \DD$) that $\phi$ does, and in the same number.
\end{lemma}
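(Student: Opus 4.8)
The plan is to argue by structural induction on $\phi$, in such a way that — read bottom-up — the induction pushes every constancy atom occurring in $\phi$ towards the front of the formula. The engine of the argument is the fact that a block of existential quantifiers guarded by a single constancy atom behaves exactly like a classical, element-valued second-order existential quantifier, and hence commutes with $\wedge$, $\vee$, $\exists$ and $\forall$ provided one keeps the various such blocks over pairwise disjoint tuples of fresh variables.

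First I would record the following consequence of \textbf{TS-$\exists$} and \textbf{TS-con}: for every $\psi \in \FO(\DD)$, every fresh tuple $\tuple w$, every model $\model$ and every team $X$,
\[ \model \models_X \exists \tuple w\,(=\!\!(\tuple w) \wedge \psi) \quad\Longleftrightarrow\quad \text{there is } \tuple a \in M^{|\tuple w|} \text{ with } \model \models_{X[\tuple a/\tuple w]} \psi, \]
where $X[\tuple a/\tuple w] = \{ s[\tuple a/\tuple w] : s \in X\}$ denotes the uniform extension of $X$ by the constant value $\tuple a$: on a non-empty $X$ the guard $=\!\!(\tuple w)$ forces the choice function of \textbf{TS-$\exists$} to be constantly $\{\tuple a\}$ on every assignment, and for $X = \emptyset$ both sides simply assert $\model \models_\emptyset \psi$ (using that $M \neq \emptyset$).

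With this observation the base cases are immediate: a first-order literal, and a dependency atom $\Dep\tuple u$ with $\Dep \in \DD$, are already of the required shape with $\tuple w$ empty; while a constancy atom $=\!\!(\tuple u)$, for $\tuple u = u_1 \ldots u_m$, is equivalent to $\exists w_1 \ldots w_m\,(=\!\!(w_1 \ldots w_m) \wedge \bigwedge_{i=1}^m u_i = w_i)$, whose guarded part is first-order, so that no dependency atom is introduced. For the inductive cases I would invoke the induction hypothesis to write $\phi_j \equiv \exists \tuple w_j\,(=\!\!(\tuple w_j) \wedge \psi_j)$ with $\psi_j \in \FO(\DD)$ carrying exactly the dependency-atom occurrences of $\phi_j$, and — renaming bound variables — take the $\tuple w_j$ fresh and pairwise disjoint. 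Then, using the displayed equivalence together with the locality of team semantics (satisfaction depends only on the restriction of the team to the free variables of the formula), one verifies
\[ \phi_1 \wedge \phi_2 \equiv \exists \tuple w_1\tuple w_2\,\bigl(=\!\!(\tuple w_1\tuple w_2) \wedge (\psi_1 \wedge \psi_2)\bigr), \qquad \phi_1 \vee \phi_2 \equiv \exists \tuple w_1\tuple w_2\,\bigl(=\!\!(\tuple w_1\tuple w_2) \wedge (\psi_1 \vee \psi_2)\bigr), \]
\[ \exists u\,\phi_1 \equiv \exists \tuple w_1\,\bigl(=\!\!(\tuple w_1) \wedge \exists u\,\psi_1\bigr), \qquad \forall u\,\phi_1 \equiv \exists \tuple w_1\,\bigl(=\!\!(\tuple w_1) \wedge \forall u\,\psi_1\bigr), \]
where $=\!\!(\tuple w_1\tuple w_2)$ is the constancy atom on the concatenated tuple, equivalent to $=\!\!(\tuple w_1)\wedge=\!\!(\tuple w_2)$. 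In each case the new guarded part lies in $\FO(\DD)$ and visibly contains exactly the dependency-atom occurrences of the original formula and no others, so both assertions of the statement are preserved all the way up the induction.

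The verifications for $\wedge$, $\exists$ and $\forall$ are routine given the displayed equivalence: for $\wedge$ the two constants can be picked independently; for $\exists$ one uses $s[m/u][\tuple a_1/\tuple w_1] = s[\tuple a_1/\tuple w_1][m/u]$ and transports the lax choice function for $u$ along the bijection $s \mapsto s[\tuple a_1/\tuple w_1]$; for $\forall$ the teams $X[M/u][\tuple a_1/\tuple w_1]$ and $X[\tuple a_1/\tuple w_1][M/u]$ are literally equal. The step I expect to be the real obstacle is the disjunction: there one must match a split $X = Y \cup Z$ witnessing $\phi_1 \vee \phi_2$ with a \emph{single} pair of constants $\tuple a_1, \tuple a_2$ acting globally on $X[\tuple a_1\tuple a_2/\tuple w_1\tuple w_2]$, and conversely. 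This should go through precisely because $\psi_1$ never mentions $\tuple w_2$ and $\psi_2$ never mentions $\tuple w_1$: a subteam of $X[\tuple a_1\tuple a_2/\tuple w_1\tuple w_2]$, once restricted to the variables actually occurring in $\psi_j$, is exactly the corresponding subteam of $X$ extended by $\tuple a_j$ alone, so by locality the constant needed for the $Y$-part of $\phi_1 \vee \phi_2$ can be chosen independently of the one needed for the $Z$-part and then each padded harmlessly onto the other disjunct. Reconciling the \emph{local} team splits of team semantics with these \emph{global} constancy blocks is the only place where care is genuinely required; everything else is bookkeeping.
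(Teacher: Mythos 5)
Your proof is correct, and it follows essentially the argument behind this lemma (the paper itself only cites it from \cite{galliani13e} without proof): induction pulling constancy-guarded blocks to the front, using that $\exists\tuple w(=\!\!(\tuple w)\wedge\psi)$ amounts to a classical choice of a constant tuple, with locality of lax team semantics and the disjointness of the fresh tuples handling the one delicate case, disjunction. The two points you flag as needing care (the empty team in the key equivalence, and reconciling team splits with global constants) are exactly the right ones, and your treatment of both is sound.
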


The following simple result - which allows us to add, essentially for free, the classical disjunction $\sqcup$ to our language -- will also be of some use in the rest of this work:
\begin{proposition}
\label{propo:dnf}
Let $\DD$ be any class of dependencies and let $\phi \in \FO(\DD, \sqcup)$. Then $\phi$ is equivalent to some formula of the form $\bigsqcup_{i=1}^n \psi_i$, where all $\psi_i$ are in $\FO(\DD)$.
\end{proposition}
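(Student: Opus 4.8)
The plan is to prove the statement by structural induction on $\phi$, showing at each step that every occurrence of $\sqcup$ can be pushed outwards past the ordinary first-order connectives and quantifiers, so that in the end all the $\sqcup$'s sit at the top of the formula and what lies below them is $\sqcup$-free, hence a formula of $\FO(\DD)$.

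The key preliminary step is to establish a small collection of distributivity equivalences, each obtained by directly unwinding the appropriate clause of the team semantics: $(\alpha \sqcup \beta) \vee \gamma \equiv (\alpha \vee \gamma) \sqcup (\beta \vee \gamma)$, $(\alpha \sqcup \beta) \wedge \gamma \equiv (\alpha \wedge \gamma) \sqcup (\beta \wedge \gamma)$, $\exists v (\alpha \sqcup \beta) \equiv (\exists v\, \alpha) \sqcup (\exists v\, \beta)$ and $\forall v (\alpha \sqcup \beta) \equiv (\forall v\, \alpha) \sqcup (\forall v\, \beta)$, together with the mirror images in which $\sqcup$ occurs on the right, and the obvious associativity and commutativity of $\sqcup$. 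For $\vee$, for instance, $\model \models_X (\alpha \sqcup \beta) \vee \gamma$ holds precisely when there are $Y, Z \subseteq X$ with $X = Y \cup Z$, $\model \models_Z \gamma$, and either $\model \models_Y \alpha$ or $\model \models_Y \beta$; since the splitting $X = Y \cup Z$ may be chosen differently in the two cases, this is exactly the condition $\model \models_X (\alpha \vee \gamma) \sqcup (\beta \vee \gamma)$. The $\exists$ case is entirely analogous, with the witnessing function $F \colon X \to \parts(M) \setminus \{\emptyset\}$ playing the role of the splitting; the $\wedge$ and $\forall$ cases are immediate, since those clauses involve no further quantification over teams.

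Given these equivalences, together with the fact that $\equiv$ is a congruence for all the connectives and quantifiers (which is immediate from the compositionality of team semantics), the induction goes through routinely. In the base case $\phi$ is a first-order literal or a dependency atom $\Dep \tuple v$ with $\Dep \in \DD$; then $\phi \in \FO(\DD)$ already, and we take $n = 1$. For the inductive step, assume $\psi \equiv \bigsqcup_{i=1}^{m} \psi_i$ and $\theta \equiv \bigsqcup_{j=1}^{m'} \theta_j$ with all $\psi_i, \theta_j \in \FO(\DD)$. If $\phi = \psi \sqcup \theta$ we simply take the concatenation of the two lists. If $\phi$ is $\psi \vee \theta$ or $\psi \wedge \theta$, iterated use of the distributivity laws gives $\phi \equiv \bigsqcup_{i \le m,\, j \le m'} (\psi_i \circ \theta_j)$, where $\circ$ is the connective in question, and each $\psi_i \circ \theta_j$ lies in $\FO(\DD)$. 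If $\phi = \exists v\, \psi$ or $\phi = \forall v\, \psi$, then $\phi \equiv \bigsqcup_{i=1}^{m} (Q v\, \psi_i)$ with each $Q v\, \psi_i$ in $\FO(\DD)$. In every case, collapsing any nested $\sqcup$'s by associativity yields a formula of the required form $\bigsqcup_{i=1}^{n} \psi_i$.

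I expect the only point demanding genuine attention to be the distributivity law for $\vee$ (and likewise for $\exists$): one has to check that the second-order existential quantification concealed in the team-semantic clause for $\vee$ really does commute with the classical disjunction $\sqcup$, i.e.\ that the subteam split witnessing the disjunction need not be chosen uniformly over the two disjuncts. Once this observation is in place, everything else is bookkeeping.
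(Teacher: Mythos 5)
Your proposal is correct and follows essentially the same route as the paper: establish that $\sqcup$ distributes over $\vee$, $\wedge$, $\exists$ and $\forall$ (with the only delicate cases being $\vee$ and $\exists$, where the team split or choice function may be chosen separately for each disjunct), and then push all occurrences of $\sqcup$ to the top by structural induction. The paper leaves the induction implicit after proving the four distributivity equivalences, whereas you spell it out, but the argument is the same.
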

\begin{proof}
It suffices to show that the $\sqcup$ connective commutes with all other connectives: 
\begin{itemize}
\item $(\psi \sqcup \theta) \vee \chi \equiv (\psi \vee \chi) \sqcup (\theta \vee \chi)$:  Suppose that $\model \models_X (\psi \sqcup \theta) \vee \chi$. Then $X = Y \cup Z$ for two $Y$, $Z$ such that $\model \models_Y \psi \sqcup \theta$ and $\model \models_Z \chi$. By the satisfaction conditions for $\sqcup$, we have that $\model \models_Y \psi$ or $\model \models_Y \theta$. In the first case we have that $\model \models_X \psi \vee \chi$ and in the second case we have that $\model \models_X \theta \vee \chi$, so in either case $\model \models_X (\psi \vee \chi) \sqcup (\theta \vee \chi)$.

Conversely, suppose that $\model \models_X (\psi \vee \chi) \sqcup (\theta \vee \chi)$. Then $\model \models_X (\psi \vee \chi)$ or $\model \models_X (\theta \vee \chi)$. In the first case, we have that $X = Y \cup Z$ for two $Y$ and $Z$ such that $\model \models_Y \psi$ and $\model \models_Z \chi$; but then $\model \models_Y \psi \sqcup \theta$ too, and thus $\model \models_X (\psi \sqcup \theta) \vee \chi$. The case in which $\model \models_X (\theta \vee \chi)$ is dealt with analogously.
\item $(\psi \sqcup \theta) \wedge \chi \equiv (\psi \wedge \chi) \sqcup (\theta \wedge \chi)$: $\model \models_X (\psi \sqcup \theta) \wedge \chi$ iff ($\model \models_X \psi$ or $\model \models_X \theta$) and $\model \models_X \chi$ iff ($\model \models_X \psi$ and $\model \models_X \chi$) or ($\model \models_X \theta$ and $\model \models_X \chi$) iff $\model \models_X (\psi \wedge \chi) \sqcup (\theta \wedge \chi)$.
\item $\exists v (\psi \sqcup \theta) \equiv (\exists v \psi) \sqcup (\exists v \theta)$: Suppose that $\model \models_X \exists v (\psi \sqcup \theta)$. Then there exists a choice function $F$ such that $\model \models_{X[F/v]} \psi$ or $\model \models_{X[F/v]} \theta$. In the first case we have that $\model \models_X \exists v \psi$, and in the second case we have that $\model \models_X \exists v \theta$; so in either case $\model \models_X (\exists v \psi) \sqcup (\exists v \theta)$. 

Conversely, suppose that $\model \models_X  (\exists v \psi) \sqcup (\exists v \theta)$. If $\model \models_X (\exists v \psi)$ then there is a $F$ such that $\model \models_{X[F/v]} \psi$, and therefore $\model \models_{X[F/v]} \psi \sqcup \theta$, and therefore $\model \models_X \exists v (\psi \sqcup \theta)$; and similarly, if  $\model \models_X (\exists v \theta)$ there is a $F$ such that $\model \models_{X[F/v]} \theta$, and therefore $\model \models_{X[F/v]} \psi \sqcup \theta$, and therefore $\model \models_X \exists v (\psi \sqcup \theta)$.
\item $\forall v (\psi \sqcup \theta) \equiv (\forall v \psi) \sqcup (\forall v \theta)$: $\model \models_X \forall v (\psi \sqcup \theta)$ iff $\model \models_{X[M/v]} (\psi \sqcup \theta)$ iff ($\model \models_{X[M/v]} \psi$ or $\model \models_{X[M/v]} \theta$) iff ($\model \models_X \forall v \psi$ or $\model \models_X \forall v \theta$) iff $\model \models_X (\forall v \psi) \sqcup (\forall v \theta)$. 
\end{itemize}
\end{proof}
\begin{lemma}
For all models $\model$ and sentences $\phi_1, \phi_2 \in \FO$, 
\begin{equation*}
\model \models \phi_1 \sqcup \phi_2 \Leftrightarrow \model \models \phi_1 \vee \phi_2.
\end{equation*}
\end{lemma}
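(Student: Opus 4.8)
The plan is to unfold both sides of the equivalence at the initial team $\startteam$ and to exploit the fact that the only subsets of $\startteam$ are $\emptyset$ and $\startteam$ itself. First I would recall, as an immediate consequence of Proposition~\ref{propo:flat}, that every first-order formula is satisfied by the empty team: the defining condition ``$\model \models_s \psi$ for all $s \in X$'' is vacuously true when $X = \emptyset$, so $\model \models_\emptyset \psi$ for every first-order $\psi$.

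For the direction from $\sqcup$ to $\vee$, assume $\model \models \phi_1 \sqcup \phi_2$, i.e.\ $\model \models_\startteam \phi_1 \sqcup \phi_2$. By \textbf{TS-$\sqcup$} this means $\model \models_\startteam \phi_1$ or $\model \models_\startteam \phi_2$. In the first case, set $Y = \startteam$ and $Z = \emptyset$: then $Y \cup Z = \startteam$, we have $\model \models_Y \phi_1$ by assumption, and $\model \models_Z \phi_2$ since the empty team satisfies every first-order formula. Hence $\model \models_\startteam \phi_1 \vee \phi_2$ by \textbf{TS-$\vee$}. The case $\model \models_\startteam \phi_2$ is symmetric, taking $Y = \emptyset$ and $Z = \startteam$.

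For the converse, assume $\model \models_\startteam \phi_1 \vee \phi_2$. By \textbf{TS-$\vee$} there are $Y, Z \subseteq \startteam$ with $Y \cup Z = \startteam$, $\model \models_Y \phi_1$ and $\model \models_Z \phi_2$. Since $\startteam$ has exactly the two subsets $\emptyset$ and $\startteam$, and $Y \cup Z = \startteam$, at least one of $Y$ and $Z$ must equal $\startteam$. If $Y = \startteam$ then $\model \models_\startteam \phi_1$, and if $Z = \startteam$ then $\model \models_\startteam \phi_2$; in either case $\model \models_\startteam \phi_1 \sqcup \phi_2$ by \textbf{TS-$\sqcup$}, that is, $\model \models \phi_1 \sqcup \phi_2$.

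I do not expect any genuine obstacle: the entire argument rests on the triviality that $\startteam$ has precisely two subteams, combined with the empty-team property of first-order formulas furnished by Proposition~\ref{propo:flat}. The only point requiring a little care is that this argument does \emph{not} generalize verbatim to arbitrary $\FO(\DD)$, since the empty-team property may fail for general dependency notions; it is exactly the first-orderness of $\phi_1$ and $\phi_2$ that is being used.
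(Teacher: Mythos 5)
Your proof is correct and follows essentially the same route as the paper's: the forward direction splits $\startteam$ as $\startteam \cup \emptyset$ using the empty-team property of first-order formulas, and the converse uses the fact that one of the two covering subteams must equal $\startteam$. Your closing remark that first-orderness of $\phi_1,\phi_2$ is essential is a correct and worthwhile observation, though not part of the paper's argument.
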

\begin{proof}
Suppose that $\model \models \phi_1 \sqcup \phi_2$. Then, by definition, $\model \models_{\startteam} \phi_i$ for some $i \in \{1,2\}$. Suppose, without loss of generality, that $\model \models_{\startteam} \phi_1$; then since $\phi_2$ is first-order we have that $\model \models_\emptyset \phi_2$, and hence $\model \models_{\startteam} \phi_1 \vee \phi_2$. The case for $\model \models_{\startteam} \phi_2$ is analogous. Conversely, suppose that $\model \models_{\startteam } \phi_2 \vee \phi_2$: then $\startteam = Y \cup Z$ for two $Y, Z \subseteq \startteam$ such that $Y \cup Z = \startteam$, $\model \models_Y \phi_1$ and $\model \models_Z \phi_2$. Then $Y = \startteam$ or $Z = \startteam$, and hence $\model \models \phi_1$ or $\model \models \phi_2$ and finally $\model \models \phi_1 \sqcup \phi_2$, as required.
\end{proof}
\begin{corollary}
\label{coro:addsqcup}
Let $\mathcal D$ be a strongly first-order class of dependencies. Then every sentence of $\FO(\mathcal D, \sqcup)$ is equivalent to some sentence of $\FO$.
\end{corollary}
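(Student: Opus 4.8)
The plan is to combine Proposition~\ref{propo:dnf} with the preceding lemma (the one asserting $\model \models \phi_1 \sqcup \phi_2 \Leftrightarrow \model \models \phi_1 \vee \phi_2$ for first-order sentences $\phi_1,\phi_2$). First I would take an arbitrary sentence $\phi \in \FO(\mathcal D, \sqcup)$ and apply Proposition~\ref{propo:dnf} to rewrite it, up to equivalence, as $\bigsqcup_{i=1}^n \psi_i$ with each $\psi_i \in \FO(\mathcal D)$. Since the transformation in that proposition only pushes the $\sqcup$ connective outward through the remaining connectives and quantifiers, it introduces and removes no free variables; hence, as $\phi$ is a sentence, each $\psi_i$ is a sentence too.

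Next, because $\mathcal D$ is strongly first order, each sentence $\psi_i$ of $\FO(\mathcal D)$ is logically equivalent to some first-order sentence $\psi_i' \in \FO$. Consequently $\phi$ is equivalent to $\bigsqcup_{i=1}^n \psi_i'$, a finite classical disjunction of first-order sentences.

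It then remains only to observe that such a finite classical disjunction is equivalent to the ordinary first-order disjunction $\bigvee_{i=1}^n \psi_i'$. I would establish $\bigsqcup_{i=1}^n \psi_i' \equiv \bigvee_{i=1}^n \psi_i'$ by induction on $n$: the case $n = 1$ is immediate, and for the inductive step one writes $\bigsqcup_{i=1}^n \psi_i' = \psi_1' \sqcup \bigl(\bigsqcup_{i=2}^n \psi_i'\bigr)$, uses the satisfaction clause for $\sqcup$ at the empty team together with the induction hypothesis $\bigsqcup_{i=2}^n \psi_i' \equiv \bigvee_{i=2}^n \psi_i'$, and then applies the preceding lemma to the two first-order sentences $\psi_1'$ and $\bigvee_{i=2}^n \psi_i'$. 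Putting this together, $\phi$ is equivalent to the $\FO$ sentence $\bigvee_{i=1}^n \psi_i'$, which is what we want.

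There is no genuine obstacle in this argument: it is essentially a bookkeeping exercise assembling results already in hand. The only points deserving a moment's care are verifying that the normal-form rewriting of Proposition~\ref{propo:dnf} preserves the property of being a sentence, and that the binary case supplied by the preceding lemma lifts to a classical disjunction of arbitrary finite arity; both are settled by the routine inductions sketched above.
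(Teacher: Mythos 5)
Your proof is correct and follows essentially the same route as the paper's: rewrite the sentence in $\sqcup$-normal form via Proposition~\ref{propo:dnf}, replace each $\FO(\mathcal D)$-disjunct by an equivalent first-order sentence using strong first-orderness, and convert the outer $\bigsqcup$ to $\bigvee$ via the preceding lemma. The only differences are that you spell out the routine induction lifting the binary lemma to $n$-ary disjunctions and check that the normal form preserves sentencehood, both of which the paper leaves implicit (note only that sentences are evaluated at the team $\{\emptyset\}$ containing the empty assignment, not at the empty team).
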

\begin{proof}
Let $\phi \in \FO(\mathcal D, \sqcup)$. As per the above results, we may assume that $\phi$ is of the form $\bigsqcup_i \psi_i$, where all $\psi_i$ are $\FO(\mathcal D)$-sentences, and hence equivalent to first-order sentences $\psi'_i$. Now let $\phi' = \bigvee_i \psi'_i$. 
\end{proof}
\section{On the Contradictory Negation}
\label{sec:neg}
It is known from \cite{vaananen07b} that \emph{team logic} $\FO(=\!\!(\cdot, \cdot), \sim)$, that is, the logic obtained by adding the contradictory negation \emph{and} functional dependency conditions (of all arities) to the language of first-order logic, is as expressive as second-order logic over sentences; and, furthermore, in \cite{kontinennu09} it was shown that all second-order properties of teams correspond to the satisfaction conditions of team logic sentences. 

But what if we add the contradictory negation to weaker extensions of first-order logic? Or, for that matter, what if we consider $\FO(\sim)$, that is, the logic obtained by adding \emph{only} the contradictory negation to the language of first-order logic? 

In this section, we will prove that 
\begin{enumerate}
\item Both $\FO(\sim, =\!\!(\cdot))$ and $\FO(\sim, \not =\!\!(\cdot))$ are equivalent to full team logic;
\item $\FO(\sim) = \FO(\nonempty, \sqcup) = \FO(\nonempty, \sqcup, \sim)$;
\item Every sentence of $\FO(\nonempty, \sqcup)$ is equivalent to some first-order sentence.
\end{enumerate}
Thus, the contradictory negation alone does not suffice to bring the expressive power of our logic beyond that of first-order logic, but as soon as we add even simple strongly first-order dependencies such as constancy or non-constancy we obtain the full expressive power of second-order logic. 

\begin{lemma}
$\FO(\sim, =\!\!(\cdot)) = \FO(\sim, \not = \!\!(\cdot))$.
\end{lemma}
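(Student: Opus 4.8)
The plan is to observe that, once the contradictory negation $\sim$ is available, the constancy atom $=\!\!(\tuple v)$ and the non-constancy atom $\not=\!\!(\tuple v)$ become mutually interdefinable --- indeed each is literally the $\sim$-negation of the other --- and then to lift this pointwise interdefinability to an equality of the two logics by a routine replacement argument.

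First I would record the basic semantic fact. Fix a tuple of variables $\tuple v$ and a team $X$ with $\tuple v \subseteq \dom(X)$. By \textbf{TS-con}, the statement $\model \models_X =\!\!(\tuple v)$ asserts that all assignments in $X$ agree on $\tuple v$, whereas by \textbf{TS-ncon}, the statement $\model \models_X \not=\!\!(\tuple v)$ asserts that some two assignments in $X$ disagree on $\tuple v$; these conditions are exactly complementary (in particular both fail on the empty team). Hence, by \textbf{TS-$\sim$},
\[
\model \models_X =\!\!(\tuple v) \iff \model \not \models_X \not=\!\!(\tuple v) \iff \model \models_X {\sim}\,\not=\!\!(\tuple v),
\]
and symmetrically $\model \models_X \not=\!\!(\tuple v) \iff \model \models_X {\sim}\, =\!\!(\tuple v)$. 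Thus $=\!\!(\tuple v)$ is definable in $\FO(\sim,\not=\!\!(\cdot))$ and $\not=\!\!(\tuple v)$ is definable in $\FO(\sim,=\!\!(\cdot))$, for every arity of $\tuple v$; in both cases the defining formula lies over the empty vocabulary, as the notion of definability requires.

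Second, I would convert this into the desired equality of logics by means of a replacement lemma: whenever a dependency $\Dep$ is definable through a class $\DD$ by a formula $\theta(\tuple v)$ with $\model \models_X \Dep \tuple v \iff \model \models_X \theta(\tuple v)$, substituting $\theta$ for every occurrence of $\Dep$ inside an arbitrary formula of $\FO(\Dep,\DD)$ yields an equivalent formula of $\FO(\DD)$. This is proved by induction on the formula: the literal case and the $\Dep$-atom case are immediate, while for each of the connectives $\vee,\wedge,\exists,\forall,\sim$ one only needs that logical equivalence of the immediate subformulas is preserved by applying that connective, which is evident from the corresponding satisfaction clause. (The only clause not already present in the $\sim$-free setting is \textbf{TS-$\sim$}, and there the congruence property is obvious, since whether $\model \models_X {\sim}\phi$ holds depends on $\phi$ only through whether $\model \models_X \phi$ holds.) Applying this lemma in both directions --- once to rewrite every $=\!\!(\cdot)$-atom as ${\sim}\,\not=\!\!(\cdot)$, and once to rewrite every $\not=\!\!(\cdot)$-atom as ${\sim}\, =\!\!(\cdot)$ --- gives $\FO(\sim,=\!\!(\cdot)) \subseteq \FO(\sim,\not=\!\!(\cdot))$ together with the reverse inclusion, which is precisely the claimed identity.

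I do not anticipate any genuine obstacle. The semantic computation of the first step is a single line, and the only part that calls for care is the replacement lemma, whose content amounts merely to the fact that semantic equivalence is a congruence with respect to all the connectives in play, the clause for $\sim$ included; this is entirely routine.
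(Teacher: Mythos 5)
Your proof is correct and takes essentially the same route as the paper's, whose entire argument is the one-line observation that each of $=\!\!(\tuple v)$ and $\not=\!\!(\tuple v)$ is logically equivalent to the $\sim$-negation of the other (the replacement step you spell out is left implicit there). One minor slip: on the empty team, constancy holds vacuously while non-constancy fails --- they are not ``both false'' there, as your parenthetical claims --- but this is exactly what complementarity requires, so the argument is unaffected.
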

\begin{proof}
It suffices to observe that, for any tuple $\tuple v$ of variables, $\not = \!\!(\tuple v)$ is logically equivalent to $\sim =\!\!(\tuple v)$ and $=\!\!(\tuple v)$ is logically equivalent to $\sim \not = \!\!(\tuple v)$.
\end{proof}
\begin{lemma}
For any two tuple $\tuple v$, $\tuple w$ of variables, the functional dependence atom $=\!\!(\tuple v, \tuple w)$ is definable in $\FO(\sim, =\!\!(\cdot))$.
\end{lemma}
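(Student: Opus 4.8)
The plan is to exhibit an explicit defining formula built only from constancy atoms, the contradictory negation $\sim$, and the first-order shorthand $\top$, and to verify it directly. Concretely, I claim that for any tuples $\tuple v,\tuple w$ of variables,
\begin{equation*}
=\!\!(\tuple v,\tuple w)\;\equiv\;\sim\Bigl(\bigl(=\!\!(\tuple v)\wedge\sim{=\!\!(\tuple w)}\bigr)\vee\top\Bigr),
\end{equation*}
so the whole proof reduces to checking this equivalence over an arbitrary model $\model$ and an arbitrary team $X$ whose domain contains $\tuple v\tuple w$. The guiding idea is that the functional dependence condition is the contradictory negation of the non-dependence condition $\not=\!\!(\tuple v,\tuple w)$ (this is immediate from the satisfaction clauses TS-fdep and TS-ndep), and that $\not=\!\!(\tuple v,\tuple w)$ can be expressed by ``some subteam is constant on $\tuple v$ but not constant on $\tuple w$'', which in turn is rendered by the ``$\,\cdot\vee\top\,$'' device for subteam-existential quantification.

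First I would unwind the inner formula. Since $\top$ holds in every team, $\model\models_X(=\!\!(\tuple v)\wedge\sim{=\!\!(\tuple w)})\vee\top$ iff there is a subteam $Y\subseteq X$ with $\model\models_Y=\!\!(\tuple v)\wedge\sim{=\!\!(\tuple w)}$ (take $Z=X$ on the one side; the splitting in TS-$\vee$ need not be disjoint, so this is unproblematic). Here $\sim{=\!\!(\tuple w)}$ is exactly the non-constancy atom $\not=\!\!(\tuple w)$ — as already observed in the excerpt, $\not=\!\!(\tuple u)\equiv\sim{=\!\!(\tuple u)}$ — so the condition says that some $Y\subseteq X$ is constant on $\tuple v$ and non-constant on $\tuple w$. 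Such a $Y$ automatically contains at least two assignments, so the empty subteam causes no trouble (it fails $\sim{=\!\!(\tuple w)}$) and no separate nonemptiness clause is needed. Then I would show that the existence of such a $Y$ is equivalent to the existence of $s,s'\in X$ with $s(\tuple v)=s'(\tuple v)$ and $s(\tuple w)\neq s'(\tuple w)$: from $Y$, pick any two assignments differing on $\tuple w$ (they agree on $\tuple v$ by constancy of $Y$ on $\tuple v$); conversely, $Y=\{s,s'\}$ witnesses the other direction. Hence the inner formula holds in $X$ precisely when $\not=\!\!(\tuple v,\tuple w)$ does. Applying $\sim$ and the satisfaction clause TS-$\sim$, the full formula holds in $X$ precisely when $\model\not\models_X\not=\!\!(\tuple v,\tuple w)$, which by the TS-fdep/TS-ndep comparison is exactly $\model\models_X=\!\!(\tuple v,\tuple w)$.

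There is no serious obstacle here; the proof is a short unwinding of the semantic clauses. The only points meriting a little care are (i) confirming that $\sim{=\!\!(\tuple w)}$ really captures non-constancy on all teams, in particular that it fails on the empty and singleton teams — immediate from TS-$\sim$ together with the observation that the empty and singleton teams satisfy $=\!\!(\tuple w)$ vacuously; and (ii) checking that the ``$\vee\top$'' construction genuinely realizes ``holds on some subteam'' despite the non-disjointness of the $\vee$-splitting — again immediate. Since the displayed formula is over the empty vocabulary and uses only $\sim$, constancy atoms, and the defined shorthand $\top$, it witnesses the definability of $=\!\!(\tuple v,\tuple w)$ in $\FO(\sim,=\!\!(\cdot))$, as required.
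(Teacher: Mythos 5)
Your formula and verification are correct, but your route is genuinely different from the paper's. Both proofs define $=\!\!(\tuple v,\tuple w)$ as the contradictory negation of a formula asserting the existence of a violating pair of assignments; the difference lies in how that existence is expressed. The paper quantifies over \emph{values}: it introduces fresh existentially quantified tuples $\tuple p, \tuple q_1, \tuple q_2$ forced to be constant, requires $\tuple q_1 \not = \tuple q_2$, and uses $\sim(\tuple v \tuple w \not = \tuple p \tuple q_i)$ to say that some assignment in the team realizes each of the two value combinations $\tuple p\tuple q_1$ and $\tuple p\tuple q_2$. You instead quantify over \emph{subteams} via the $\vee\top$ device, asking for a subteam that is constant on $\tuple v$ but not constant on $\tuple w$. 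The one point that needs care in your version — that the empty and singleton subteams satisfy $=\!\!(\tuple w)$ and hence fail $\sim =\!\!(\tuple w)$, so no separate nonemptiness guard is required — is exactly what makes the construction sound, and you check it explicitly; the passage from such a subteam to a violating pair $s,s'\in X$ and back is also handled correctly. Your defining formula is quantifier-free and arguably shorter; the paper's version makes the witnessing values explicit, which is the pattern it reuses in later sections. Either formula establishes the lemma.
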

\begin{proof}
Consider the formula 
\begin{equation}
\label{eq:eqdep}
\sim (\exists \tuple p \tuple q_1 \tuple q_2 (=\!\!(\tuple p) \wedge =\!\!(\tuple q_1) \wedge = \!\!(\tuple q_2) \wedge \tuple q_1 \not = \tuple q_2 \wedge \sim(\tuple v \tuple w \not = \tuple p \tuple q_1) \wedge \sim (\tuple v \tuple w \not = \tuple p \tuple q_2)).
\end{equation}
It is easy to check that Equation (\ref{eq:eqdep}) is logically equivalent to $=\!\!(\tuple v, \tuple w)$, as required.
\end{proof}
\begin{corollary}
$\FO(\sim, =\!\!(\cdot)) = \FO(\sim, \not = \!\!(\cdot)) = \FO(\sim, =\!\!(\cdot, \cdot)) =$ Team Logic.
\end{corollary}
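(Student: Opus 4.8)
The plan is to obtain the corollary by chaining together the two preceding lemmas with two essentially trivial inclusions. The first equality, $\FO(\sim, =\!\!(\cdot)) = \FO(\sim, \not =\!\!(\cdot))$, is already established by the first lemma above (via the logical equivalences $\not=\!\!(\tuple v) \equiv \,\sim =\!\!(\tuple v)$ and $=\!\!(\tuple v) \equiv\, \sim \not=\!\!(\tuple v)$), so nothing further is needed there.

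For the equality $\FO(\sim, =\!\!(\cdot)) = \FO(\sim, =\!\!(\cdot, \cdot))$ I would argue by mutual inclusion. The inclusion $\FO(\sim, =\!\!(\cdot)) \subseteq \FO(\sim, =\!\!(\cdot, \cdot))$ holds because every constancy atom is already a special case of a functional dependency atom: for any tuple $\tuple v$, the atom $=\!\!(\tuple v)$ coincides with $=\!\!(\langle\rangle, \tuple v)$, the functional dependency atom whose left-hand tuple is empty, since $s(\langle\rangle) = s'(\langle\rangle)$ holds vacuously and hence the condition $s(\langle\rangle) = s'(\langle\rangle) \Rightarrow s(\tuple v) = s'(\tuple v)$ just says that all assignments of $X$ agree on $\tuple v$ — this is precisely rule \textbf{TS-con}. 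Thus any $\FO(\sim, =\!\!(\cdot))$ formula is verbatim a $\FO(\sim, =\!\!(\cdot, \cdot))$ formula. The reverse inclusion $\FO(\sim, =\!\!(\cdot, \cdot)) \subseteq \FO(\sim, =\!\!(\cdot))$ follows from the second lemma: there one exhibits, for each pair of tuples $\tuple v, \tuple w$, a $\FO(\sim, =\!\!(\cdot))$ formula (Equation \ref{eq:eqdep}) logically equivalent to $=\!\!(\tuple v, \tuple w)$; since every connective, quantifier, atom and operator of these logics is interpreted compositionally, replacing each occurrence of a functional dependency atom in a given $\FO(\sim, =\!\!(\cdot, \cdot))$ formula by the corresponding defining formula produces an equivalent $\FO(\sim, =\!\!(\cdot))$ formula. (It is worth noting that these are genuine logical equivalences, holding for all models and teams, which is exactly what licenses such substitution inside arbitrary contexts.)

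Finally, $\FO(\sim, =\!\!(\cdot, \cdot))$ is, by definition, precisely Team Logic — first-order logic extended by the contradictory negation and the functional dependency atoms of all arities, in the sense of \cite{vaananen07b} — which closes the chain of equalities. The only point demanding a moment's care is the inclusion $\FO(\sim, =\!\!(\cdot)) \subseteq \FO(\sim, =\!\!(\cdot, \cdot))$: one must check that the constancy atoms arise genuinely as the empty-left-hand-side instances of functional dependence (rather than merely being interdefinable with them through some auxiliary construction, which would be unnecessarily roundabout here). Once this is observed the corollary is immediate, and there is no substantial obstacle.
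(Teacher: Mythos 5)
Your proof is correct and follows essentially the same route as the paper, which leaves the corollary implicit as an immediate consequence of the two preceding lemmas: the first equality is the first lemma, the second follows by viewing constancy as functional dependence with empty antecedent in one direction and by compositional substitution of Equation~(\ref{eq:eqdep}) (with fresh bound variables) in the other, and the last is the definition of team logic from \cite{vaananen07b}. No gaps.
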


So far so good. But what can we say about $\FO(\sim)$? In what follows, we will prove that this logic is \emph{not} more expressive than first-order logic over sentences; indeed, it is equivalent to $\FO(\nonempty, \sqcup)$.

\begin{lemma}
Let $\psi \in \FO(\nonempty)$ and let $\theta \in \FO$. Then $\sim(\psi \upharpoonright \theta)$ is logically equivalent to $((\sim \psi) \upharpoonright \theta)$. 
\end{lemma}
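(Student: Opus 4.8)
The plan is to unwind both sides into statements about the relativized team $X \upharpoonright \theta$ and to invoke the fact, recorded earlier, that relativizing by a first-order $\theta$ commutes with satisfaction. Fix a model $\model$ and a team $X$ whose domain contains all the relevant free variables. By the clause \textbf{TS-$\sim$}, $\model \models_X \sim(\psi \upharpoonright \theta)$ holds iff $\model \not\models_X \psi \upharpoonright \theta$; by the proposition on $\phi \upharpoonright \theta$ (applied with $\phi = \psi$, $\DD = \{\nonempty\}$) this is equivalent to $\model \not\models_{X \upharpoonright \theta} \psi$, which by \textbf{TS-$\sim$} again is exactly $\model \models_{X \upharpoonright \theta} \sim\psi$; and applying the $\phi \upharpoonright \theta$ proposition once more, this time with $\phi = \sim\psi$, turns this into $\model \models_X (\sim\psi) \upharpoonright \theta$. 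Since $\model$ and $X$ were arbitrary, this is the desired logical equivalence.

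The only point requiring a word of care is that the cited proposition is stated for formulas of $\FO(\DD)$, whereas $\sim\psi$ is not literally of that form. However, its proof merely inspects the outermost $\vee$ and $\wedge$ of $(\lnot\theta) \vee (\theta \wedge \phi)$ together with the Tarski-semantics behaviour of $\theta$ and $\lnot\theta$, so it goes through verbatim for an arbitrary formula $\phi$ of any team-semantics extension of first-order logic, in particular for $\phi = \sim\psi$. Alternatively one re-derives the single instance needed: $\model \models_X (\sim\psi) \upharpoonright \theta$ holds iff there are $Y,Z$ with $X = Y \cup Z$, every assignment of $Y$ satisfying $\lnot\theta$ and $\model \models_Z \theta \wedge \sim\psi$; the two conjuncts force $Z = X \upharpoonright \theta$, so this is equivalent to $\model \models_{X \upharpoonright \theta} \sim\psi$, and conversely the split $Y = X\upharpoonright(\lnot\theta)$, $Z = X \upharpoonright \theta$ is a witness.

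There is no substantial obstacle beyond this bookkeeping: the argument is a three-line chain of equivalences. The hypotheses $\psi \in \FO(\nonempty)$ and $\theta \in \FO$ enter only in ensuring that all the expressions appearing are well-formed and that $\theta$ is genuinely first-order, so that relativization behaves as stated. It is worth noting that the empty-team property is never used anywhere in the argument — which is relevant since $\FO(\nonempty)$ fails it — because the $\phi \upharpoonright \theta$ proposition holds for all teams regardless.
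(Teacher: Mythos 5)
Your proof is correct and takes essentially the same route as the paper's: both sides are reduced, via the proposition on $\phi \upharpoonright \theta$, to a statement about the team $X \upharpoonright \theta$, and then \textbf{TS-$\sim$} is unfolded. Your additional observation that the proposition must also be applied to $\sim\psi$ (which is not literally an $\FO(\DD)$-formula) is a point the paper glosses over, and your direct re-derivation of that one instance is valid.
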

\begin{proof}
Suppose that $\model \models_X \sim(\psi \upharpoonright \theta)$. Then for $Y  = \{s \in X : \model \models_s \theta\}$ we have that $\model \not \models_Y \psi$. But then $\model \models_Y \sim \psi$, and thus $\model \models_X ((\sim \psi) \upharpoonright \theta)$. 

Conversely, suppose that $\model \models_X ((\sim \psi) \upharpoonright \theta)$. Then for $Y$ as above we have that $\model \not \models_Y \psi$; and therefore, $\model \not \models_X (\psi \upharpoonright \theta)$, and in conclusion $\model \models_X \sim (\psi \upharpoonright \theta)$. 
\end{proof}
\begin{lemma}
Let $\phi \in \FO(\nonempty)$. Then $\sim \phi$ is equivalent to some formula in $\FO(\nonempty, \sqcup)$. 
\end{lemma}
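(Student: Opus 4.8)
The plan is to avoid a direct structural induction on $\phi$, since $\sim$ of a disjunction $\psi\vee\theta$ (resp.\ of $\exists v\,\psi$) quantifies universally over the decompositions $X=Y\cup Z$ (resp.\ over the choice functions $F$), and it is not at all clear how to render such a quantification in $\FO(\nonempty,\sqcup)$. Instead, I would first put $\phi$ into a normal form on which the contradictory negation acts transparently. Concretely, I claim that every $\phi\in\FO(\nonempty)$ is logically equivalent to a formula of the shape
\begin{equation*}
\psi\wedge\bigwedge_{k=1}^{n}(\nonempty\upharpoonright\chi_k),\qquad \psi,\chi_1,\dots,\chi_n\in\FO,
\end{equation*}
where $n=0$ is permitted (so purely first-order formulas are already of this shape). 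By the proposition characterising $\upharpoonright$, the conjunct $(\nonempty\upharpoonright\chi_k)$ holds in $X$ iff $X\upharpoonright\chi_k\neq\emptyset$, so such a formula asserts ``every assignment of the team satisfies $\psi$, and for each $k$ at least one assignment satisfies $\chi_k$''; and since $(\nonempty\upharpoonright\chi_k)$ abbreviates $(\lnot\chi_k)\vee(\chi_k\wedge\nonempty)$, these formulas are indeed in $\FO(\nonempty)$.

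I would prove the normal-form claim by induction on $\phi$. First-order literals are already of this shape with $n=0$, and $\nonempty$ is $\nonempty\upharpoonright\top$. Conjunction merely concatenates the two lists of $\chi$-conjuncts (and conjoins the $\psi$'s); for the universal quantifier one uses the flatness of first-order formulas together with $\forall v(\nonempty\upharpoonright\chi)\equiv\nonempty\upharpoonright\exists v\,\chi$, giving $\forall v\bigl(\psi\wedge\bigwedge_k(\nonempty\upharpoonright\chi_k)\bigr)\equiv(\forall v\,\psi)\wedge\bigwedge_k(\nonempty\upharpoonright\exists v\,\chi_k)$. The two cases that need genuine work are disjunction and the existential quantifier. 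For $\phi_1\vee\phi_2$ with $\phi_i\equiv\psi_i\wedge\bigwedge_k(\nonempty\upharpoonright\chi_{ik})$, the key observation is that whenever $\model\models_X\phi_1\vee\phi_2$ the witnessing split may be chosen as $X=(X\upharpoonright\psi_1)\cup(X\upharpoonright\psi_2)$; this is a legitimate split exactly when every assignment of $X$ satisfies the first-order formula $\psi_1\vee\psi_2$, and one then checks
\begin{equation*}
\phi_1\vee\phi_2\;\equiv\;(\psi_1\vee\psi_2)\wedge\bigwedge_k\bigl(\nonempty\upharpoonright(\psi_1\wedge\chi_{1k})\bigr)\wedge\bigwedge_k\bigl(\nonempty\upharpoonright(\psi_2\wedge\chi_{2k})\bigr),
\end{equation*}
which is again of the required shape. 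For $\exists v\,\phi$ with $\phi\equiv\psi\wedge\bigwedge_k(\nonempty\upharpoonright\chi_k)$, the point is that, because $\psi$ is first-order (hence flat), any admissible choice function must satisfy $F(s)\subseteq\{m:\model\models_{s[m/v]}\psi\}$, so the ``widest'' choice $F(s)=\{m:\model\models_{s[m/v]}\psi\}$ is optimal; it is nonempty-valued precisely when $\model\models_X\exists v\,\psi$, and testing the $\chi_k$-witnesses against it yields
\begin{equation*}
\exists v\,\phi\;\equiv\;(\exists v\,\psi)\wedge\bigwedge_k\bigl(\nonempty\upharpoonright\exists v(\psi\wedge\chi_k)\bigr).
\end{equation*}
I expect this existential case --- specifically, the verification that nothing is lost by restricting attention to that single choice function --- to be the main obstacle; the disjunction case is of the same flavour but slightly easier.

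With the normal form in hand, negation is routine. Since $\model\models_X\sim(\eta_1\wedge\eta_2)$ iff $\model\models_X(\sim\eta_1)\sqcup(\sim\eta_2)$, we obtain
\begin{equation*}
\sim\Bigl(\psi\wedge\bigwedge_{k=1}^{n}(\nonempty\upharpoonright\chi_k)\Bigr)\;\equiv\;(\sim\psi)\;\sqcup\;\bigsqcup_{k=1}^{n}\sim(\nonempty\upharpoonright\chi_k).
\end{equation*}
Now $\model\not\models_X\psi$ iff some assignment of $X$ satisfies $\lnot\psi$, i.e.\ $\sim\psi\equiv\nonempty\upharpoonright\lnot\psi$; and by the preceding lemma $\sim(\nonempty\upharpoonright\chi_k)\equiv(\sim\nonempty)\upharpoonright\chi_k\equiv\bot\upharpoonright\chi_k$, which is equivalent to $\lnot\chi_k$ (it holds in $X$ iff $X\upharpoonright\chi_k=\emptyset$, i.e.\ iff every assignment of $X$ satisfies $\lnot\chi_k$). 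Hence
\begin{equation*}
\sim\phi\;\equiv\;(\nonempty\upharpoonright\lnot\psi)\;\sqcup\;\bigsqcup_{k=1}^{n}\lnot\chi_k,
\end{equation*}
which is evidently a formula of $\FO(\nonempty,\sqcup)$, completing the argument.
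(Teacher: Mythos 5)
Your argument is correct, but it reaches the conclusion by a genuinely different route. The paper negates $\phi$ directly by structural induction, computing $\sim$ of each connective; for instance $\sim(\psi\vee\theta)$ becomes $((\sim\psi)\upharpoonright\psi^f)\sqcup((\sim\theta)\upharpoonright\theta^f)\sqcup\sim(\psi^f\vee\theta^f)$, where the flattening $\psi^f$ supplies exactly the canonical split $Y=X\upharpoonright\psi^f$ that you use. You instead first prove a normal form theorem for $\FO(\nonempty)$ itself---every formula is equivalent to $\psi\wedge\bigwedge_{k}(\nonempty\upharpoonright\chi_k)$ with $\psi,\chi_k$ first-order---after which negation is a two-line computation. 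I checked the two cases you flag as delicate: for disjunction, the canonical split $X=(X\upharpoonright\psi_1)\cup(X\upharpoonright\psi_2)$ is indeed optimal because the first-order parts are flat and the $\nonempty$-conjuncts are upward-closed; for the existential quantifier, any admissible $F$ must satisfy $F(s)\subseteq\{m:\model\models_{s[m/v]}\psi\}$ by flatness of $\psi$, and passing to the widest such $F$ can only help the upward-closed conjuncts, so your stated equivalences hold (including at the empty team). The underlying mechanism is thus the same as the paper's (flatness plus upward closure, i.e.\ Lemmas \ref{lemma:flatup} and \ref{lemma:raise}), but your normal form is a strictly stronger intermediate statement: it shows that satisfaction of an $\FO(\nonempty)$-formula is a flat condition together with finitely many single-witness conditions, which makes the closure under $\sim$, the equality $\FO(\sim)=\FO(\nonempty,\sqcup)$, and the boundedness phenomena of Section \ref{sec:total} (with $\nonempty$ being $1$-bounded) all immediate. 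The trade-off is that your normal form exploits the fact that a single assignment witnesses $\nonempty$ and so is specific to this atom, whereas the inductive clauses of the paper's computation of $\sim$ (disjunction, existential quantifier) would carry over to other upward-closed dependencies, with only the atomic base case being $\nonempty$-specific.
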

\begin{proof}
We proceed by structural induction on $\phi$. 
\begin{enumerate}
\item Suppose that $\phi$ is a first-order formula (not necessarily a literal). Then $\sim \phi$ is logically equivalent to $\nonempty \upharpoonright (\lnot \phi)$. Indeed, suppose that $\model \models_X \sim \phi$: then, since $\phi$ is first-order, there exists a $s \in X$ such that $\model \not \models_s \phi$ according to Tarski's semantics. But then $s \in X \upharpoonright (\lnot \phi)$, and thus $\model \models_X \nonempty \upharpoonright (\lnot \phi)$. 

Conversely, suppose that $\model \models_X \nonempty \upharpoonright (\lnot \phi)$. Then the set $X \upharpoonright (\lnot \phi)$ is not empty, and therefore there exists some $s \in X$ which satisfies $\lnot \phi$ according to Tarski's semantics, and finally $\model \not \models_X \phi$.
\item $\sim \nonempty$ is easily seen to be equivalent to $\bot$, which is true only in the empty team.
\item Suppose that $\phi$ is of the form $(\psi \vee \theta)$. Then $\sim \phi$ is logically equivalent to 
\begin{equation}
\label{eq:eq1}
((\sim \psi) \upharpoonright \psi^f) \sqcup ( (\sim \theta) \upharpoonright \theta^f) \sqcup \sim (\psi^f \vee \theta^f).
\end{equation}
Indeed, suppose that $\model \models_X \sim(\psi \vee \theta)$. Then it is not the case that $X = Y \cup Z$ for two $Y$, $Z$ such that $\model \models_Y \psi$ and $\model \models_Z \theta$. In particular, take $Y = X \upharpoonright \psi^f$ and $Z = X \upharpoonright \theta^f$: then $Y \cup Z \not = X$, and hence $\model \models_X \sim (\psi^f \vee \theta^f)$, or $\model \not \models_Y \psi$, and hence $\model \models_X ((\sim \psi) \upharpoonright \psi^f)$, or $\model \not \models_Z \theta$, and hence $\model \models_X ((\sim \theta) \upharpoonright \theta^f)$. 

Conversely, suppose that $\model \models_X (\psi \vee \theta)$. Then $X = Y \cup Z$ for two $Y, Z$ such that $\model \models_Y \psi$ and $\model \models_Y \theta$. Now take $Y' = X \upharpoonright \psi^f$ and $Z' = X \upharpoonright \theta^f$: by Proposition \ref{propo:flat} we have that $\model \models_Y \psi^f$ and $\model \models_Z \theta^f$, by Lemma \ref{lemma:flatup} we have that $Y \subseteq Y'$ and $Z \subseteq Z'$, and thus $X = Y' \cup Z'$, and by Lemma \ref{lemma:raise} we have that $\model \models_{Y'} \psi$ and $\model \models_{Z'} \theta$. Therefore $\model \not \models_X (\sim \psi) \upharpoonright \psi^f$, $\model \not \models_X (\sim \theta) \upharpoonright \theta^f$, and $\model \models_X (\psi^f \vee \theta^f)$, so in conclusion $X$ does not satisfy Equation (\ref{eq:eq1}).
\item Suppose that $\phi$ is of the form $(\psi \wedge \theta)$. Then $\sim \phi$ is logically equivalent to $(\sim \psi) \sqcup (\sim \theta)$. 
\item Suppose that $\phi$ is of the form $(\exists v \psi)$. Then $\sim \phi$ is logically equivalent to 
\begin{equation}
\label{eq:eq2}
\sim (\exists v \psi^f) \sqcup \forall v ( (\sim \psi) \upharpoonright \psi^f)
\end{equation}
Indeed, suppose that $\model \models_X \sim(\exists v \psi)$ and $\model \models_X \exists v \psi^f$, and consider the choice function $F$ such that $F(s) = \{m : \model \models_{s[m/v]} \psi^f\}$. $F(s)$ is nonempty for all $s \in X$, since $\model \models_X \exists v \psi^f$; and therefore, by hypothesis, $\model \not \models_{X[F/v]} \psi$. But by construction, we have that $X[F/v] = X[M/v]\upharpoonright \psi^f$, and thus $\model \not \models_{X[M/v]} \psi \upharpoonright \psi^f$, and finally  $\model \models_X \forall v ((\sim \psi) \upharpoonright \psi^f)$.

Conversely, suppose that there exists a choice function $F: X \rightarrow \parts(M) \backslash \{\emptyset\}$ such that $\model \models_{X[F/v]} \psi$. Then in particular $\model \models_{X[F/v]} \psi^f$, and hence $\model \models_X \exists v \psi^f$ and $\model \not \models_X \sim (\exists v \psi^f)$; and furthermore, we have that $X[F/v] \subseteq X[M/v] \upharpoonright \psi^f$, and therefore $\model \models_{X[M/v]} \psi \upharpoonright \psi^f$ and $\model \not \models_{X[M/v]} (\sim \psi) \upharpoonright \psi^f$. So in conclusion the team $X$ does not satisfy Equation (\ref{eq:eq2}).
\item Suppose that $\phi$ is of the form $(\forall v \psi)$. Then $\sim \phi$ is logically equivalent to $\forall v \sim \psi$: indeed, $\model \models_X \sim \phi$ iff $\model \not \models_X \forall v \psi$ iff $\model \not \models_{X[M/v]} \psi$ iff $\model \models_{X[M/v]} \sim \psi$ iff $\model \models_X \forall v \sim \psi$. 
%
\end{enumerate}
\end{proof}
We are now equipped to prove the main result of this section: 
\begin{theorem}
Let $\phi \in \FO(\nonempty, \sqcup)$. Then $\sim \phi$ is equivalent to some formula in $\FO(\nonempty, \sqcup)$. 
\end{theorem}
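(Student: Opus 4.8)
The plan is to reduce this to the preceding lemma by first pulling all occurrences of the classical disjunction to the front of $\phi$. So first I would invoke Proposition~\ref{propo:dnf} with $\DD = \{\nonempty\}$: it tells us that $\phi$ is logically equivalent to some formula of the form $\bigsqcup_{i=1}^n \psi_i$, where each $\psi_i \in \FO(\nonempty)$.

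Next I would work out how $\sim$ interacts with a classical disjunction. By the satisfaction condition \textbf{TS-$\sqcup$}, $\model \models_X \bigsqcup_{i=1}^n \psi_i$ if and only if $\model \models_X \psi_i$ for some $i$; hence $\model \models_X \sim \bigsqcup_{i=1}^n \psi_i$ if and only if $\model \not\models_X \psi_i$ for every $i$, that is, if and only if $\model \models_X \sim \psi_i$ for every $i$, i.e. if and only if $\model \models_X \bigwedge_{i=1}^n \sim \psi_i$. So $\sim \phi$ is equivalent to $\bigwedge_{i=1}^n \sim \psi_i$. Now each $\psi_i$ lies in $\FO(\nonempty)$, so the preceding lemma gives a $\chi_i \in \FO(\nonempty, \sqcup)$ equivalent to $\sim \psi_i$; therefore $\sim \phi$ is equivalent to $\bigwedge_{i=1}^n \chi_i$, which is again a formula of $\FO(\nonempty, \sqcup)$ since that logic is closed under conjunction.

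I do not expect any real obstacle here: all the genuine work has already been done in the previous lemma, whose induction on connectives and quantifiers handled the contradictory negation of honest $\FO(\nonempty)$ formulas. The only new ingredient is the near-trivial observation that $\sim$ turns the flat, classical disjunction $\sqcup$ into a conjunction of negated disjuncts — and this is usable precisely because Proposition~\ref{propo:dnf} lets us assume every $\sqcup$ sits at the outermost level.
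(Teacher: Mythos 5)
Your proposal is correct and follows essentially the same route as the paper's own proof: apply Proposition~\ref{propo:dnf} to bring $\phi$ into the form $\bigsqcup_{i=1}^n \psi_i$ with each $\psi_i \in \FO(\nonempty)$, observe that $\sim$ turns the classical disjunction into the conjunction $\bigwedge_{i=1}^n \sim\psi_i$, and then invoke the preceding lemma on each conjunct. No gaps.
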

\begin{proof}
By Proposition \ref{propo:dnf}, we may assume that $\phi$ is of the form $\bigsqcup_{i=1}^n \psi_i$, where each $\psi_i$ is in $\FO(\nonempty)$. Thus, $\sim \phi$ is logically equivalent to $\bigwedge_{i=1}^n (\sim \psi_i)$; and by the above lemma, if $\psi_i$ is in $\FO(\nonempty)$ then $\sim \psi_i$ is in $\FO(\nonempty, \sqcup)$, as required.
\end{proof}
The two following corollaries then follow at once:
\begin{corollary}
$\FO(\nonempty, \sqcup, \sim) = \FO(\nonempty, \sqcup)$.
\end{corollary}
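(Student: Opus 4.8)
\emph{Proof proposal.} The inclusion $\FO(\nonempty,\sqcup)\subseteq\FO(\nonempty,\sqcup,\sim)$ is immediate, since every formula of the former is literally a formula of the latter. For the converse inclusion it suffices to show, by structural induction on $\phi\in\FO(\nonempty,\sqcup,\sim)$, that every such $\phi$ is logically equivalent to some $\phi^{*}\in\FO(\nonempty,\sqcup)$. I would first dispose of the easy cases. If $\phi$ is a first-order literal or an atom $\nonempty$, it already lies in $\FO(\nonempty,\sqcup)$, so we may take $\phi^{*}=\phi$. If $\phi$ is $\psi\vee\theta$, $\psi\wedge\theta$, $\psi\sqcup\theta$, $\exists v\,\psi$, or $\forall v\,\psi$, then by the induction hypothesis the immediate subformulas are equivalent to formulas $\psi^{*},\theta^{*}\in\FO(\nonempty,\sqcup)$; recombining them with the same connective yields a formula of $\FO(\nonempty,\sqcup)$, and since the satisfaction clauses for $\vee,\wedge,\sqcup,\exists,\forall$ depend on the subformulas only through their satisfaction relation (so that $\equiv$ is a congruence for each of these connectives), the resulting formula is equivalent to $\phi$.

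The one case requiring the work of this section is $\phi=\sim\psi$. By the induction hypothesis, $\psi$ is equivalent to some $\psi^{*}\in\FO(\nonempty,\sqcup)$. Because the clause \textbf{TS-$\sim$} defines $\model\models_{X}\sim\chi$ purely as $\model\not\models_{X}\chi$, logically equivalent formulas have logically equivalent contradictory negations, so $\sim\psi\equiv\sim\psi^{*}$. Now apply the Theorem just proved (for $\psi^{*}\in\FO(\nonempty,\sqcup)$, the formula $\sim\psi^{*}$ is equivalent to some formula of $\FO(\nonempty,\sqcup)$) to obtain $\chi\in\FO(\nonempty,\sqcup)$ with $\sim\psi^{*}\equiv\chi$; setting $\phi^{*}=\chi$ completes this case and the induction.

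I do not expect any real obstacle here: the corollary is an elimination-of-$\sim$ argument whose substance has been relegated to the Theorem, and all that remains is the routine verification that provable equivalence is preserved by each connective of $\FO(\nonempty,\sqcup,\sim)$, in particular by $\sim$. The only point worth stating explicitly is this congruence property, which is immediate from the team-semantic clauses; once it is in place, the induction is mechanical.
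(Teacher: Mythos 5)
Your proof is correct and is exactly the argument the paper leaves implicit when it says the corollary "follows at once" from the preceding theorem: an innermost-out elimination of $\sim$ by structural induction, using that logical equivalence of formulas is a congruence for each team-semantic connective. No gaps; the congruence observation you single out is indeed the only point that needs stating.
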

\begin{corollary}
$\FO(\sim) \subseteq \FO(\nonempty, \sqcup)$.
\end{corollary}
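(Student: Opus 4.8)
The plan is to reduce the statement to the corollary immediately preceding it, namely $\FO(\nonempty, \sqcup, \sim) = \FO(\nonempty, \sqcup)$. The first step is the trivial syntactic observation that $\FO(\sim)$ is a fragment of $\FO(\nonempty, \sqcup, \sim)$: a formula of $\FO(\sim)$ is built from first-order literals using only $\vee$, $\wedge$, $\exists$, $\forall$ and $\sim$, and any such formula is already a well-formed formula of $\FO(\nonempty, \sqcup, \sim)$ (it simply never uses $\sqcup$ or the atom $\nonempty$). Hence every $\FO(\sim)$-formula is equivalent to — indeed identical to — some $\FO(\nonempty, \sqcup, \sim)$-formula, so $\FO(\sim) \subseteq \FO(\nonempty, \sqcup, \sim)$; chaining this with the preceding corollary yields $\FO(\sim) \subseteq \FO(\nonempty, \sqcup)$, which is exactly what is claimed.

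Should a self-contained argument be preferred, I would instead proceed by structural induction on $\phi \in \FO(\sim)$, constructing an equivalent $\phi' \in \FO(\nonempty, \sqcup)$. For a first-order literal take $\phi' = \phi$. If $\phi$ is $\psi \vee \theta$, $\psi \wedge \theta$, $\exists v\, \psi$ or $\forall v\, \psi$, apply the induction hypothesis to the immediate subformulas and recombine with the same connective; this is legitimate because the team-semantic clauses for these connectives refer to their subformulas only through the classes of teams satisfying them, so logical equivalence is a congruence for $\vee$, $\wedge$, $\exists$ and $\forall$. The only substantial case is $\phi = \sim\psi$: by the induction hypothesis $\psi \equiv \psi'$ for some $\psi' \in \FO(\nonempty, \sqcup)$, hence $\sim\psi \equiv \sim\psi'$, and by the Theorem just proved $\sim\psi'$ is equivalent to a formula of $\FO(\nonempty, \sqcup)$.

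I do not expect a real obstacle here: the genuinely hard step — expressing $\sim$ of a $\FO(\nonempty, \sqcup)$-formula within $\FO(\nonempty, \sqcup)$ — has already been discharged by the Theorem, and everything else is bookkeeping. The only point deserving a word of care is the congruence property used in the inductive cases, i.e. that replacing a subformula by an equivalent one preserves the satisfying teams of the whole formula; this is immediate from the compositional shape of the satisfaction clauses for $\vee$, $\wedge$, $\exists$ and $\forall$.
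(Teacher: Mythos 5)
Your proof is correct and matches the paper's intent: the paper derives this corollary (together with $\FO(\nonempty, \sqcup, \sim) = \FO(\nonempty, \sqcup)$) as an immediate consequence of the theorem that $\sim$ applied to an $\FO(\nonempty, \sqcup)$-formula stays within $\FO(\nonempty, \sqcup)$, exactly as in your two (equivalent) routes. The only point worth spelling out is the congruence property for $\vee$, $\wedge$, $\exists$, $\forall$ in the inductive elimination of $\sim$, and you have noted it.
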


We still need to show the other direction of the equivalence between $\FO(\nonempty, \sqcup)$ and $\FO(\sim)$:
\begin{proposition}
$\FO(\nonempty, \sqcup) \subseteq \FO(\sim)$.
\end{proposition}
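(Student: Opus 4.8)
The plan is to exhibit a translation of every $\FO(\nonempty,\sqcup)$-formula into an equivalent $\FO(\sim)$-formula, resting on two elementary equivalences. First, the non-emptiness atom is expressible with the contradictory negation: since $\model\models_X\bot$ holds exactly when $X=\emptyset$, we have $\model\models_X\sim\bot$ iff $X\neq\emptyset$, that is, $\nonempty\equiv\sim\bot$. Second, the classical disjunction is a De Morgan dual of the (team-semantic) conjunction under $\sim$: for any $\chi_1,\chi_2$, $\model\models_X\sim(\sim\chi_1\wedge\sim\chi_2)$ iff it is not the case that both $\model\not\models_X\chi_1$ and $\model\not\models_X\chi_2$, i.e.\ iff $\model\models_X\chi_1$ or $\model\models_X\chi_2$; hence $\chi_1\sqcup\chi_2\equiv\sim(\sim\chi_1\wedge\sim\chi_2)$, and more generally $\bigsqcup_{i=1}^n\chi_i\equiv\sim\bigl(\bigwedge_{i=1}^n\sim\chi_i\bigr)$.

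Concretely, I would first invoke Proposition \ref{propo:dnf} to bring an arbitrary $\phi\in\FO(\nonempty,\sqcup)$ into the form $\bigsqcup_{i=1}^n\psi_i$ with each $\psi_i\in\FO(\nonempty)$. Next, for each $i$ let $\psi_i'$ be the formula obtained from $\psi_i$ by replacing every occurrence of $\nonempty$ by $\sim\bot$; then $\psi_i'\in\FO(\sim)$, and a routine structural induction shows that $\model\models_X\psi_i$ iff $\model\models_X\psi_i'$ for all $\model$ and $X$. (The induction uses only that $\nonempty\equiv\sim\bot$ together with the fact that logical equivalence of a subformula is preserved by all connectives and quantifiers of our language — in particular by $\sim$, whose satisfaction condition on $X$ depends solely on whether its argument is satisfied by $X$.) Finally, applying the iterated De Morgan equivalence above, $\phi\equiv\bigsqcup_{i=1}^n\psi_i'\equiv\sim\bigl(\bigwedge_{i=1}^n\sim\psi_i'\bigr)$, and the right-hand side belongs to $\FO(\sim)$, which is exactly what is required.

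There is no real obstacle here: everything reduces to the two displayed equivalences, which are immediate from the satisfaction clauses for $\sim$, $\wedge$, and $\bot$, together with the observation — itself a trivial induction on contexts — that substituting an equivalent subformula preserves the team-semantic satisfaction relation. If one preferred to avoid Proposition \ref{propo:dnf}, the same argument goes through by a direct structural induction on $\phi$, translating $\nonempty$ to $\sim\bot$ and $\chi_1\sqcup\chi_2$ to $\sim(\sim\chi_1'\wedge\sim\chi_2')$ while leaving the first-order connectives and quantifiers untouched. Combined with the already-established inclusion $\FO(\sim)\subseteq\FO(\nonempty,\sqcup)$, this yields $\FO(\sim)=\FO(\nonempty,\sqcup)$.
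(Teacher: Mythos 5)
Your proof is correct and uses exactly the same two key equivalences as the paper, namely $\nonempty \equiv \sim\bot$ and $\chi_1 \sqcup \chi_2 \equiv \sim((\sim\chi_1)\wedge(\sim\chi_2))$; the extra detail about normal forms and substitution of equivalent subformulas is fine but not needed beyond the paper's one-line argument.
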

\begin{proof}
It suffices to show that the nonemptiness atom and the classical disjunction are definable in $\FO(\sim)$. As for the former, observe that $\model \models_X \sim \bot$ if an only if $X$ is nonempty; and for the latter, observe that $\phi \sqcup \psi$ is logically equivalent to $\sim ((\sim \phi) \wedge (\sim \psi))$.
\end{proof}

Putting everything together, we have that 
\begin{theorem}
$\FO(\sim) = \FO(\nonempty, \sqcup)$.
\end{theorem}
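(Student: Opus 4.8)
The plan is simply to put together the two inclusions that the preceding results have already delivered. For the inclusion $\FO(\sim) \subseteq \FO(\nonempty,\sqcup)$, one proceeds by induction on an $\FO(\sim)$-formula: first-order literals already lie in $\FO \subseteq \FO(\nonempty,\sqcup)$, the class $\FO(\nonempty,\sqcup)$ is closed under $\vee$, $\wedge$, $\exists$ and $\forall$ since these belong to its syntax, and the clause for $\sim$ is exactly the theorem stating that $\sim\phi$ is equivalent to an $\FO(\nonempty,\sqcup)$-formula whenever $\phi$ is. This is precisely the content of the corollary $\FO(\sim) \subseteq \FO(\nonempty,\sqcup)$.

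For the reverse inclusion $\FO(\nonempty,\sqcup) \subseteq \FO(\sim)$ — again an induction in which only the two new primitives require attention — it suffices to recall that $\nonempty$ is equivalent to $\sim\bot$, since $\model \models_X \bot$ iff $X = \emptyset$, and that $\phi \sqcup \psi$ is equivalent to $\sim((\sim\phi) \wedge (\sim\psi))$ by a De Morgan manipulation of the semantics of $\sim$. This is the proposition $\FO(\nonempty,\sqcup) \subseteq \FO(\sim)$. Combining the two inclusions gives the claimed equality $\FO(\sim) = \FO(\nonempty,\sqcup)$.

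There is therefore no real obstacle at this final step, and none in the $\supseteq$ direction either, which collapses to two one-line definability observations. The genuine work lies upstream, in the lemma asserting that $\sim\psi$ is $\FO(\nonempty,\sqcup)$-expressible for every $\psi \in \FO(\nonempty)$ — especially the disjunction and existential cases, where the contradictory negation must be driven past $\vee$ and $\exists$ by replacing the witnessing subteams $Y, Z$ (respectively, the choice function $F$) with the maximal candidates $X \upharpoonright \psi^f$ and $X \upharpoonright \theta^f$ singled out by the flattenings, invoking Lemmas \ref{lemma:flatup} and \ref{lemma:raise} to transfer satisfaction between $Y$ and $X \upharpoonright \psi^f$.
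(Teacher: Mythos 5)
Your proposal is correct and matches the paper's route exactly: the theorem is obtained by combining the corollary $\FO(\sim) \subseteq \FO(\nonempty, \sqcup)$ (which rests on the elimination of $\sim$ from $\FO(\nonempty,\sqcup)$-formulas, itself reducing via Proposition \ref{propo:dnf} to the structural-induction lemma for $\sim\psi$ with $\psi \in \FO(\nonempty)$) with the proposition $\FO(\nonempty, \sqcup) \subseteq \FO(\sim)$ given by the definitions $\nonempty \equiv \sim\bot$ and $\phi \sqcup \psi \equiv \sim((\sim\phi)\wedge(\sim\psi))$. You also correctly locate the genuine content upstream in the disjunction and existential cases of that lemma, so nothing is missing.
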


Finally, we need to prove that every sentence of $\FO(\nonempty, \sqcup)$ is equivalent to some first-order sentence. But this is immediate: 
\begin{theorem}
Let $\phi \in \FO(\nonempty, \sqcup)$ be a sentence. Then $\phi$ is logically equivalent to some $\phi' \in \FO$. 
\end{theorem}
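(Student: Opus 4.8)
The plan is to obtain this theorem as an immediate consequence of Theorem~\ref{thm:uwc} and Corollary~\ref{coro:addsqcup}, after checking that the nonemptiness atom fits into the framework of upwards-closed first-order dependencies. First I would observe that $\nonempty$ can be regarded as a nullary dependency notion: the associated class is $\{(M,R) : R \not = \emptyset\}$ over a signature with a $0$-ary relation symbol $R$, and for the empty tuple we have $X(\langle\rangle) \not = \emptyset$ if and only if $X \not = \emptyset$, which matches the satisfaction clause \textbf{TS-nonempty}. This dependency notion is first-order, being defined by the atomic sentence $R$, and it is upwards-closed, since the only $0$-ary relations are $\emptyset$ and $\{\langle\rangle\}$ and the latter is the only superset of the former that is itself such a relation.

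Next I would invoke Theorem~\ref{thm:uwc} with $\DD = \{\nonempty\}$ to conclude that $\{=\!\!(\cdot)\} \cup \{\nonempty\}$ is strongly first order, i.e.\ every sentence of $\FO(=\!\!(\cdot), \nonempty)$ is equivalent to a first-order sentence. Since $\FO(\nonempty)$ is a sublogic of $\FO(=\!\!(\cdot), \nonempty)$, the family $\{\nonempty\}$ is itself strongly first order. (Equivalently, one may simply cite the explicit list of strongly first-order dependencies recorded after Theorem~\ref{thm:uwc}, which already includes $\nonempty$.)

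Finally, I would apply Corollary~\ref{coro:addsqcup} to the strongly first-order class $\mathcal D = \{\nonempty\}$: this yields that every sentence of $\FO(\nonempty, \sqcup)$ is equivalent to some sentence of $\FO$, which is exactly the statement to be proved.

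There is no real obstacle here — the theorem is a packaging of results proved earlier — and the only point that calls for a line of care is the first one, namely verifying that the nonemptiness atom genuinely is an upwards-closed first-order dependency notion in the precise sense required by Theorem~\ref{thm:uwc}, so that the machinery of that theorem (and hence of Corollary~\ref{coro:addsqcup}) is applicable.
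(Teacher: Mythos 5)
Your proposal is correct and follows essentially the same route as the paper: the paper applies Proposition~\ref{propo:dnf} to write $\phi$ as $\bigsqcup_i \psi_i$ and then uses Theorem~\ref{thm:uwc} on each $\FO(\nonempty)$-disjunct, which is exactly the argument packaged inside Corollary~\ref{coro:addsqcup} that you invoke. Your preliminary check that $\nonempty$ is an upwards-closed first-order dependency is a reasonable extra precaution, but the paper already records $\nonempty$ in the list of dependencies covered by Theorem~\ref{thm:uwc}, so nothing new is needed there.
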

\begin{proof}
By Proposition \ref{propo:dnf} we may assume that $\phi$ is of the form $\bigsqcup_{i=1}^n \psi_i$, where each $\psi_i$ is a sentence in $\FO(\nonempty)$. But then by Theorem \ref{thm:uwc}, each $\psi_i$ is equivalent to some first-order sentence $\psi'_i$, and thus $\phi$ is equivalent to the first-order sentence $\bigvee_{i=1}^n \psi'_i$.
\end{proof}
\begin{corollary}
The constancy and inconstancy atoms are \emph{not} definable in $\FO(\nonempty, \sqcup)$.
\end{corollary}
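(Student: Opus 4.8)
The approach is to leverage the collapse theorems of this section rather than to argue about teams directly: I would show that definability of either atom in $\FO(\nonempty, \sqcup)$ would force Team Logic to collapse to first-order logic over sentences, which is absurd.

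Concretely, suppose for contradiction that the unary constancy atom $=\!\!(v)$ is definable in $\FO(\nonempty, \sqcup)$, say by a formula $\theta(v)$ over the empty vocabulary. This is enough to treat all constancy atoms at once, since $=\!\!(v_1\ldots v_k) \equiv \bigwedge_{i=1}^k =\!\!(v_i)$; so each constancy atom is definable in $\FO(\nonempty,\sqcup)$, and hence --- using the identity $\FO(\sim) = \FO(\nonempty, \sqcup)$ proved above --- in $\FO(\sim)$. Replacing (suitably renamed copies of) $\theta$ for each constancy atom occurring in an arbitrary formula of $\FO(\sim, =\!\!(\cdot))$, which is legitimate because team semantics is compositional, yields $\FO(\sim, =\!\!(\cdot)) \subseteq \FO(\sim)$, hence $\FO(\sim, =\!\!(\cdot)) = \FO(\sim) = \FO(\nonempty, \sqcup)$. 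But by the Corollary above $\FO(\sim, =\!\!(\cdot))$ is exactly Team Logic, so every Team Logic sentence would be equivalent to an $\FO(\nonempty,\sqcup)$-sentence, hence, by the preceding Theorem, to a first-order sentence. This contradicts the fact \cite{vaananen07b} that Team Logic is, over sentences, as expressive as full second-order logic, which properly exceeds first-order logic over sentences (for instance, Dedekind-finiteness of the domain is second-order but not first-order expressible). The inconstancy atom is handled in the same way: $\not=\!\!(v_1\ldots v_k)$ is definable from $\not=\!\!(v)$ (substitute the same variable everywhere), and by the same Corollary $\FO(\sim, \not=\!\!(\cdot))$ is again Team Logic, so definability of $\not=\!\!(v)$ in $\FO(\nonempty,\sqcup) = \FO(\sim)$ gives the same contradiction.

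The only steps needing care are the substitution step --- one must check that replacing a subformula by a logically equivalent one inside an $\FO(\sim)$-formula preserves equivalence, which holds because every team-semantic connective, including $\sim$, is determined by the families of teams satisfying its immediate subformulas --- and verifying that the vocabulary over which second-order logic outstrips first-order logic over sentences is covered by the cited expressiveness results, for which the empty vocabulary already suffices; I expect no serious obstacle. Finally, for the constancy atom alone one can give a more direct, self-contained argument: every formula of $\FO(\nonempty, \sqcup)$ is upward closed, since first-order literals are flat, $\nonempty$ is upward closed, and $\vee$, $\wedge$, $\exists$, $\forall$ and $\sqcup$ all preserve upward closure; but $=\!\!(v)$ holds in every singleton team and fails in a two-element team assigning $v$ two distinct values, so it is not upward closed and thus not $\FO(\nonempty,\sqcup)$-definable. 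This shortcut does not apply to $\not=\!\!(v)$, which is itself upward closed, so for it the collapse-to-Team-Logic argument seems to be the one to use.
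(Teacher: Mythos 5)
Your main argument is correct and is essentially the paper's own proof: assume definability of the (in)constancy atom in $\FO(\nonempty,\sqcup)$, conclude via substitution and the earlier corollary $\FO(\sim,=\!\!(\cdot)) = \FO(\sim,\not=\!\!(\cdot)) = $ Team Logic that Team Logic would be contained in $\FO(\nonempty,\sqcup)$, and contradict the fact that the latter collapses to $\FO$ over sentences while Team Logic is as strong as second-order logic. However, the ``more direct'' shortcut you offer for the constancy atom is wrong: flatness of first-order literals gives \emph{downward} closure, not upward closure (a singleton team may satisfy the literal $Pv$ while a superteam containing an assignment sending $v$ outside $P$ does not), so formulas of $\FO(\nonempty,\sqcup)$ are not in general upward closed and that argument does not go through. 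Drop the shortcut; your collapse argument already covers both atoms and needs nothing further.
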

\begin{proof}
If they were then we would have that $\FO(=\!\!(\cdot, \cdot), \sim) \subseteq \FO(\nonempty, \sqcup)$; but this is not possible, because $\FO(\nonempty, \sqcup)$ is strongly first-order and $\FO(=\!\!(\cdot, \cdot), \sim)$ is as strong as second-order logic.
\end{proof}
\section{Arity Hierarchies for Totality Atoms}
\label{sec:total}
In this section we will investigate the properties of the $k$-ary \emph{totality atoms} $\all_k$, and we we establish a strict \emph{arity hierarchy} for them.

Let us begin by generalizing a notion from \cite{galliani13e}: 
\begin{definition}[$\gamma$-boundedness]
\label{defin:bounded}
Let $\gamma: \mathbb N \rightarrow \mathbb N$ be a function. Then a dependency notion $\Dep$ is said to be $\gamma$-bounded if for all finite models $\model$ and teams $X$, if $\model \models_X \Dep$ then there exists a subteam $Y \subseteq X$, $|Y| \leq \gamma(|M|)$, such that $\model \models_Y \Dep$.
\end{definition}
\begin{proposition}
All $k$-ary dependencies $\Dep$ are $|M|^k$-bounded.
\end{proposition}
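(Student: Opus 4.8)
The plan is to exploit the fact that, by the definition of a Dependency Notion, whether a team satisfies the atom $\Dep\tuple v$ depends only on the relation $X(\tuple v) = \{s(\tuple v) : s \in X\}$ of realized $\tuple v$-values, and that this relation lives inside $M^{|\tuple v|} = M^k$ and hence has at most $|M|^k$ elements. So the required bound should fall out simply by discarding, from $X$, all but one witnessing assignment for each value of $\tuple v$.

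Concretely: let $\model$ be a finite model, $X$ a team over it, and suppose $\model \models_X \Dep$ — reading this, as the notation of the definition of $\gamma$-boundedness intends, as $\model \models_X \Dep\tuple v$ for the relevant $k$-tuple $\tuple v$ of variables in $\dom(X)$, i.e.\ $(M, X(\tuple v)) \in \Dep$. For each $\tuple a \in X(\tuple v)$ fix some $s_{\tuple a} \in X$ with $s_{\tuple a}(\tuple v) = \tuple a$ (a harmless choice, $M$ being finite) and set $Y = \{s_{\tuple a} : \tuple a \in X(\tuple v)\}$. Then $Y \subseteq X$ and $|Y| \le |X(\tuple v)| \le |M|^k$, since $X(\tuple v) \subseteq M^k$. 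It remains to see that $\model \models_Y \Dep$, and for this it suffices to check $Y(\tuple v) = X(\tuple v)$: the inclusion $\subseteq$ is immediate from $Y \subseteq X$, while every $\tuple a \in X(\tuple v)$ equals $s_{\tuple a}(\tuple v) \in Y(\tuple v)$, giving $\supseteq$. Hence $(M, Y(\tuple v)) = (M, X(\tuple v)) \in \Dep$, so $\model \models_Y \Dep$; since $\Dep$ and $k$ were arbitrary, every $k$-ary dependency is $|M|^k$-bounded.

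I do not expect any real obstacle here: the argument is just the observation that the semantics of $\Dep\tuple v$ factors through $X(\tuple v)$ together with the trivial cardinality bound $|X(\tuple v)| \le |M|^k$. The only points worth a moment's care are the exact equality $Y(\tuple v) = X(\tuple v)$ (one needs both inclusions, so that membership in $\Dep$ is genuinely preserved and not merely implied) and the minor notational matter of how $\model \models_X \Dep$ in the $\gamma$-boundedness definition relates to the $\model \models_X \Dep\tuple v$ of the Dependency Notion definition; in the degenerate case where $\dom(X) = \{v_1,\dots,v_k\}$ one already has $|X| \le |M|^k$ and may take $Y = X$, which is just the special case of the above in which each $s_{\tuple a}$ is uniquely determined.
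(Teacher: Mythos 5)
Your argument is correct and is essentially identical to the paper's own proof: both select one witnessing assignment $s_{\tuple a}\in X$ for each $\tuple a\in X(\tuple v)$, take $Y$ to be the set of these witnesses, and observe that $Y(\tuple v)=X(\tuple v)$ together with $|Y|\le|X(\tuple v)|\le|M|^k$ gives the bound. No gaps; the extra care you take over the two inclusions in $Y(\tuple v)=X(\tuple v)$ is implicit in the paper's version.
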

\begin{proof}
Suppose that $\model \models_X \Dep \tuple v$. Then $(M, X(\tuple v)) \in \Dep$; and since $X(\tuple v) \subseteq M^k$, it is clear that $|X(\tuple v)| \leq |M|^k$. Now for any $\tuple m \in X(\tuple v)$, let $s_{\tuple m} \in X$ be such that $s_{\tuple m}(\tuple v) = \tuple m$, and let $Y = \{s_{\tuple m} : \tuple m \in X(\tuple v)\}$. Then $|Y| \leq |M|^k$ and $Y(\tuple v) = X(\tuple v)$, and thus $\model \models_Y \Dep \tuple v$.
\end{proof}
\begin{theorem}
Let $\DD = \{\Dep_i : i \in I\}$ be a class of upwards-closed dependencies, for every $\Dep_i \in \DD$ let $\gamma_{i} : \mathbb N \rightarrow \mathbb N$ be such that $\Dep_i$ is $\gamma_{i}$-bounded, let $\phi \in \FO(\DD)$ be such that every $\Dep_i$ occurs $k_i$ times, and let $\nu_{\phi}(n) = \Sigma_{i \in I} k_i \gamma_i(n)$. Then $\phi$ is $\nu_{\phi}$-bounded, in the sense that 
\begin{equation*}
\model \models_X \phi \Rightarrow \exists Y \subseteq X, |Y| \leq \nu_{\phi}(|M|), \model \models_Y \phi
\end{equation*}
for all finite models $\model$ and all teams $X$.
\end{theorem}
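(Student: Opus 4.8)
The plan is to prove the statement by structural induction on $\phi$, propagating the bound $\nu_\phi$ through each syntactic construct. First I would fix the bookkeeping: for a subformula $\psi$ write $k_i(\psi)$ for the number of occurrences of $\Dep_i$ in $\psi$, so that $\nu_\psi(n)=\sum_{i\in I}k_i(\psi)\gamma_i(n)$; since $k_i(\psi\wedge\theta)=k_i(\psi\vee\theta)=k_i(\psi)+k_i(\theta)$ and $k_i(\exists v\,\psi)=k_i(\forall v\,\psi)=k_i(\psi)$, we get $\nu_{\psi\wedge\theta}=\nu_{\psi\vee\theta}=\nu_\psi+\nu_\theta$ and $\nu_{\exists v\,\psi}=\nu_{\forall v\,\psi}=\nu_\psi$, which is exactly what will make the cardinalities add up in the inductive steps. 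For the base cases: if $\phi$ is a first-order literal then $\nu_\phi\equiv 0$ and $Y=\emptyset$ works by \textbf{TS-lit}; if $\phi$ is a dependency atom $\Dep_i\,\tuple v$ then $\nu_\phi=\gamma_i$ and the $\gamma_i$-boundedness of $\Dep_i$ (Definition \ref{defin:bounded}) gives the required subteam at once.

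For $\phi=\psi\vee\theta$, if $\model\models_X\phi$ then $X=X_1\cup X_2$ with $\model\models_{X_1}\psi$ and $\model\models_{X_2}\theta$; applying the induction hypothesis inside $X_1$ and inside $X_2$ produces $Y_1\subseteq X_1$ and $Y_2\subseteq X_2$ of sizes at most $\nu_\psi(|M|)$ and $\nu_\theta(|M|)$, and $Y:=Y_1\cup Y_2$ then witnesses $\model\models_Y\phi$ with $|Y|\le\nu_\phi(|M|)$. For $\phi=\exists v\,\psi$, if $\model\models_{X[F/v]}\psi$ then the induction hypothesis gives $Z\subseteq X[F/v]$ with $|Z|\le\nu_\psi(|M|)$ and $\model\models_Z\psi$; for each $t\in Z$ I would pick $s_t\in X$ with $t=s_t[t(v)/v]$ and $t(v)\in F(s_t)$, then set $Y:=\{s_t:t\in Z\}$ and $F'(s):=\{t(v):t\in Z,\ s_t=s\}$, and check that $F'$ maps $Y$ into $\parts(M)\setminus\{\emptyset\}$ and that $Y[F'/v]=Z$, so that $\model\models_Y\exists v\,\psi$ with $|Y|\le|Z|\le\nu_\phi(|M|)$. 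These two cases do not use upward-closedness.

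The conjunction and universal cases are where the upward-closedness assumption is needed. For $\phi=\psi\wedge\theta$ the induction hypothesis gives $Y_\psi,Y_\theta\subseteq X$, of sizes at most $\nu_\psi(|M|)$ and $\nu_\theta(|M|)$, with $\model\models_{Y_\psi}\psi$ and $\model\models_{Y_\theta}\theta$; I would take $Y:=Y_\psi\cup Y_\theta$ and upgrade $\model\models_{Y_\psi}\psi$ to $\model\models_Y\psi$ using Lemma \ref{lemma:raise}: since $\model\models_X\psi$, Lemma \ref{lemma:flatup} gives $\model\models_X\psi^f$, hence $\model\models_Y\psi^f$ by Proposition \ref{propo:flat} as $Y\subseteq X$ and $\psi^f$ is first-order, and then $Y_\psi\subseteq Y$ together with $\model\models_{Y_\psi}\psi$ and $\model\models_Y\psi^f$ gives $\model\models_Y\psi$; symmetrically $\model\models_Y\theta$, so $\model\models_Y\phi$ with $|Y|\le\nu_\phi(|M|)$. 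For $\phi=\forall v\,\psi$, from $\model\models_{X[M/v]}\psi$ the induction hypothesis gives $Z\subseteq X[M/v]$ with $|Z|\le\nu_\psi(|M|)$ and $\model\models_Z\psi$; pulling each $t\in Z$ back to some $s_t\in X$ with $t=s_t[t(v)/v]$ and setting $Y:=\{s_t:t\in Z\}$ yields $Z\subseteq Y[M/v]\subseteq X[M/v]$, and since $\model\models_{X[M/v]}\psi$ again gives $\model\models_{Y[M/v]}\psi^f$ (Lemma \ref{lemma:flatup} and Proposition \ref{propo:flat}), Lemma \ref{lemma:raise} yields $\model\models_{Y[M/v]}\psi$, i.e. $\model\models_Y\forall v\,\psi$, with $|Y|\le|Z|\le\nu_\phi(|M|)$.

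The hard part will be precisely this amalgamation step in the $\wedge$ and $\forall$ cases: the induction delivers several small witnessing subteams (two for $\wedge$, and one slice per element of $M$ for $\forall$), and these must be merged into a single team that still satisfies the subformula. For arbitrary dependencies this fails; it goes through here only because upward-closedness lets Lemma \ref{lemma:raise} restore satisfaction of the subformula on the larger team once its flattening is known to hold. The remaining work is routine verification of the occurrence-count identities and of the set-theoretic identities $Y[F'/v]=Z$ and $Z\subseteq Y[M/v]$.
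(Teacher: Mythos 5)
Your proof is correct and follows essentially the same inductive strategy as the paper's: base cases from $\gamma_i$-boundedness, additivity of $\nu$ over $\vee$ and $\wedge$, and flattening together with Lemma \ref{lemma:raise} to amalgamate the small witnessing subteams in the $\wedge$ and $\forall$ cases. The only (harmless) divergence is in the $\exists v$ case, where you shrink the choice function to $F'$ so that $Y[F'/v]=Z$ exactly and thereby avoid upward-closedness there, whereas the paper keeps the original $F$, observes $Z \subseteq Y[F/v] \subseteq X[F/v]$, and invokes the flattening argument once more; both verifications go through.
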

\begin{proof}
The proof is by induction, and mirrors the analogous proof from \cite{galliani13e}.
\begin{enumerate}
\item If $\phi$ is a first order literal then it is $0$-bounded (since the empty team satisfies it), as required.
\item If $\phi$ is an atom $\mathbf D \tuple x$ then the statement follows at once from the definitions of boundedness.
\item Let $\phi$ be a disjunction $\psi_1 \vee \psi_2$ then $\nu_\phi = \nu_{\psi_1} + \nu_{\psi_2}$. Suppose now that $\model \models_X \psi_1 \vee \psi_2$: then $X = X_1 \cup X_2$ for two $X_1$ and $X_2$ such that $\model \models_{X_1} \psi_1$ and $\model \models_{X_2} \psi_2$. This implies that there exist $Y_1 \subseteq X_1$, $Y_2 \subseteq X_2$ such that $\model \models_{Y_1} \psi_1$ and $\model \models_{Y_2} \psi_2$, $|Y_1| \leq \nu_{\psi_1}(|M|)$ and $|Y_2| \leq \nu_{\psi_2}(|M|)$. But then $Y = Y_1 \cup Y_2$ satisfies $\psi_1 \vee \psi_2$ and has at most $\nu_{\psi_1}(|M|) + \nu_{\psi_2}(|M|)$ elements. 
\item If $\phi$ is a conjunction $\psi_1 \wedge \psi_2$ then, again, $\nu_{\phi} = \nu_{\psi_1} + \nu_{\psi_2}$. Suppose that $\model \models_X \psi_1 \wedge \psi_2$: then $\model \models_X \psi_1$ and $\model \models_X \psi_2$, and therefore by Lemma \ref{lemma:flatup} $\model \models_X \psi_1^f$ and $\model \models_X \psi_2^f$; and, by induction hypothesis, there exist $Y_1, Y_2 \subseteq X$ with $|Y_1| \leq \nu_{\psi_1}(|M|)$, $|Y_2| \leq \nu_{\psi_2}(|M|)$, $\model \models_{Y_1} \psi_1$ and $\model \models_{Y_2} \psi_2$. Now let $Y = Y_1 \cup Y_2$: since $Y \subseteq X$, by Proposition \ref{propo:flat} $\model \models_Y \psi_1^f$ and $\model \models_Y \psi_2^f$. But $Y_1, Y_2 \subseteq Y$, and therefore by Lemma \ref{lemma:raise} $\model \models_Y \psi_1$ and $\model \models_Y \psi_2$, and in conclusion $\model \models_Y \psi_1 \wedge \psi_2$.
\item If $\phi$ is of the form $\exists v \psi$ then $\nu_{\phi} = \nu_{\psi}$. Suppose that $\model \models_X \exists v \psi$: then for some $F$ we have that $\model \models_{X[F/v]} \psi$, and therefore by induction hypothesis there exists a $Z \subseteq X[F/v]$ with $|Z| \leq \nu_{\psi}(|M|)$ such that $\model \models_{Z} \psi$. For any $h \in Z$, let $\mathfrak f (h)$ be a $s \in X$ such that $h \in s[F/v] = \{s[m/v] : m \in F(s)\}$,\footnote{Since $Z \subseteq X[F/v]$, such a $s$ always exists. Of course, there may be multiple ones; in that case, we just pick arbitrarily one.} and let $Y = \{\mathfrak f(h) : h \in Z\}$. Now $Z \subseteq Y[F/v] \subseteq X[F/v]$. Since $\model \models_{X[F/v]} \psi^f$ and $Y[F/v] \subseteq X[F/v]$, we have that $\model \models_{Y[F/v]} \psi^f$; and since $\model \models_{Z} \psi$, this implies that $\model \models_{Y[F/v]} \psi$ and that $\model \models_Y \exists v \psi$. Furthermore $|Y| \leq |Z| \leq \nu_{\phi}(|M|)$, as required.
\item If $\phi$ is of the form $\forall v \psi$ then, again, $\nu_\phi = \nu_\psi$. Suppose that $\model \models_{X[M/v]} \psi$: again, by induction hypothesis there is a $Z \subseteq X[M/v]$ with $|Z| \leq \nu_\psi(|M|)$ and such that $\model \models_{Z} \psi$. For any $h \in Y$, let $\mathfrak g (h)$ pick some $s \in X$ which agrees with $h$ on all variables except possibly $v$, and let $Y = \{\mathfrak g(h) : h \in Z\}$. Similarly to the previous case, $Z \subseteq Y[M/v] \subseteq X[M/v]$: therefore, since $\model \models_{X[M/v]} \psi^f$ we have that $\model \models_{Y[M/v]} \psi^f$, and since $\model \models_{Z} \psi$ we have that $\model \models_{Y[M/v]} \psi$. So in conclusion $\model \models_Y \forall v \psi$, as required, and $|Y| \leq |Z| \leq \nu_{\phi}(M)$. 
\end{enumerate}
\end{proof}

Using some care, we can extend this result to the case of $\FO(=\!\!(\cdot), \DD, \sqcup)$: 
\begin{theorem}
Let $\DD = \{\Dep_i : i \in I\}$ be a class of upwards-closed dependencies, for every $\Dep_i \in \DD$ let $\gamma_{i} : \mathbb N \rightarrow \mathbb N$ be such that $\Dep_i$ is $\gamma_{i}$-bounded, let $\phi \in \FO(=\!\!(\cdot), \DD, \sqcup)$ be such that every $\Dep_i$ occurs $k_i$ times, and let $\nu_\phi(n) = \Sigma_{i \in I} k_i \gamma_i(n)$. Then $\phi$ is $\nu_\phi$-bounded, in the sense that 
\begin{equation*}
\model \models_X \phi \Rightarrow \exists Y \subseteq X, |Y| \leq \nu_\phi(|M|), \model \models_Y \phi.
\end{equation*}
\end{theorem}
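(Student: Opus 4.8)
The plan is to reduce this statement to the previous theorem (the $\nu_\phi$-boundedness result for $\FO(\DD)$ with upwards-closed dependencies) by eliminating both the classical disjunction $\sqcup$ and the constancy atoms $=\!\!(\cdot)$, while keeping careful track of how many occurrences of each $\Dep_i$ survive. First I would apply Proposition \ref{propo:dnf} to put $\phi$ in the form $\bigsqcup_{j=1}^m \phi_j$, where each $\phi_j \in \FO(=\!\!(\cdot), \DD)$; crucially, inspecting the proof of that proposition, pushing $\sqcup$ outwards never duplicates a $\Dep_i$-atom across \emph{both} disjuncts of a $\sqcup$ — each $\Dep_i$-occurrence ends up in exactly one $\phi_j$ — so every individual $\phi_j$ contains \emph{at most} $k_i$ occurrences of each $\Dep_i$, hence $\nu_{\phi_j}(n) \le \nu_\phi(n)$ for all $j$. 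Since $\model \models_X \phi$ means $\model \models_X \phi_j$ for some $j$, it suffices to prove the boundedness bound for that single $\phi_j$.

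Next I would use Lemma \ref{lemma:constout} to rewrite $\phi_j$ as $\exists \tuple w(=\!\!(\tuple w) \wedge \psi_j(\tuple w, \tuple v))$ where $\psi_j \in \FO(\DD)$ contains exactly the same $\Dep_i$-occurrences as $\phi_j$, in the same number. Now I am still not literally in $\FO(\DD)$ because of the leading $\exists \tuple w(=\!\!(\tuple w) \wedge \cdot)$, so the cleanest route is to handle this prefix by hand rather than invoke the previous theorem as a black box. Concretely: suppose $\model \models_X \phi_j$; then there is a choice function $F$ making $X[F/\tuple w]$ satisfy $=\!\!(\tuple w) \wedge \psi_j$, so in fact $X[F/\tuple w]$ assigns a single fixed tuple $\tuple a$ to $\tuple w$ and $\model \models_{X[\tuple a/\tuple w]} \psi_j$. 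Apply the previous theorem to $\psi_j \in \FO(\DD)$: there is $Z \subseteq X[\tuple a/\tuple w]$ with $|Z| \le \nu_{\psi_j}(|M|) = \nu_{\phi_j}(|M|) \le \nu_\phi(|M|)$ and $\model \models_Z \psi_j$. Pulling $Z$ back along the substitution $\tuple w \mapsto \tuple a$ gives a subteam $Y \subseteq X$ with $|Y| \le |Z|$, $Y[\tuple a/\tuple w] \supseteq Z$, and — using upward closure of $\psi_j$'s dependencies via Lemma \ref{lemma:raise} together with $\model \models_{X[\tuple a/\tuple w]} \psi_j^f$ (Proposition \ref{propo:flat}) restricted down to $Y[\tuple a/\tuple w]$ — one gets $\model \models_{Y[\tuple a/\tuple w]} \psi_j$, hence $\model \models_Y =\!\!(\tuple w) \wedge \psi_j[\tuple a/\tuple w]$ and finally $\model \models_Y \phi_j$, so $\model \models_Y \phi$.

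The main obstacle is the bookkeeping on $\Dep_i$-counts: I need Proposition \ref{propo:dnf}'s normal form to not blow up the per-disjunct count of any $\Dep_i$ (which is why the statement quantifies $\nu_\phi$ over the \emph{total} occurrence counts $k_i$ in the original $\phi$), and I need Lemma \ref{lemma:constout}'s promise that passing constancy atoms out is ``conservative'' on the $\Dep_i$-atoms — both facts are available, but they must be cited precisely, since a naive distribution of $\sqcup$ or a careless constancy-elimination could in principle duplicate a $\Dep_i$ and break the bound. The only other mildly delicate point is the pullback step: one must check that restricting the witnessing subteam $Z$ back through the constant substitution $\tuple w \mapsto \tuple a$ and re-expanding preserves satisfaction of $\psi_j$; this is exactly the pattern already used in the $\exists$-case of the previous theorem's proof (choosing, for each $h \in Z$, a preimage $s \in X$ with $h \in s[\tuple a/\tuple w]$, forming $Y$ from these preimages, and invoking $\model \models_{X[\tuple a/\tuple w]} \psi_j^f$ plus Lemma \ref{lemma:raise}), so it introduces nothing new.
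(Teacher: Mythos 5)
Your proposal is essentially the paper's own proof: normalize via Proposition \ref{propo:dnf} to $\bigsqcup_j \phi_j$, pull the constancy atoms out via Lemma \ref{lemma:constout}, fix the witnessing constant tuple $\tuple a$ for the satisfied disjunct, apply the preceding theorem to the $\FO(\DD)$-part, and restrict the small witnessing subteam back to $\dom(X)$ (where, since $\tuple w$ is a tuple of fresh variables, the map $s \mapsto s[\tuple a/\tuple w]$ is a bijection, so your Lemma \ref{lemma:raise} step is harmless but not actually needed: $Y[\tuple a/\tuple w]$ equals $Z$ exactly). One small correction to your bookkeeping: the distribution rule $(\psi \sqcup \theta) \vee \chi \equiv (\psi \vee \chi) \sqcup (\theta \vee \chi)$ \emph{does} copy every $\Dep_i$-occurrence of $\chi$ into both disjuncts, so it is not true that each occurrence lands in exactly one $\phi_j$; what is true, and is all you need, is that each single disjunct $\phi_j$ contains at most as many occurrences of each $\Dep_i$ as $\phi$ does, whence $\nu_{\phi_j} \leq \nu_\phi$.
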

\begin{proof}
By Proposition \ref{propo:dnf}, we can assume that $\phi$ is of the form $\bigsqcup_{i=1}^n \psi_i$, where all $\psi_i$ are in $\FO(=\!\!(\cdot), \DD)$. Furthermore, by Lemma \ref{lemma:constout} we can assume that every $\psi_i$ is of the form $\exists \tuple w_i (=\!\!(\tuple w_i) \wedge \theta_i)$, for $\theta_i \in \FO(\DD)$ and all tuples of variables $\tuple w_i$ are new. Now suppose that $\model \models_X \phi$: then there exists an $i \in 1 \ldots n$ and a tuple of elements $\tuple m \in M$ such that $\model \models_{X[\tuple m / \tuple w_i]} \theta_i$. But then there exists a $Y \subseteq X[\tuple m / \tuple w_i]$, $|Y| \leq \nu_{\theta_i}(|M|)$, such that $\model \models_Y \phi$. Now let $Z$ be the restriction of $Y$ to the domain of $X$: clearly $Z \subseteq X$ and $|Z| \leq |Y| \leq \nu_{\theta_i}(|M|) \leq \nu_{\phi}(|M|)$, and furthermore $\model \models_Z \exists \tuple w_i (=\!\!(\tuple w_i) \wedge \theta_i)$ and so in conclusion $\model \models_Z \phi$.
\end{proof}

\begin{theorem}
Let $k' > k$, and let $\DD$ be a class of $k$-ary upwards-closed (not necessarily first-order) dependencies. Then $\all_{k'}$ is not definable in $\FO(=\!\!(\cdot), \DD, \sqcup)$.
\end{theorem}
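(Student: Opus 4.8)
The plan is to derive a contradiction from the boundedness theorem just established for $\FO(=\!\!(\cdot), \DD, \sqcup)$. Suppose, for contradiction, that $\all_{k'}$ were definable in $\FO(=\!\!(\cdot), \DD, \sqcup)$ by some formula $\theta(\tuple v)$, where $\tuple v = v_1 \ldots v_{k'}$ is a tuple of $k'$ distinct variables. Since every $\Dep_i \in \DD$ is $k$-ary, the Proposition above tells us that each $\Dep_i$ is $|M|^k$-bounded; hence, writing $C$ for the total number of occurrences of $\DD$-atoms in $\theta$, we may take $\nu_\theta(n) = C n^k$, and the last theorem gives that $\theta$ is $\nu_\theta$-bounded.

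Next I would exhibit a team witnessing that $\all_{k'}$ admits no such bound. Fix a finite model $\model$ with $|M| = n$, and let $X$ be the team with domain $\{v_1, \ldots, v_{k'}\}$ consisting of exactly one assignment $s_{\tuple m}$ with $s_{\tuple m}(\tuple v) = \tuple m$ for each $\tuple m \in M^{k'}$. Then $|X| = n^{k'}$ and $X(\tuple v) = M^{k'}$, so $\model \models_X \all_{k'}(\tuple v)$ and therefore $\model \models_X \theta(\tuple v)$. By $\nu_\theta$-boundedness there is $Y \subseteq X$ with $|Y| \leq C n^k$ such that $\model \models_Y \theta(\tuple v)$, whence $\model \models_Y \all_{k'}(\tuple v)$, i.e.\ $Y(\tuple v) = M^{k'}$; but since $\tuple v$ consists of $k'$ distinct variables this forces $|Y| \geq n^{k'}$.

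Finally I would close the argument by a choice of cardinality. Because $k' > k$, we have $n^{k'}/n^k = n^{k'-k} \to \infty$, so for any $n$ with $n^{k'-k} > C$ we obtain $n^{k'} > C n^k \geq |Y| \geq n^{k'}$, a contradiction. Hence no defining formula $\theta$ exists, and $\all_{k'}$ is not definable in $\FO(=\!\!(\cdot), \DD, \sqcup)$.

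I do not expect a genuine obstacle here: the whole proof is a counting estimate riding on the previously proved boundedness theorem. The only point requiring a little care is the bookkeeping when $\theta$ also contains constancy atoms $=\!\!(\cdot)$ or classical disjunctions $\sqcup$, but this is precisely what the $\FO(=\!\!(\cdot), \DD, \sqcup)$-version of the boundedness theorem (obtained via Proposition \ref{propo:dnf} and Lemma \ref{lemma:constout}) already absorbs, so nothing beyond invoking it is needed.
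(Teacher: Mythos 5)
Your proposal is correct and follows essentially the same route as the paper's own proof: both invoke the $\nu_\phi$-boundedness theorem for $\FO(=\!\!(\cdot),\DD,\sqcup)$ with the $|M|^k$-bound on $k$-ary dependencies, apply it to the full team $X=\{\emptyset\}[M/\tuple v]$ of size $n^{k'}$, and derive a contradiction by choosing $n$ large enough that $n^{k'}>Cn^k$. The only differences are notational ($C$ versus the paper's $q$).
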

\begin{proof}
Suppose that $\phi(\tuple v) \in \FO(=\!\!(\cdot), \DD, \sqcup)$ defines $\all_{k'}$. Then, since all dependencies in $\DD$ are $|M|^k$-bounded, we have at once that $\phi$ is $q|M|^{k}$-bounded for some $q \in \mathbb N$. Now let $n \in \mathbb N$ be such that $n^{k'} > q n^k$, let $M$ be a model in the empty signature with $n$ elements, let $\tuple v$ be a tuple of $k'$ variables, and let $X = \{\emptyset\}[M/\tuple v]$. Then $M \models_X \all_{k'} \tuple v$, and therefore $M \models_X \phi(\tuple v)$. But then there must be a $Y \subseteq X$, $|Y| \leq q n^k$, such that $M \models_Y \phi(\tuple v)$; and this is not possible, because for such a $Y$ we would have that $M \not \models_Y \all_{k'} \tuple x$.
\end{proof}
In particular, it follows at once from this that $\all_{k+1}$ is not definable in $\FO(=\!\!(\cdot), \all_k, \sqcup)$. On the other hand if $k' < k$ the operator $\all_{k'} \tuple v$ is easily seen to be definable as $\forall \tuple w (\all_k \tuple v \tuple w)$; therefore 
\begin{corollary}
For all $k \in \mathbb N$, $\FO(=\!\!(\cdot), \all_k, \sqcup) \subsetneq \FO(=\!\!(\cdot), \all_{k+1}, \sqcup)$ (and all these logics are equivalent to first-order logic over sentences).
\end{corollary}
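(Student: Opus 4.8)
The plan is to obtain the corollary by combining three facts, all of them already established in the excerpt. For the inclusion $\FO(=\!\!(\cdot), \all_k, \sqcup) \subseteq \FO(=\!\!(\cdot), \all_{k+1}, \sqcup)$, I would use the remark made just before the statement: since $k < k+1$, for any $k$-tuple $\tuple v$ of variables and a fresh single variable $w$ we have $\all_k \tuple v \equiv \forall w\, \all_{k+1} \tuple v w$. Replacing each occurrence of the $k$-ary totality atom in a formula of $\FO(=\!\!(\cdot), \all_k, \sqcup)$ by this translation produces an equivalent formula of $\FO(=\!\!(\cdot), \all_{k+1}, \sqcup)$, which gives the inclusion.

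For strictness, I would appeal to the immediately preceding theorem. The atom $\all_k$ is a $k$-ary dependency notion that is upwards-closed — if $(M,R) \in \all_k$, i.e.\ $R = M^k$, and $R \subseteq S \subseteq M^k$, then $S = M^k$, so $(M,S) \in \all_k$ — and it is first-order, corresponding to the sentence $\forall \tuple x\, R\tuple x$. Hence, taking $\DD = \{\all_k\}$ and $k' = k+1 > k$, that theorem yields that $\all_{k+1}$ is not definable in $\FO(=\!\!(\cdot), \all_k, \sqcup)$. If the two logics had the same expressive power, then the formula $\all_{k+1}(v_1 \ldots v_{k+1})$, being a formula of the larger logic, would be equivalent to some formula of $\FO(=\!\!(\cdot), \all_k, \sqcup)$ over the empty vocabulary, i.e.\ $\all_{k+1}$ would be definable there, contradicting the theorem. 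Therefore the inclusion is strict.

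For the parenthetical claim, I would observe that $\{=\!\!(\cdot)\} \cup \{\all_k\}$ consists of the constancy atoms together with the upwards-closed first-order dependency $\all_k$, so by Theorem \ref{thm:uwc} it is strongly first order; then by Corollary \ref{coro:addsqcup} every sentence of $\FO(=\!\!(\cdot), \all_k, \sqcup)$ is equivalent to a first-order sentence, and since conversely every first-order sentence lies in each of these logics, all of them agree with $\FO$ over sentences. None of these steps is a genuine obstacle; the only points requiring a little care are checking that $\all_k$ really is upwards-closed and first-order, so that the earlier theorems and Theorem \ref{thm:uwc} apply, and correctly deducing strict inequality of the logics from the non-definability of the single atom $\all_{k+1}$.
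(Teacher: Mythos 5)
Your proposal is correct and follows essentially the same route as the paper: the inclusion comes from defining $\all_k \tuple v$ as $\forall w\, \all_{k+1} \tuple v w$, strictness comes from applying the preceding non-definability theorem with $\DD = \{\all_k\}$ (which, as you verify, is upwards-closed), and the parenthetical follows from Theorem \ref{thm:uwc} together with Corollary \ref{coro:addsqcup}. The only difference is that you spell out the routine verifications the paper leaves implicit.
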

\section{$0$-ary Dependencies: Escaping the Empty Team}
\label{sec:zerary}
As a limit case of the notion of dependency, we have that
\begin{definition}
A $0$-ary dependency $\mathbf D$ is a set of models over the empty signature. For all models $\model$ and teams $X$, $\model \models_X \mathbf D$ if and only if $M \in \mathbf D$. 
\end{definition}
If a $0$-ary dependency is first-order, we have that $\model \models_X \mathbf D$ if and only if $M \models \mathbf D^*$, where $\mathbf D^*$ is a sentence over the empty signature; therefore, it is natural to generalize them all to an operator $[\cdot]$ of the form
\begin{description}
\item[\textbf{TS-$[\cdot]$:}] For all first-order sentences $\phi$ in the signature of $\model$, $\model \models_X [\phi]$ if and only if $\model \models \phi$ according to the usual Tarski semantics. 
\end{description}

Whenever $X$ is nonempty it follows at once from Proposition \ref{propo:flat} that $\model \models_X [\phi]$ if and only if $\model \models_X \phi$; but since $\model \models_\emptyset \phi$ for all first-order sentences $\phi$, in first-order logic with team semantics we have no way of verifying whether a given first-order sentence is true of our model when we are considering satisfiability with respect to the empty team. Therefore, we will add this $[\cdot]$ operator to our language. It is easy to see that adding it to a strongly first-order extension of first-order logic does not break the property of being strongly first-order:
\begin{proposition}
Let $\DD$ be any family of dependencies, and let $\phi \in \FO(\DD, [\cdot])$. Then $\phi$ is logically equivalent to some sentence of the form $\bigwedge_i[\theta_i] \wedge \psi$, where $\psi\in \FO(\DD)$. 
\end{proposition}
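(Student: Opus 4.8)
The plan is to prove the statement by structural induction on $\phi$, in the slightly more convenient form: every $\phi \in \FO(\DD, [\cdot])$ is logically equivalent to a formula $[\theta] \wedge \psi$ with $\theta \in \FO$ a sentence and $\psi \in \FO(\DD)$ (taking $\theta$ to be any validity, e.g.\ $\top$ viewed as a sentence, when there are no boxes to extract). At the very end one re-expands the single box: if the accumulated sentence is $\theta = \bigwedge_i \theta_i$, then $[\bigwedge_i \theta_i] \equiv \bigwedge_i [\theta_i]$, which is immediate since each $\theta_i$ is a sentence, so $\model \models_X [\bigwedge_i \theta_i]$ iff $\model \models \theta_i$ for all $i$ iff $\model \models_X \bigwedge_i [\theta_i]$. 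The base cases are direct: a first-order literal or a dependency atom $\Dep \tuple v$ is already in $\FO(\DD)$, and $[\theta]$ is of the required shape with $\psi = \top$, since $\model \models_X [\theta] \wedge \top$ iff $\model \models_X [\theta]$.

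The engine of the argument is the observation that whether $\model \models_X [\theta]$ holds depends only on $\model$ and not on the team $X$ at all; in particular $\model \models_Y [\theta] \Leftrightarrow \model \models \theta$ for every subteam $Y$ of $X$, every $Y = X[F/v]$, and every $Y = X[M/v]$, including the empty team. Unfolding the team-semantics clause for each connective then yields the commutation laws
\begin{align*}
([\theta] \wedge \chi_1) \vee \chi_2 &\equiv [\theta] \wedge (\chi_1 \vee \chi_2), & ([\theta] \wedge \chi_1) \wedge \chi_2 &\equiv [\theta] \wedge (\chi_1 \wedge \chi_2), \\
\exists v\,([\theta] \wedge \chi) &\equiv [\theta] \wedge \exists v\, \chi, & \forall v\,([\theta] \wedge \chi) &\equiv [\theta] \wedge \forall v\, \chi,
\end{align*}
together with $[\theta_1] \wedge [\theta_2] \equiv [\theta_1 \wedge \theta_2]$. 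In the disjunction law one additionally uses commutativity of $\vee$ so as to extract a box sitting in either disjunct; in the quantifier laws one uses that $\theta$, being a sentence, has no free occurrence of $v$ and hence slides past the quantifier.

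With these laws in hand the inductive step is mechanical. For $\phi = \phi_1 \vee \phi_2$ or $\phi_1 \wedge \phi_2$, apply the induction hypothesis to obtain $\phi_j \equiv [\theta_j] \wedge \psi_j$, push both $[\theta_j]$ to the front using the commutation laws, merge them into $[\theta_1 \wedge \theta_2]$, and note that the surviving part $\psi_1 \vee \psi_2$ (resp.\ $\psi_1 \wedge \psi_2$) lies in $\FO(\DD)$. For $\phi = \exists v\, \phi_1$ or $\forall v\, \phi_1$, write $\phi_1 \equiv [\theta_1] \wedge \psi_1$ and move the box past the quantifier, leaving $\exists v\,\psi_1$ (resp.\ $\forall v\, \psi_1$) in $\FO(\DD)$. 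I do not anticipate a genuine obstacle here; the only point that warrants a second look is that the commutation laws stay valid in the degenerate cases — empty teams, and choice functions with empty image — and these cause no trouble precisely because $[\cdot]$ ignores the team entirely.
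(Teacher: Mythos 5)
Your proof is correct and follows essentially the same route as the paper: structural induction on $\phi$, driven by the observation that satisfaction of $[\theta]$ depends only on $\model$ and not on the team, which yields the commutation of $[\theta]\wedge(\cdot)$ with $\vee$, $\wedge$, $\exists$, $\forall$. Collapsing the boxes into a single $[\bigwedge_i\theta_i]$ and re-expanding at the end is only a cosmetic variation on the paper's keeping $\bigwedge_i[\theta_i]$ throughout.
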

\begin{proof}
The proof is by induction on $\phi$, and it is entirely straightforward. We report only the case of disjunction:
\begin{itemize}
\item For all first-order sentences $\theta_i$, $\theta'_j$ and all $\FO(\DD)$ formulas $\psi_1,\psi_2$ we have that $(\bigwedge_i [\theta_i] \wedge \psi_1) \vee (\bigwedge_j [\theta'_j] \wedge \psi_2)$ is logically equivalent to $\bigwedge_i [\theta_i] \wedge \bigwedge_j [\theta'_j] \wedge (\psi_1 \vee \psi_2)$. Indeed, suppose that $X = Y \cup Z$ for two $Y$, $Z$ such that $\model \models_Y \bigwedge_i [\theta_i] \wedge \psi_1$ and $\model \models_Z \bigwedge_j [\theta'_j] \wedge \psi_2$. Then $\model \models \bigwedge_i \theta_i \wedge \bigwedge_j \theta'_j$, and therefore $\model \models_X \bigwedge_i [\theta_i] \wedge \bigwedge_j [\theta'_j]$; and since $\model \models_Y \psi$ and $\model \models_Z \theta$, we also have that $\model \models_{X} \psi \vee \theta$, and so in conclusion $\model \models_X \bigwedge_i [\theta_i] \wedge \bigwedge_j [\theta'_j] \wedge (\psi_1 \vee \psi_2)$.

The other direction is similar: if $\model \models \bigwedge_i \theta_i \wedge \bigwedge_j \theta'_j$ and $\model \models_X \psi_1 \vee \psi_2$ then $X = Y \cup Z$ for two $Y$ and $Z$ such that $\model \models_Y \psi_1$ and $\model \models_Z \psi_2$. But then $\model \models_Y \bigwedge_i [\theta_i] \wedge \psi_1$ and $\model \models_Z \bigwedge_j [\theta_j] \wedge \psi_2$, and so in conclusion $\model \models_X (\bigwedge_i [\theta_i] \wedge \psi_1) \vee (\bigwedge_j [\theta'_j] \wedge \psi_2)$.
\end{itemize}
\end{proof}
Therefore we have the following result: 
\begin{proposition}
\label{propo:zerary}
Let $\DD$ be a strongly first-order class of dependencies and let $\phi \in \FO(\DD, [\cdot])$ be a sentence. Then $\phi$ is logically equivalent to some first-order sentence $\phi'$, in the sense that $\model \models_{\startteam} \phi$ if and only if $\model \models \phi'$.
\end{proposition}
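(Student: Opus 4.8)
The plan is to combine the normal-form result of the preceding Proposition with the hypothesis that $\DD$ is strongly first order. Since $\phi$ is by assumption a sentence of $\FO(\DD, [\cdot])$, the preceding Proposition rewrites it as a logically equivalent sentence of the form $\bigwedge_{i} [\theta_i] \wedge \psi$, where each $\theta_i$ is a first-order sentence over the signature of $\model$ and $\psi \in \FO(\DD)$; this conjunction is finite because $\phi$ is a finite formula, and since the whole expression is a sentence and the $[\theta_i]$ contribute no free variables, $\psi$ is itself a sentence of $\FO(\DD)$.

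Next I would invoke the strong first-orderness of $\DD$: because $\psi$ is a $\FO(\DD)$-sentence, there is a first-order sentence $\psi'$ with $\psi \equiv \psi'$. I would then set $\phi' := \bigwedge_i \theta_i \wedge \psi'$, an honest first-order sentence, and verify the equivalence on the distinguished team $\startteam = \{\emptyset\}$ by a short unwinding of the semantics. By \textbf{TS-$\wedge$}, $\model \models_{\startteam} \bigwedge_i [\theta_i] \wedge \psi$ iff $\model \models_{\startteam} [\theta_i]$ for every $i$ and $\model \models_{\startteam} \psi$; by the rule \textbf{TS-$[\cdot]$} the first conjunct says exactly $\model \models \theta_i$ in the usual Tarskian sense, and by $\psi \equiv \psi'$ together with Proposition \ref{propo:flat} the second conjunct says exactly $\model \models \psi'$. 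Collecting these, $\model \models_{\startteam} \phi$ iff $\model \models \bigwedge_i \theta_i \wedge \psi'$, i.e. iff $\model \models \phi'$, which is what is required.

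There is no genuine obstacle here beyond bookkeeping: the content is entirely carried by the two inputs (the normal-form Proposition and the definition of strong first-orderness). The only point worth a word is why the $[\cdot]$ operator — which is precisely what allows $\FO(\DD,[\cdot])$ to detect first-order truths from the empty team, and could a priori therefore add expressive power — causes no difficulty. The reason is that a sentence is only ever evaluated on $\startteam$, which is nonempty, so each $[\theta_i]$ collapses to the ordinary truth of $\theta_i$ in $\model$, faithfully recorded by the first-order conjunct $\bigwedge_i \theta_i$.
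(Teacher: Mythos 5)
Your proof is correct and follows exactly the paper's own route: apply the preceding normal-form proposition to write $\phi$ as $\bigwedge_i [\theta_i] \wedge \psi$, replace the $\FO(\DD)$-sentence $\psi$ by an equivalent first-order $\psi'$ using strong first-orderness, and use the nonemptiness of $\startteam$ to collapse each $[\theta_i]$ to $\theta_i$. The only difference is that you spell out the semantic verification (and correctly write $\psi'$ rather than $\psi$ in the final sentence, fixing a small typo in the paper's version).
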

\begin{proof}
We may assume that $\phi$ is on the form $\bigwedge_i [\theta_i] \wedge \psi$, where $\psi$ is a $\FO(\DD)$-sentence. Now since $\DD$ is strongly first-order, $\psi$ is equivalent to some first-order $\psi'$; and since $\startteam$ is nonempty, we can take $\phi' = \bigwedge_i \theta_i \wedge \psi$. 
\end{proof}
\section{Unary Dependencies}
\label{sec:unary}
We will now consider the case of \emph{unary} dependencies, that is, of dependence atoms of arity one. As we will see, \emph{all} first-order unary dependencies are strongly first-order and definable in $\FO(=\!\!(\cdot), [\cdot], \all_1, \sqcup)$.

In order to prove this we will make use of the following standard result:
\begin{lemma}
Let $\phi$ be a first-order sentence over the vocabulary $\{P\}$, where $P$ is unary. Then $\phi$ is logically equivalent to a Boolean combination of sentences of the form $\exists^{=k} x P x$ and $\exists^{=k} x \lnot P x$.
\end{lemma}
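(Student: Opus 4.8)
The statement to be proved is a classical fact about monadic first-order logic: every sentence $\phi$ over a single unary predicate $P$ is equivalent to a Boolean combination of the counting sentences $\exists^{=k} x\, P x$ and $\exists^{=k} x\, \lnot P x$.

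\medskip

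The plan is to argue via Ehrenfeucht--Fra\"iss\'e games (equivalently, quantifier rank). First I would fix $q$ to be the quantifier rank of $\phi$. The key observation is that, over the vocabulary $\{P\}$, a model $\model$ is determined up to $q$-equivalence (i.e.\ indistinguishability by sentences of quantifier rank $\leq q$) by the pair of numbers $(a, b)$, where $a = \min(q, |P^{\model}|)$ counts the elements satisfying $P$ truncated at $q$, and $b = \min(q, |M \setminus P^{\model}|)$ counts those satisfying $\lnot P$ truncated at $q$. Concretely: if $\model_1$ and $\model_2$ are two such structures with the same truncated counts $(a,b)$, then Duplicator wins the $q$-round EF game on them, because she can always respond to a pebble on a $P$-element (resp.\ $\lnot P$-element) with a fresh $P$-element (resp.\ $\lnot P$-element), and the truncation at $q$ guarantees that there are enough fresh elements of each kind to last $q$ rounds while respecting equalities already played. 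Hence $\model_1 \equiv_q \model_2$, so in particular $\model_1 \models \phi \iff \model_2 \models \phi$.

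\medskip

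From this I would extract the normal form as follows. For each pair $(a,b)$ with $0 \le a \le q$ and $0 \le b \le q$, the class of models with truncated counts exactly $(a,b)$ is defined by the sentence
\begin{equation*}
\chi_{a,b} \;=\; \alpha_a \wedge \beta_b,
\end{equation*}
where $\alpha_a$ is $\exists^{=a} x\, Px$ if $a < q$ and $\exists^{\geq q} x\, Px$ if $a = q$, and similarly $\beta_b$ is $\exists^{=b} x\, \lnot Px$ if $b < q$ and $\exists^{\geq q} x\, \lnot Px$ if $b = q$; and $\exists^{\geq q} x\, \psi$ is itself the Boolean combination $\bigvee_{k \ge q}\exists^{=k}x\,\psi$ expressible already as $\lnot\bigvee_{k<q}\exists^{=k}x\,\psi$, which is a finite Boolean combination of the allowed atoms. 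Since the $\chi_{a,b}$ are mutually exclusive and jointly exhaustive, and since (by the previous paragraph) $\phi$ cannot distinguish two models lying in the same class, $\phi$ is logically equivalent to $\bigvee \{\chi_{a,b} : \chi_{a,b} \wedge \phi \text{ is satisfiable}\}$, a finite disjunction of conjunctions of sentences each of which is a Boolean combination of $\exists^{=k}x\,Px$ and $\exists^{=k}x\,\lnot Px$. This is the required Boolean combination.

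\medskip

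The main obstacle, such as it is, is the EF-game argument in the second paragraph: one must check carefully that when the truncated counts agree, Duplicator has enough ``spare'' elements of each colour to survive $q$ rounds. The point is that in $q$ rounds at most $q$ pebbles are placed, so at most $q$ elements of a given colour are ever needed simultaneously; truncating the counts at $q$ records exactly enough information to guarantee this, while also correctly distinguishing the genuinely small cases ($|P^{\model}| < q$) from the large ones. Everything else is bookkeeping, and I would simply cite this as a standard result (the monadic case of the EF method, cf.\ any standard reference on finite model theory) rather than spell it out in full, since it is not specific to team semantics.
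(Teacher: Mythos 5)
The paper does not prove this lemma at all---it is explicitly cited as a ``standard result''---and your Ehrenfeucht--Fra\"iss\'e argument is precisely the standard proof one would give, with the threshold $q$ and the extraction of the normal form from the equivalence classes $\chi_{a,b}$ both handled correctly. So the proposal is correct and consistent with what the paper intends; no further comparison is needed.
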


Therefore, in order to show that all unary dependencies are in $\FO(=\!\!(\cdot), [\cdot], \all_1, \sqcup)$ it suffices to show that the following four dependencies are in it: 
\begin{description}
\item[\textbf{TS-eq-pos}:] For all $k \in \mathbb N$, $\model \models_X |v| = k$ iff $|X(v)| = k$; 
\item[\textbf{TS-neq-pos}:] For all $k \in \mathbb N$, $\model \models_X |v| \not = k$ iff $|X(v)| \not = k$; 
\item[\textbf{TS-eq-neg}:] For all $k \in \mathbb N$, $\model \models_X |M - v| = k$ iff $|M \backslash X(v)| = k$; 
\item[\textbf{TS-neq-neg}:] For all $k \in \mathbb N$, $\model \models_X |M- v| \not = k$ iff $|M \backslash X(v)| \not = k$.
\end{description}

Let us prove that this is the case. 
\begin{lemma}
The nonemptiness atom $\nonempty$ is definable in $\FO(\all_1)$ as $\forall q \all_1 q$.
\end{lemma}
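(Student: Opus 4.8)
The plan is to unfold the team-semantic clauses on both sides of the claimed equivalence and check that they coincide. First I would evaluate the right-hand side: by \textbf{TS-$\forall$}, $\model \models_X \forall q\, \all_1 q$ holds if and only if $\model \models_{X[M/q]} \all_1 q$, and by \textbf{TS-all} (for a singleton tuple) this holds if and only if $(X[M/q])(q) = M^1 = M$.

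Next I would compute $(X[M/q])(q)$ directly from the definitions. Since $X[M/q] = \{s[m/q] : s \in X,\ m \in M\}$, we get $(X[M/q])(q) = \{s[m/q](q) : s \in X,\ m \in M\}$, which is $M$ whenever $X$ is nonempty (take any fixed $s \in X$ and let $m$ range over $M$) and is $\emptyset$ when $X = \emptyset$ (since then $X[M/q] = \emptyset$). Hence $(X[M/q])(q) = M$ is equivalent to $X \neq \emptyset$, which by \textbf{TS-nonempty} is exactly the satisfaction condition of $\nonempty$; this establishes $\model \models_X \forall q\, \all_1 q \Leftrightarrow \model \models_X \nonempty$ for every model $\model$ and every team $X$ over it, which is the statement.

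This argument is essentially a routine unwinding of the semantics, so I do not expect a genuine obstacle; the only point that needs attention is that the domain $M$ of any first-order model is nonempty, which is what guarantees $(X[\emptyset])(q) = \emptyset \neq M$ and hence that the empty team fails $\forall q\, \all_1 q$. Without that convention the equivalence would break over an empty domain, so I would make sure to invoke it explicitly.
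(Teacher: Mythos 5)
Your proposal is correct and follows essentially the same route as the paper: unfold \textbf{TS-$\forall$} and \textbf{TS-all}, observe that $(X[M/q])(q) = M$ exactly when $X \neq \emptyset$ and $(X[M/q])(q) = \emptyset$ when $X = \emptyset$. Your explicit remark that the nonemptiness of the domain $M$ is what makes the empty-team case fail is a small but worthwhile addition that the paper leaves implicit.
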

\begin{proof}
Suppose that $\model \models_X \nonempty$, that is, $X \not = \emptyset$, and let $s \in X$. Then for all $m \in M$, $s[m/q] \in X[M/v]$, and thus $X[M/q](q) = M$, and thus $\model \models_X \forall q \all_1 q$ as required. 

However, if $X = \emptyset$ we have that $X[M/q] = \emptyset$ too, and thus $X[M/q](q) = \emptyset \not = M$, and finally $\model \not \models_X \forall q \all_1 q$.
\end{proof}
\begin{definition}
For all $k \in \mathbb N$ and all variables $v$, we define the following formulas: 
\begin{align*}
&\phi_{\leq k}(v) = \exists p_1 \ldots p_k (\bigwedge_{i} =\!\!(p_i) \wedge \bigvee_{i=1}^k v = p_i);\\
&\phi_{\geq k}(v) = \exists p_1 \ldots p_k (\bigwedge_i =\!\!(p_i) \wedge \bigwedge_{i \not = j} p_i \not = p_j \wedge \bigwedge_i (\nonempty \upharpoonright v = p_i));\\
&\psi_{\leq k}(v) = [\exists^{\leq k} x (x = x)] \sqcup \exists p_1 \ldots p_k(\bigwedge_i =\!\!(p_i) \wedge \exists q (\all_1(q) \wedge (\bigvee_i q = p_i \vee q = v);\\
&\psi_{\geq k}(v) = (\bot \wedge [\exists^{\geq k} x (x = x)]) \sqcup (\nonempty \wedge \exists p_1 \ldots p_k (\bigwedge_i =\!\!(p_1) \wedge \bigwedge_{i \not = j} p_i \not = p_j \wedge \bigwedge_{i=1}^k v \not = p_i))
\end{align*}
\end{definition}
\begin{proposition}
For all $k \in \mathbb N$, all variables $v$, all models $\model$ and all nonempty teams $X$ whose domain contains $v$, 
\begin{itemize}
\item $\model \models_X \phi_{\leq k}(v)$ if and only if $|X(v)| \leq k$; 
\item $\model \models_X \phi_{\geq k}(v)$ if and only if $|X(v)| \geq k$; 
\item $\model \models_X \psi_{\leq k}(v)$ if and only if $|M \backslash X(v)| \leq k$; 
\item $\model \models_X \psi_{\geq k}(v)$ if and only if $|M \backslash X(v)| \geq k$.
\end{itemize}
\end{proposition}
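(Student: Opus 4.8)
The plan is to treat the four claims separately. In each forward direction I will read off the stated cardinality bound from the meaning of the constancy atoms (and, where it occurs, from the choice function hidden inside $\exists q$), and in each backward direction I will build explicit witnesses. Throughout I use the following facts, all immediate from the semantic clauses or from earlier results in the excerpt: a constancy atom $=\!\!(p_i)$ forces $p_i$ to take one fixed value $a_i$ over the whole team; for a first-order literal $q=t$ we have $\model\models_Y q=t$ iff $s(q)=s(t)$ for every $s\in Y$; $\model\models_Y\bigvee_j\chi_j$ iff $Y$ can be written as $\bigcup_j Y_j$ with $\model\models_{Y_j}\chi_j$; $\model\models_X\psi\upharpoonright\theta$ iff $\model\models_{X\upharpoonright\theta}\psi$; and $\model\models_X[\sigma]$ iff $\model\models\sigma$ in Tarski's sense, so in particular $\model\models_X[\exists^{\leq k}x(x=x)]$ iff $|M|\leq k$.

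For $\phi_{\leq k}(v)$: in the forward direction, take the values $a_1,\ldots,a_k$ witnessing the constancy atoms; the disjunction $\bigvee_i v=p_i$ then forces $X(v)\subseteq\{a_1,\ldots,a_k\}$, so $|X(v)|\leq k$. Conversely, if $X(v)=\{a_1,\ldots,a_m\}$ with $1\leq m\leq k$ (nonempty since $X$ is), set $a_{m+1}=\cdots=a_k:=a_1$, give each $p_i$ the constant value $a_i$ via singleton choice functions, and cover the resulting team by $Y_j=\{s:s(v)=a_j\}$. For $\phi_{\geq k}(v)$: in the forward direction the witnesses $a_1,\ldots,a_k$ are pairwise distinct, and each conjunct $\nonempty\upharpoonright(v=p_i)$ forces $X\upharpoonright(v=p_i)\neq\emptyset$, i.e.\ $a_i\in X(v)$, whence $|X(v)|\geq k$; conversely just pick $k$ distinct elements of $X(v)$ as the values of the $p_i$.

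The case of $\psi_{\leq k}(v)$ is the one requiring the most care, and I expect it to be the main obstacle. In the forward direction, if the left disjunct of the $\sqcup$ holds then $|M|\leq k$ and a fortiori $|M\backslash X(v)|\leq k$; if the right disjunct holds, let $a_1,\ldots,a_k$ witness the constancy atoms, let $X'=X[a_1/p_1]\cdots[a_k/p_k]$, and let $Y=X'[F/q]$ be the team witnessing $\exists q$, so that $\all_1(q)$ gives $Y(q)=M$. Since each $t\in Y$ has $t(v)\in X(v)$ and the disjunction $\bigvee_i q=p_i\vee q=v$ forces $t(q)$ to be some $a_i$ or to equal $t(v)$, every value of $q$ in $Y$ lies in $\{a_1,\ldots,a_k\}\cup X(v)$; as $Y(q)=M$ this yields $M=\{a_1,\ldots,a_k\}\cup X(v)$ and hence $|M\backslash X(v)|\leq k$. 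Conversely, if $|M|\leq k$ use the left disjunct; otherwise $|M\backslash X(v)|\leq k$, so list $M\backslash X(v)$ and pad it (using any element of $M$) to a sequence $a_1,\ldots,a_k$, set $p_i:=a_i$, fix some $s_\ast\in X'$, and put $F(s_\ast)=\{s_\ast(v),a_1,\ldots,a_k\}$ and $F(s)=\{s(v)\}$ for $s\neq s_\ast$; then $Y(q)=X(v)\cup\{a_1,\ldots,a_k\}=M$, and $Y$ is covered by sending each $s_\ast[a_i/q]$ to the $i$-th disjunct and every remaining assignment to the last disjunct $q=v$.

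Finally, for $\psi_{\geq k}(v)$ the left disjunct $\bot\wedge[\exists^{\geq k}x(x=x)]$ is never satisfied by the nonempty team $X$ (as $\model\models_X\bot$ fails), so only the right disjunct $\nonempty\wedge\exists p_1\ldots p_k(\ldots)$ is relevant; there $\nonempty$ holds automatically, the witnesses $a_1,\ldots,a_k$ are pairwise distinct, and $\bigwedge_i v\neq p_i$ forces $a_i\notin X(v)$ for every $i$, so $\{a_1,\ldots,a_k\}\subseteq M\backslash X(v)$ and $|M\backslash X(v)|\geq k$; conversely one picks $k$ distinct elements of $M\backslash X(v)$ as the values of the $p_i$. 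The remaining points to keep straight are purely bookkeeping: the padding needed when the relevant set has fewer than $k$ elements (harmless, since $X$ and hence $X(v)$ is nonempty, and since in the $\psi_{\leq k}$ converse we have already reduced to $|M|>k$), and the fact that the lax choice functions used for $\exists$ may legitimately return multi-element sets, which is exactly what makes the $\all_1(q)$ witness in $\psi_{\leq k}$ available.
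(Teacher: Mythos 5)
Your proof is correct and follows essentially the same route as the paper's: read off the bounds from the constancy witnesses in the forward directions, and construct explicit witnesses (with padding by repetition) in the backward directions, including the same key observation that $\all_1(q)$ together with the disjunction $\bigvee_i q=p_i \vee q=v$ forces $M\subseteq\{a_1,\ldots,a_k\}\cup X(v)$. The only (immaterial) difference is your choice function for the converse of $\psi_{\leq k}$, which concentrates the padding values on a single assignment $s_\ast$ rather than adding $\{a_1,\ldots,a_k\}$ to $F(s)$ for every $s$ as the paper does; both witnesses work.
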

\begin{proof}
~ 
\begin{itemize}
\item Suppose that $\model \models_X \phi_{\leq k}(v)$ and $X$ is nonempty: then there exist elements $m_1 \ldots m_k$ such that for $Y = X[m_1 \ldots m_k / p_1 \ldots p_k]$, $\model \models_Y \bigvee_{i=1}^k v = p_i$. But then $X(v) \subseteq \{m_1 \ldots m_k\}$, and thus $|X(v)| \leq k$. If instead $X$ is empty then trivially $|X(v)| = 0 \leq k$.

Conversely, suppose that $X(v) = \{m_1, \ldots m_{k'}\}$ for $k' \leq k$. Then for\\ $Y = X[m_1 \ldots m_{k'} \ldots m_{k'} / p_1 \ldots p_k]$ we have that $\model \models_Y \bigvee_{i=1}^k v = p_i$. Thus $\model \models_X \phi_{\leq k}(v)$, as required.
\item Suppose that $\model \models_X \phi_{\geq k}(v)$. Then there exist distinct elements $m_1 \ldots m_k$ such that for $Y = X[m_1 \ldots m_k/p_1 \ldots p_k]$ and for all $i \in 1 \ldots k$, $\model \models_Y \nonempty \upharpoonright v = p_i$. Thus for all such $i$ there exists a $s \in Y$ with $s(v) = s(p_i) = m_i$, and thus $|X(v)| = |Y(v)| \geq k$. 

Conversely, suppose that $\{m_1 \ldots m_k\} \subseteq X(v)$, where all $m_i$ are distinct. Now take $Y = X[m_1 \ldots m_k / p_1 \ldots p_k]$: clearly $\model \models_Y \bigwedge_i =\!\!(p_i)\wedge \bigwedge_{i \not = j} p_i \not = p_j$, and it remains to show that for all $i$ $\model \models_Y \nonempty \upharpoonright v = p_i$. But $Y \upharpoonright (v = p_i) = \{s \in Y : s(v) = s(p_i) = m_i\}$ is nonempty by hypothesis, and this concludes the proof.
\item Suppose that $\model \models_X \psi_{\leq k} (v)$.  If $\model \models_X [\exists^{\leq k} x (x = x)]$ we have that $|M| \leq k$, from which it follows at once that $|M \backslash X(v)| \leq |M| \leq k$. Otherwise, we can find elements $m_1 \ldots m_k$ such that, for $Y = X[m_1 \ldots m_k / p_1 \ldots p_k]$, there exists a choice function $F$ for which $\model \models_{Y[F/q]} \all_1(q) \wedge (\bigvee_i q = p_i \vee q = v)$. Then $M \backslash X(v)$ must be contained in $\{m_1 \ldots m_k\}$, since $q$ takes all possible values and $s(q) \not \in \{m_1 \ldots m_k\} \Rightarrow s(q) = s(v)$.

Conversely, suppose that $M \backslash X(v) \subseteq \{m_1 \ldots m_{k}\}$. If $X \not = \emptyset$, let $Y$ be\\$X[m_1 \ldots m_{k} / p_1 \ldots p_k]$, and for all $s \in Y$ let $F(s) = \{m_1 \ldots m_{k}\} \cup \{s(v)\}$. Then $Y[F/q] \models \all_1 q$: indeed, if $m \in \{m_1 \ldots m_{k}\}$ then $m \in F(s)$ for all $s \in Y$, and otherwise $m = s(x)$ for some $s \in Y$ (and hence $m \in F(s)$ for this choice of $s$). Furthermore, for all $h \in Y[F/q]$, if $h(q) \not \in \{m_1 \ldots m_k\}$ then we have that $h(q) = h(v)$, as required. If instead $X = \emptyset$ then $|M| = |M \backslash X(v)| \leq k$, and hence $\model \models_X \exists^{\leq k} x (x = x)$. 
\item Suppose that $\model \models_X \psi_{\geq k}(v)$ and $X \not = \emptyset$. Then there exist distinct elements $m_1 \ldots m_k$ such that for $Y = X[m_1 \ldots m_k / p_1 \ldots p_k]$, $\model \models_Y \bigwedge_{i=1}^k v \not = p_i$. Therefore $\{m_1 \ldots m_k\} \in M \backslash X$, and thus $|M \backslash X| \geq k$. If instead $X = \emptyset$ then $\model \models \bot \wedge [\exists^{\geq k} x (x = x)]$ and hence $|M| = |M \backslash X(v)| \geq k$ as required. 

Conversely, suppose that $|M \backslash X(v)| \geq k$. If $X$ is nonempty we can choose elements $m_1 \ldots m_k \in M \backslash X(v)$ and verify that $\model \models_{X[m_1 \ldots m_k / p_1 \ldots p_k]} \bigwedge_{i \not = j} p_i \not = p_j \wedge \bigwedge_{i} v \not = p_i$; and if $X$ is empty then it follows at once that $|M| \geq k$ and hence that $\model \models_X \bot \wedge [\exists^{\geq k} x (x = x)]$, as required.
\end{itemize}
\end{proof}
\begin{corollary}
For all $k \in \mathbb N$, the atoms $|v| = k$, $|v| \not = k$, $|M - v| = k$ and $|M - v| \not = k$ are all definable in $\FO(=\!\!(\cdot), \all_1, \sqcup)$. 
\end{corollary}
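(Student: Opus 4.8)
The idea is that the corollary is essentially a repackaging of the preceding Proposition: for each $k$ the four target atoms can be built from the formulas $\phi_{\leq k}(v)$, $\phi_{\geq k}(v)$, $\psi_{\leq k}(v)$, $\psi_{\geq k}(v)$ using only $\wedge$ and the classical disjunction $\sqcup$, and those formulas already live in $\FO(=\!\!(\cdot),[\cdot],\all_1,\sqcup)$ (expanding $\nonempty$ to the abbreviation $\forall q\,\all_1 q$ supplied by the lemma above). So the plan is simply to write down the right combinations and then check the two corner cases — the degenerate index $k=0$, where $\phi_{\leq k-1}$ and $\psi_{\leq k-1}$ are undefined, and the empty team — that the Proposition, being stated only for nonempty teams, does not hand us for free. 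Explicitly, I would take $\phi_{\leq k}(v)\wedge\phi_{\geq k}(v)$ for the atom $|v|=k$, and $\phi_{\leq k-1}(v)\sqcup\phi_{\geq k+1}(v)$ for $|v|\neq k$ when $k\geq 1$, with $\phi_{\geq 1}(v)$ for $|v|\neq 0$; and identically $\psi_{\leq k}(v)\wedge\psi_{\geq k}(v)$ for $|M-v|=k$, and $\psi_{\leq k-1}(v)\sqcup\psi_{\geq k+1}(v)$ (resp. $\psi_{\geq 1}(v)$ when $k=0$) for $|M-v|\neq k$.

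On a nonempty team $X$ whose domain contains $v$, correctness is immediate from the Proposition together with the truth conditions for $\wedge$ and $\sqcup$: $\phi_{\leq k}(v)\wedge\phi_{\geq k}(v)$ holds iff $|X(v)|\leq k$ and $|X(v)|\geq k$, i.e. iff $|X(v)|=k$; $\phi_{\leq k-1}(v)\sqcup\phi_{\geq k+1}(v)$ holds iff $|X(v)|\leq k-1$ or $|X(v)|\geq k+1$, i.e. iff $|X(v)|\neq k$, while for $k=0$ the conditions $v\in\dom(X)$ and $X\neq\emptyset$ already force $|X(v)|\geq 1$, so asserting $|X(v)|\geq 1$ suffices; and the two ``$M-v$'' statements are the same argument verbatim with $|M\backslash X(v)|$ in place of $|X(v)|$.

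The one step that genuinely needs its own argument is the empty team, and this is where I expect whatever slight difficulty there is to sit, since the Proposition is silent about it. On $X=\emptyset$ we have $|\emptyset(v)|=0$ and $|M\backslash\emptyset(v)|=|M|$. Now $\phi_{\leq k}$, being a $\FO(=\!\!(\cdot))$-formula, is satisfied by $\emptyset$ for every $k$; $\phi_{\geq k}$ is satisfied by $\emptyset$ exactly when $k=0$, because for $k\geq 1$ one of its conjuncts, $\nonempty\upharpoonright(v=p_i)$, requires a nonempty subteam; and in $\psi_{\leq k}$, $\psi_{\geq k}$ every $\all_1$-conjunct fails on $\emptyset$ (since $\emptyset(q)=\emptyset\neq M$), so on $\emptyset$ the only way $\psi_{\leq k}$ can hold is through its first disjunct $[\exists^{\leq k}x(x=x)]$, and the only way $\psi_{\geq k}$ can hold is through its first disjunct $\bot\wedge[\exists^{\geq k}x(x=x)]$, which hold respectively iff $|M|\leq k$ and iff $|M|\geq k$. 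Feeding these observations into the chosen combinations, on $\emptyset$ we get: $\phi_{\leq k}(v)\wedge\phi_{\geq k}(v)$ holds iff $k=0$, i.e. iff $|\emptyset(v)|=k$; $\phi_{\leq k-1}(v)\sqcup\phi_{\geq k+1}(v)$ (resp. $\phi_{\geq 1}(v)$) holds iff $k\neq 0$, i.e. iff $|\emptyset(v)|\neq k$; $\psi_{\leq k}(v)\wedge\psi_{\geq k}(v)$ holds iff $|M|=k=|M\backslash\emptyset(v)|$; and $\psi_{\leq k-1}(v)\sqcup\psi_{\geq k+1}(v)$ (resp. $\psi_{\geq 1}(v)$) holds iff $|M|\neq k$ — exactly what is wanted. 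Past this bookkeeping there is no substantial obstacle, since the real ideas — using $\all_1$ to range over all of $M$, and using $[\cdot]$ to read off $|M|$ from the empty team — have already been exhausted in the Proposition.
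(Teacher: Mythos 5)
Your proposal is correct and takes essentially the same route as the paper: the same four combinations $\phi_{\leq k}\wedge\phi_{\geq k}$, $\phi_{\leq k-1}\sqcup\phi_{\geq k+1}$, $\psi_{\leq k}\wedge\psi_{\geq k}$, $\psi_{\leq k-1}\sqcup\psi_{\geq k+1}$, read off from the preceding Proposition. Your explicit check of the empty team and your treatment of $k=0$ (dropping the $\leq k-1$ disjunct rather than adopting the paper's convention $\phi_{\leq -1}=\psi_{\leq -1}=\bot$) is in fact the more careful choice, since $\bot$ is satisfied by the empty team and would make $\bot\sqcup\phi_{\geq 1}$ hold there even though $|v|\neq 0$ should fail on it.
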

\begin{proof}
Observe that 
\begin{itemize}
\item $\model \models_X |v| = k$ iff $\model \models_X \phi_{\leq k} (v) \wedge \phi_{\geq k} \phi$; 
\item $\model \models_X |v| \not = k$ iff $\model \models_X \phi_{\leq k-1}(v) \sqcup \phi_{\geq k+1}(v)$; 
\item $\model \models_X |M - v| = k$ iff $\model \models_X \psi_{\leq k}(v) \wedge \psi_{\geq k}(v)$; 
\item $\model \models_X |M - v| \not = k$ iff $\model \models_X \psi_{\leq k-1}(v) \sqcup \psi_{\geq k+1}(v)$
\end{itemize}
where we let $\phi_{\leq -1} = \psi_{\leq -1} = \bot$.
\end{proof}
Putting everything together, we have that 
\begin{theorem}
Every unary first-order dependency is definable in $\FO(=\!\!(\cdot), \all_1, \sqcup)$.
\end{theorem}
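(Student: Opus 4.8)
The plan is to strip away the Boolean structure of an arbitrary unary first-order dependency and reduce the whole problem to the four counting atoms $|v| = k$, $|v| \neq k$, $|M - v| = k$, $|M - v| \neq k$, whose definability in $\FO(=\!\!(\cdot), \all_1, \sqcup)$ is exactly the content of the preceding corollary.

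First I would fix a unary first-order dependency $\Dep$ together with a defining sentence $\Dep^*$ over the signature $\{R\}$, with $R$ unary, so that $(M, R) \in \Dep$ if and only if $(M, R) \models \Dep^*$. Applying the standard lemma on first-order sentences over a single unary predicate, I would rewrite $\Dep^*$ as a Boolean combination of sentences $\exists^{=k} x\, Rx$ and $\exists^{=k} x\, \lnot Rx$, and then put this Boolean combination into disjunctive normal form $\bigvee_{i} \bigwedge_{j} \lambda_{ij}$, so that every literal $\lambda_{ij}$ is one of $\exists^{=k} x\, Rx$, $\lnot \exists^{=k} x\, Rx$, $\exists^{=k} x\, \lnot Rx$, $\lnot \exists^{=k} x\, \lnot Rx$ for a suitable $k$.

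Next I would set up the dictionary between these literals and the counting atoms. For any model $\model$ and any team $X$ with $v \in \dom(X)$, interpreting $R$ as $X(v) \subseteq M$ we have $(M, X(v)) \models \exists^{=k} x\, Rx$ iff $|X(v)| = k$ and $(M, X(v)) \models \exists^{=k} x\, \lnot Rx$ iff $|M \setminus X(v)| = k$; so the four literal types translate respectively to $|v| = k$, $|v| \neq k$, $|M - v| = k$, $|M - v| \neq k$, each of which lies in $\FO(=\!\!(\cdot), \all_1, \sqcup)$ by the corollary and holds at $X$ exactly when the corresponding literal holds of $(M, X(v))$. Writing $\lambda^{*}$ for the translation of $\lambda$, I would then take $\theta(v) = \bigsqcup_{i} \bigwedge_{j} \lambda_{ij}^{*}$. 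Unwinding the clauses for team-semantics conjunction and for the classical disjunction $\sqcup$ (recall $\model \models_X \psi \sqcup \chi$ iff $\model \models_X \psi$ or $\model \models_X \chi$) shows $\model \models_X \theta(v)$ iff some conjunct $\bigwedge_j \lambda_{ij}^{*}$ holds at $X$ iff $(M, X(v)) \models \bigwedge_j \lambda_{ij}$ for some $i$ iff $(M, X(v)) \models \Dep^*$ iff $\model \models_X \Dep\, v$ --- which is precisely what definability requires.

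The one point needing a little care, and the closest thing here to an obstacle, is the empty team: the proposition underpinning the corollary checks the auxiliary formulas $\phi_{\leq k}, \phi_{\geq k}, \psi_{\leq k}, \psi_{\geq k}$ only for nonempty $X$, so one must verify separately that when $X = \emptyset$ --- where $X(v) = \emptyset$, $|X(v)| = 0$, and $|M \setminus X(v)| = |M|$ --- the counting atoms still match the cardinality conditions they are meant to express. This is exactly what the $[\exists^{\le k} x (x = x)]$-style disjuncts and the $\bot$-guards in the definitions of $\psi_{\le k}$ and $\psi_{\ge k}$ take care of, and the check is entirely routine. Past this corner case the argument is pure bookkeeping; no new idea is needed beyond the unary-sentence lemma and the corollary.
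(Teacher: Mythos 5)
Your proposal is correct and follows essentially the same route as the paper: apply the standard lemma to write $\Dep^*$ as a Boolean combination of $\exists^{=k}x\,Rx$ and $\exists^{=k}x\,\lnot Rx$, put it in disjunctive normal form, and translate the literals into the counting atoms $|v|=k$, $|v|\neq k$, $|M-v|=k$, $|M-v|\neq k$, using $\sqcup$ for the outer disjunction. Your explicit attention to the empty-team corner case is a welcome (and valid) refinement that the paper's own proof passes over silently, but it does not change the substance of the argument.
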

\begin{proof}
Let $\Dep$ be a unary first-order dependency and let $v$ be a first-order variable. By definition, $\model \models_X \Dep v$ if and only if $(M, X(v)) \models \Dep^*(P)$, where $\Dep^*(P)$ is a first-order formula in the vocabulary $\{P\}$ ($P$ unary). But then $\Dep^*(P)$ is equivalent to a Boolean combination of sentences of the form $\exists^{=k} x Px$ and $\exists^{=k} x \lnot Pk$; and thus, we may assume that $\Dep^*(P)$ is of the form $\bigvee_i \bigwedge_j \theta_{ij}$, where each $\theta_{ij}$ is $\exists^{=k} x Px$, $\exists^{=k} x \lnot Px$, or a negation of a formula of this kind. But then $\Dep v$ is logically equivalent to 
\begin{equation*}
\bigsqcup_i \bigwedge_j \theta'_{ij}, 
\end{equation*}
where 
\begin{itemize}
\item If $\theta_{ij}$ is $\exists^{=k} x Px$, $\theta'_{ij}$ is $|v| = k$; 
\item If $\theta_{ij}$ is $\lnot \exists^{=k} x Px$, $\theta'_{ij}$ is $|v| \not = k$; 
\item If $\theta_{ij}$ is $\exists^{=k} x \lnot Px$, $\theta'_{ij}$ is $|M - v| = k$; 
\item If $\theta_{ij}$ is $\lnot \exists^{=k} x \lnot Px$, $\theta'_{ij}$ is $|M - v| \not = k$.
\end{itemize}
\end{proof}
Finally, we need to show that every sentence of $\FO(=\!\!(\cdot), [\cdot], \all_1, \sqcup)$ is equivalent to some first-order sentence. But this is straightforward: 
\begin{theorem}
Let $\phi \in \FO(=\!\!(\cdot), \all_1, \sqcup, [\cdot])$ be a sentence. Then $\phi$ is logically equivalent to some first-order sentence.
\end{theorem}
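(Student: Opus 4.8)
The plan is to reduce, one ingredient at a time, to results already in the excerpt. The key preliminary observation is that the unary totality atom $\all_1$ is itself \emph{first-order} — it corresponds to the $\{R\}$-sentence $\forall x\, R x$, with $R$ unary — and \emph{upwards-closed}, since $R = M$ together with $R \subseteq S \subseteq M$ forces $S = M$. Hence, applying Theorem \ref{thm:uwc} with $\DD = \{\all_1\}$, the class $\{=\!\!(\cdot)\} \cup \{\all_1\}$ is strongly first order. This is the only genuinely ``logical'' input specific to $\all_1$; everything else is handled by the general machinery of Sections \ref{sec:neg} and \ref{sec:zerary}.

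The first step is to move the classical disjunction to the front: by (the proof of) Proposition \ref{propo:dnf}, $\phi$ is equivalent to a formula of the form $\bigsqcup_{i=1}^n \psi_i$ with each $\psi_i \in \FO(=\!\!(\cdot), \all_1, [\cdot])$. Strictly speaking Proposition \ref{propo:dnf} is phrased for $\FO(\DD, \sqcup)$ and says nothing about $[\cdot]$; but $[\theta]$ is only ever applied to genuine first-order sentences $\theta$, so a $\sqcup$ never occurs underneath a $[\cdot]$, and for the purpose of commuting $\sqcup$ past $\vee, \wedge, \exists, \forall$ the expression $[\theta]$ may simply be treated as an atom. Since those commutations do not change the set of free variables, each $\psi_i$ is again a sentence.

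The second step disposes of the $[\cdot]$ operator. Because $\{=\!\!(\cdot)\} \cup \{\all_1\}$ is strongly first order, Proposition \ref{propo:zerary} applies to each sentence $\psi_i \in \FO(=\!\!(\cdot), \all_1, [\cdot])$ and yields a first-order sentence $\psi_i'$ with $\model \models_{\startteam} \psi_i \Leftrightarrow \model \models \psi_i'$ for every $\model$. Hence $\phi$ is equivalent to $\bigsqcup_{i=1}^n \psi_i'$; and since $\model \models_{\startteam} \bigsqcup_i \psi_i'$ holds exactly when $\model \models \psi_i'$ for some $i$ (directly from the meaning of $\sqcup$, equivalently by iterating the lemma that $\sqcup$ and $\vee$ agree on first-order sentences), we conclude that $\phi$ is logically equivalent to the first-order sentence $\bigvee_{i=1}^n \psi_i'$.

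I do not expect a real obstacle here — this is essentially the proof of Corollary \ref{coro:addsqcup} carried out with $[\cdot]$ added to the language, and every nontrivial component is already available. The one point that deserves care is the bookkeeping in the first step: one must check that $[\cdot]$ can indeed be absorbed into the normal-form argument for $\sqcup$ and that the resulting $\sqcup$-disjuncts remain sentences, so that Proposition \ref{propo:zerary} may legitimately be invoked on each of them.
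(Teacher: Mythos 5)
Your proof is correct and follows essentially the same route as the paper's: Proposition \ref{propo:dnf} to pull the classical disjunctions to the front, Theorem \ref{thm:uwc} applied to the upwards-closed first-order dependency $\all_1$ together with the constancy atoms, and Proposition \ref{propo:zerary} to absorb the $[\cdot]$ operator, finally replacing $\bigsqcup_i$ by $\bigvee_i$ at the sentence level. Your explicit checks that $\sqcup$ commutes past $[\cdot]$ (treated as an atom) and that $\{=\!\!(\cdot),\all_1\}$ is strongly first order before invoking Proposition \ref{propo:zerary} are exactly the bookkeeping the paper leaves implicit.
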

\begin{proof}
By Proposition \ref{propo:dnf}, $\phi$ is equivalent to some sentence of the form $\sqcup_i \psi_i$, for $\psi_i \in \FO(=\!\!(\cdot), \all_1, [\cdot])$. Observe further that all expressions $[\theta]$ which occur in our formulas are such that $\theta$ is a first-order sentence over the empty vocabulary; and therefore, these expressions are trivially upwards-closed first-order dependencies, since for any fixed model they either hold in all teams or in none of them.\footnote{On the other hand, if $\theta$ were a first-order sentence over the non-empty vocabulary then it would not be a dependency.} Then by Theorem \ref{thm:uwc} and Proposition \ref{propo:zerary} every such sentence is equivalent to some first-order sentence $\psi'_i$ and thus $\phi$ is equivalent to $\bigvee_i \psi'_i$.
\end{proof}
Putting everything together, we have that 
\begin{corollary}
Let $\Dep$ be a unary first-order dependency. Then it is strongly first-order and definable in $\FO(=\!\!(\cdot), [\cdot], \all_1, \sqcup)$.
\end{corollary}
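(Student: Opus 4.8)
The plan is to assemble the two theorems immediately preceding the corollary. Fix a unary first-order dependency $\Dep$. By the theorem asserting that every unary first-order dependency is definable in $\FO(=\!\!(\cdot), \all_1, \sqcup)$, there is a formula $\theta_{\Dep}(v) \in \FO(=\!\!(\cdot), \all_1, \sqcup)$ such that $\model \models_X \Dep v$ if and only if $\model \models_X \theta_{\Dep}(v)$, for all $\model$ and all teams $X$ whose domain contains $v$. Since $\FO(=\!\!(\cdot), \all_1, \sqcup)$ is a sublogic of $\FO(=\!\!(\cdot), [\cdot], \all_1, \sqcup)$, this already yields the definability half of the statement.

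For strong first-orderness, I would take an arbitrary sentence $\phi \in \FO(\Dep)$ and rewrite it into an equivalent sentence of $\FO(=\!\!(\cdot), \all_1, \sqcup)$ by replacing, one occurrence at a time, each atom $\Dep w$ occurring in $\phi$ by a suitably variable-renamed copy of $\theta_{\Dep}(w)$ — renaming the bound variables of $\theta_{\Dep}$ apart from those of $\phi$ so as to avoid capture. Because the defining property of $\theta_{\Dep}$ holds uniformly in all models and all teams, a straightforward induction on formula structure (using the compositionality of the team-semantics clauses, each of which refers to its immediate subformulas only through their satisfaction relation) shows that every such replacement preserves satisfaction with respect to every team. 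Hence the resulting sentence $\phi^{*} \in \FO(=\!\!(\cdot), \all_1, \sqcup) \subseteq \FO(=\!\!(\cdot), \all_1, \sqcup, [\cdot])$ is logically equivalent to $\phi$. Applying the theorem that every sentence of $\FO(=\!\!(\cdot), \all_1, \sqcup, [\cdot])$ is equivalent to a first-order sentence, we conclude that $\phi$ — and therefore every sentence of $\FO(\Dep)$ — is equivalent to a first-order sentence, which is exactly the claim that $\Dep$ is strongly first order.

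There is essentially no obstacle here: all the substantive work has been done in the two preceding theorems and, behind them, in the $\gamma$-boundedness arguments supporting Theorem~\ref{thm:uwc}. The only point requiring a little care — the closest thing to a ``hard part'' — is the substitution lemma for swapping an atom for an equivalent $\FO(\DD)$-formula inside a larger formula, but this is the standard compositionality of team semantics and the induction is routine. It is worth remarking that the $[\cdot]$ operator is not actually needed for this particular corollary, since $\theta_{\Dep}$ already lives in $\FO(=\!\!(\cdot), \all_1, \sqcup)$; it is carried along only to keep the ambient logic fixed and to match the statement.
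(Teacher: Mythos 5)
Your proposal is correct and follows essentially the same route as the paper, which leaves this corollary as an immediate consequence (``putting everything together'') of the two preceding theorems: definability of $\Dep$ in $\FO(=\!\!(\cdot), \all_1, \sqcup)$, followed by uniform substitution of the defining formula and the elimination theorem for sentences of $\FO(=\!\!(\cdot), \all_1, \sqcup, [\cdot])$. Your explicit attention to capture-avoiding renaming and to the fact that the defining equivalence holds for all teams whose domain contains $v$ (which is what makes the substitution sound) only makes the implicit argument more careful, and your observation that $[\cdot]$ is not actually needed for the definability half is accurate.
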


We conclude this section by mentioning an open problem.
\runinhead{Question:} Let $k > 1$. Are there any strongly first-order $k$-ary dependencies which are not definable in $\FO(=\!\!(\cdot), [\cdot], \all_k, \sqcup)$? 
\section{Conclusion}
Much of the team semantics research has so far focused on formalisms which are greatly more expressive than first-order logic. However, the study of weaker extensions of first-order logic, which do not rise above it insofar as the definability of classes of models is concerned, promises to be also of significant value: not only this investigation offers an opportunity of examining the nature of the boundary between first- and second-order logic, but it also provides us with (comparatively) computationally ``safe'' classes of dependencies and operators to use in applications.

This work builds on the results of \cite{galliani13e} and can only be an initial attempt of making sense of the wealth of these ``weak'' extensions of first-order logic with team semantics. Much of course remains to be done; but a few distinctive characteristics of this line of investigation may be gleaned already.
\begin{itemize}
\item The totality atoms $\all_k$ seem to have a role of particular relevance in the theory of strongly first-order dependencies. It remains to be seen whether this role will be preserved by the further developments of the theory; but in any case, the fact that these atoms are the ``maximally unbounded'' (in the sense of Definition \ref{defin:bounded}) ones for their arities is certainly suggestive, as is the existence of a strict definability hierarchy based on their arities and the fact that all monadic first-order dependencies are definable in terms of the $\all_1$ atom.
\item The logic $\FO(\sim) = \FO(\nonempty, \sqcup)$, as the simplest extension of first-order logic with team semantics which is closed under contradictory negation, is also an item of particular interest. As we saw, it suffices to add to it comparatively harmless dependencies such as constancy atoms to obtain the full expressive power of second-order logic; thus, despite its simplicity, this logics appears to be a natural ``stopping point'' in the family of dependency-based extensions of first-order logic, deserving of a more in-depth study of its properties.
\item When working with classes of strongly first-order dependencies, different choices of connectives and operators emerge to the foreground. In particular, the role of the classical disjunction $\phi \sqcup \psi$ in the study of dependence logic and its extensions has been relatively marginal so far; but nonetheless, this connective proved itself of fundamental importance for many of the results of this work. More in general, it appears now that a fully satisfactory account of dependencies and definability cannot be developed if not by integrating it with a general theory of \emph{operators} and \emph{uniform definability} in team semantics. The work of \cite{engstrom12,engstrom12b,kuusisto13} on generalized quantifiers in team semantics seems to be the most natural starting point for such an enterprise; in particular, it would be worthwhile to be able to characterize general families of dependencies \emph{and operators} which do not increase the expressive power of first-order logic (wrt sentences). 
\end{itemize}
\begin{acknowledgement}
This research was supported by the Deutsche Forschungsgemeinschaft (project number DI 561/6-1).
\end{acknowledgement}
\bibliographystyle{spmpsci}
\bibliography{biblio}
\end{document}